\numberwithin{equation}{section} 
\renewcommand{\theequation}{\mbox{\arabic{section}.\arabic{equation}}}
\newtheorem{lemma}{Lemma}
\newtheorem{corollary}{Corollary}
\newtheorem{proposition}{Proposition}
\newtheorem{theorem}{Theorem}
\newtheorem{remark}{Remark}
\begin{document}
\title{Stratospheric planetary flows \\
from the perspective of the Euler equation on a rotating sphere}
\author{A. Constantin and P. Germain}

\address{Faculty of Mathematics, University of Vienna, Oskar-Morgenstern-Platz 1, 1090 Vienna, Austria}

\email{adrian.constantin@univie.ac.at}%

\address{Courant Institute of Mathematical Sciences, New York University,
251 Mercer Street, New York, NY 10012, USA}

\email{pgermain@cims.nyu.edu}

\maketitle

\maketitle

\begin{abstract}
This article is devoted to stationary solutions of Euler's equation on a rotating sphere, and to their relevance to 
the dynamics of stratospheric flows in the atmosphere of the outer planets of our solar system and in polar regions 
of the Earth. For the Euler equation, under appropriate conditions, rigidity results are established, ensuring that the solutions are either zonal or rotated 
zonal solutions. A natural analogue of Arnold's stability criterion is proved. In both cases, the lowest mode Rossby-Haurwitz stationary solutions (more precisely, 
those whose stream functions belong to the sum of the first two eigenspaces of the Laplace-Beltrami operator) appear as limiting cases. We study the stability properties of 
these critical stationary solutions. Results on the 
local and global bifurcation of non-zonal stationary solutions from classical Rossby-Haurwitz waves are also obtained. Finally, we show that stationary solutions of the 
Euler equation on a rotating sphere are building blocks for travelling-wave solutions of the 3D system that 
describes the leading order dynamics of stratospheric planetary flows, capturing the characteristic decrease of density and increase of temperature with height in 
this region of the atmosphere.
\end{abstract}

\smallskip
\noindent  {\footnotesize \textsc{Keywords}: inviscid flow, rotating spherical coordinates}.

\noindent {\footnotesize \textsc{AMS Subject Classifications (2020)}: 86A10, 76B47, 35Q35.}

\section{Introduction}

\subsection{Euler's equation on a rotating sphere}

The Euler equation set on the 2-sphere $\mathbb{S}^2$, with standard metric, in a frame rotating at speed $\omega \in \mathbb{R}$ about the polar axis, 
can be written in terms of the stream function $\psi$ as
\begin{equation}\label{Eomega}
\partial_t \,\Delta \psi + \frac{1}{\cos\theta}\, \left[ -\partial_\theta \psi \,\partial_\varphi + \partial_\varphi \psi \,\partial_\theta \right] \left(\Delta \psi + 2\omega\sin\theta\right) = 0\,.\tag{${\mathcal E}_\omega$}
\end{equation}
(see for instance Section 13.4.1 in~\cite{holton}). Here $(\theta,\varphi) \in (-\frac{\pi}{2},\frac{\pi}{2}) \times [0,2\pi)$ are the latitude and longitude angles (see Figure \ref{fig1}), and $\Delta$ is the Laplace-Beltrami operator, see Section~\ref{sectionstructure} for a more thorough presentation of the differential geometry of the sphere.

\begin{SCfigure}
\includegraphics[width=9cm]{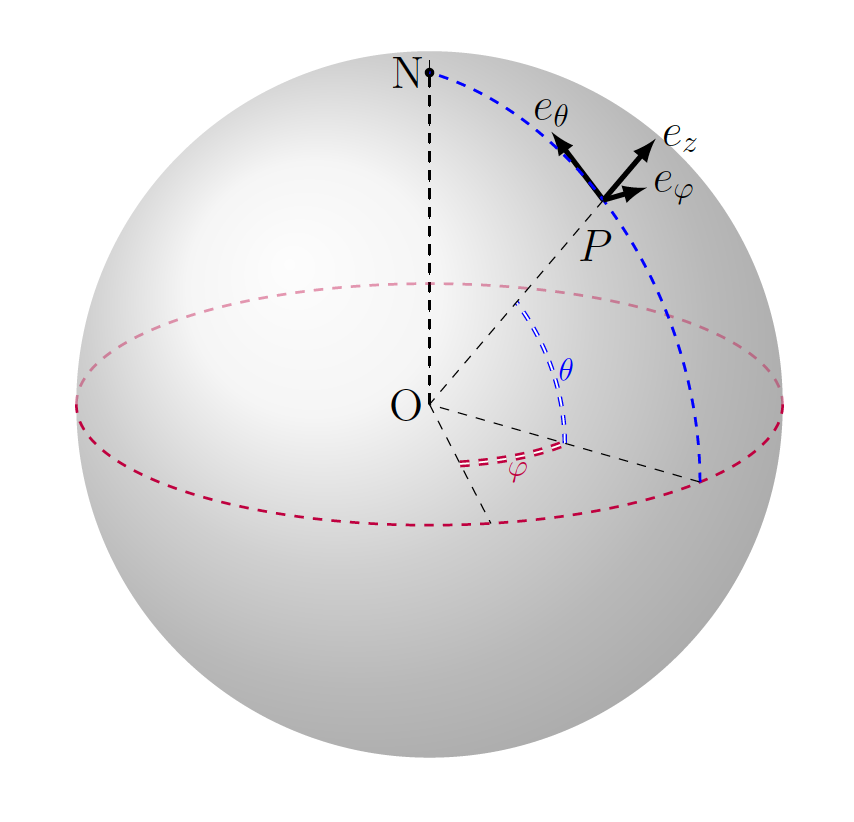}
\caption{\footnotesize{The rotating spherical coordinate system $(r',\varphi,\theta)$: $\theta \in [-\frac{\pi}{2},\,\frac{\pi}{2}]$ is
the angle of longitude, $\varphi \in [0,\pi]$ is the angle of latitude, and $r'=|OP|$ is the distance
from the origin at planet's center. The North Pole is at $\theta=\frac{\pi}{2}$, the Equator is on $\theta=0$ and the 
South Pole is at $\theta=-\frac{\pi}{2}$.}}
\label{fig1}
\end{SCfigure}

Conservation laws and energy estimates for equation \eqref{Eomega} are very similar to the more classical framework of the (two-dimensional) Euclidean space, or of the torus. Therefore, one can use energy methods to prove local well-posedness in $H^s$, $s>2$ (see for instance~\cite{MB}), and then an analogue of the Beale-Kato-Majda theorem~\cite{BKM} ensures global well-posedness in $H^s$, $s>2$. For these matters we refer to Taylor~\cite{Taylor}, where this program is explained in greater detail. Global existence being settled, we now turn to more qualitative questions.

A recent line of research has focused on the behavior of $(E_\omega)$ as $\omega \to \infty$, proving convergence to zonal flows after time averaging, see Cheng-Mahalov~\cite{ChMa}, Wirosoetisno~\cite{Wiro}, Taylor~\cite{Taylor}. 

In the present paper, the focus will be on stationary solution and their stability. Stationary solutions of \eqref{Eomega} can be found by solving the semilinear elliptic problem
\begin{equation}
\label{ellipticpsi}
\Delta \psi = F(\psi) - 2\omega \sin \theta\,,
\end{equation}
where $F$ is a smooth function; throughout regions without critical points of the stream function, any stationary solution comes about in this way (see \cite{cj1}). 
Two fundamental explicit classes of stationary solutions of \eqref{ellipticpsi} are zonal flows and Rossby-Haurwitz planetary waves:
\begin{itemize}
\item \textit{Zonal flows} correspond to stream functions which only depend on the polar angle $\theta$, $\psi = \psi(\theta)$. Their stability has been investigated in Caprino-Marchioro~\cite{CaMa} and Taylor~\cite{Taylor}. Notice that one can use the invariance of the equation through the action of $\mathbb{O}(3)$ to obtain non-zonal solutions from zonal flows. In particular, this explains the apparently 
{\it ad hoc} Ansatz made in~\cite{ahm}.
\item \textit{Rossby-Haurwitz solutions of degree $k$} are given by the stream function 
$$
\psi_0 = \alpha \sin \theta + Y(\varphi,\theta),
$$
where $Y$ belongs to the $k$-th eigenspace $\mathbb{E}_k$ of the Laplace-Beltrami operator and $\alpha \in \mathbb{R}$,  solving~\eqref{ellipticpsi} for 
$$
F(\psi) = - k(k+1) \psi \quad\text{and} \quad \omega = \alpha \left( 1 - \frac{k(k+1)}{2} \right)\,.
$$
These are the classical Rossby-Haurwitz planetary waves, due to Craig \cite{craig}, who found the complete 
nonlinear solution corresponding to the solutions of  the linearized barotropic vorticity equation obtained by Rossby \cite{rossby} and 
Haurwitz \cite{hau} on the beta-plane and on the sphere, respectively (see the discussion in \cite{verk}). 

The degree $1$ modes are either zonal (of the form $\beta \sin \theta$) or rotations of 
this zonal solution. These solutions can be thought-of as ground states and will play a key role in this article, since they are distinguished in many respects:
\begin{itemize}
\item They are minimizers of the Dirichlet energy for fixed $L^2$ norm. In more hydrodynamical terms, they minimize the enstrophy $\int_{{\mathbb S}^2} |\Delta \psi|^2\,{\rm d}\sigma$ 
for fixed kinetic energy $\int_{{\mathbb S}^2} |U|^2\,{\rm d}\sigma$, $U$ being the associated velocity field and ${\rm d}\sigma=\cos\theta\,{\rm d}\theta{\rm d}\varphi$ being the surface element on the sphere. This gives a first proof of its stability.
\item The $L^2$-projection of any solution $\psi$ onto the subspace of ground states is conserved by the flow of~\eqref{Eomega}. This is assertion $(iii)$ in Proposition~\ref{conserved}, giving a second proof of the 
stability of this ground state.
\item This solution is isochronal, in other words its Lagrangian flow is periodic.
\end{itemize}

The next-gravest modes have degree 2, and they comprise waves with a more intricate latitude variation. These solutions are of considerable interest in meteorology. For example, the wave obtained by 
setting $Y$ proportional to the spherical harmonics $Y_2^1$ is commonly observed in the terrestrial atmosphere, being known as the $5$-day wave since it travels westwards with a period of about 
5 days (see the field data in \cite{he}). These waves are also preponderant in the atmospheres of the outer planets of our solar system (Jupiter, Saturn, Uranus, Neptune); see \cite{dow}. 
The instability of the Rossby-Haurwitz waves is a key factor in the lack of predictability of the weather in long-term forecasts \cite{ben}. Earlier attempts to study the stability of the degree 2 waves 
by numerical means are somewhat inconclusive, the results being partly contradicting (see the discussion in \cite{ben}). Since the linear stability of the zonal mode-two solutions was proved by Taylor \cite{Taylor}, the 
issue of the nonlinear stability or instability of the mode-two Rossby-Haurwitz waves, settled by Theorem~\ref{thmRH}, is of outmost importance.
\end{itemize}

\subsection{Stratospheric planetary flows}

The dynamics of the planetary atmospheres in our solar system is intertwined with its thermal structure, with temperature gradients 
driving specific atmospheric flows which, in turn, modify the temperature field. To a large extent, Earth's weather is conditioned by the redistribution of the excess
solar insolation received by the tropical regions towards the poles, whereas sunlight is not the main driver of the atmospheric motions of 
Jupiter, Saturn and Neptune, all these planets radiating about twice as much energy as they receive from the Sun (see \cite{dow, lun}). The atmosphere of 
these three planets is primarily made up of a hydrogen-helium gas mixture and the dynamics is dominated by zonal flows that feature a banded structure -- 
flows of this type are also common in terrestrial polar regions but the Earth's atmospheric circulation at midlatitudes is much more complicated (see \cite{cj2}). Latitudinal bands are also 
the main atmospheric features on Uranus but they are considerably fewer and only visible in the infrared (see \cite{lun}):  the lack of an internal energy source results in less drastic changes of 
the atmospheric flow pattern with latitude and an overall rather bland atmosphere -- an additional factor being that Uranus lies sideways, with its poles 
where its equator should be, so that its icy core makes the temperatures globally uniformly low, impeding the formation of localized flow patterns. 
Note that the strongest atmospheric flows on any planet were measured on Neptune (reaching supersonic speeds in equatorial regions). The 
wind patterns on Neptune and Uranus lack Jupiter's multiple zonal winds that flow alternatively in opposite directions: there 
is a westward atmospheric flow at low latitudes and an eastward flow at higher latitudes in each hemisphere (see Figure \ref{fig2}).

\begin{figure}[ht]
\begin{center}
\begin{minipage}{199mm}{
\resizebox*{19cm}{!}{\includegraphics{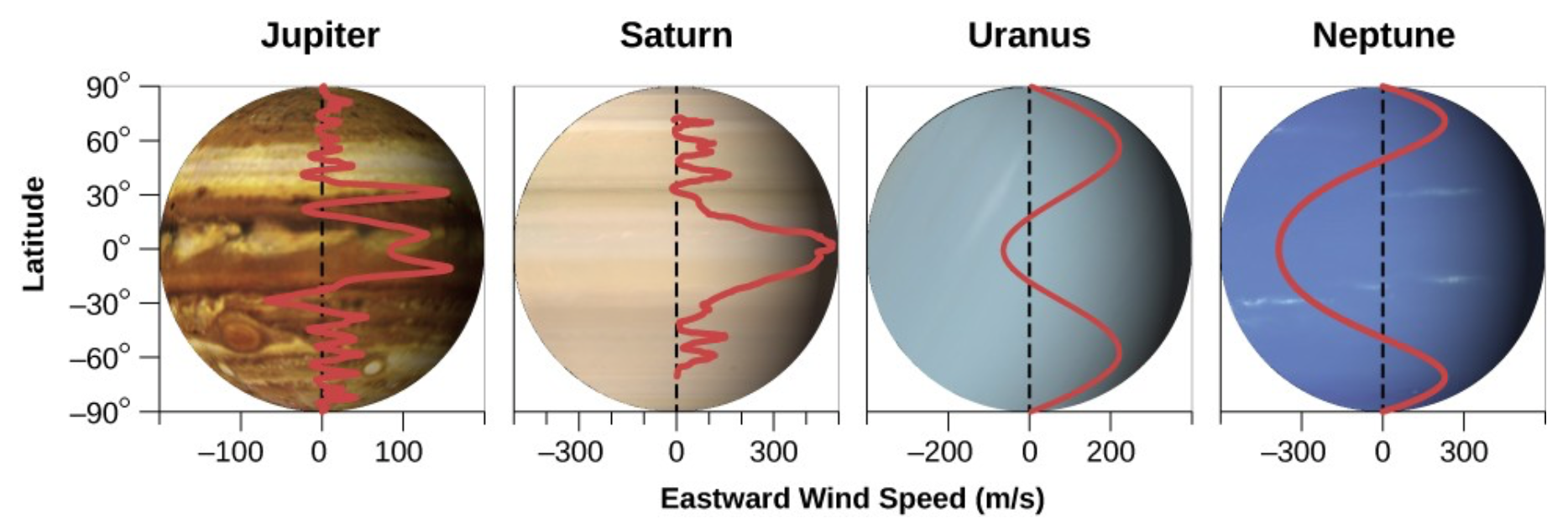}}}
\caption{\footnotesize{Variation of the mean zonal winds with latitude on the giant planets of our solar system, 
measured relative to the planet's rotation speed about its polar axis (Credit: OpenStax CNX). The traces of methane (which absorbs red light) in their upper atmosphere gives Uranus and Neptune a 
blue hue, obscuring the visibility of specific flow patterns. These pictures show the high altitude clouds just beneath the stratosphere (at the top of the troposphere) -- the only planetary atmosphere in our solar system 
transparent enough to see through from space being that of the Earth.}}
\label{fig2}
\end{minipage}
\end{center}
\end{figure}

The stratosphere of the Earth and of the outer planets of our solar system is thermally stably stratified and presents a rapid decrease of density with height. In large-scale atmospheric flow conditions, gas 
parcels move adiabatically (i.e. without loosing or gaining heat), thus conserving potential temperature (see \cite{holton}). Since stratospheric isentropic surfaces 
(level sets of potential temperature) are practically of constant height (see the discussions in \cite{cf, knox}), taking a spherical model for each 
specific planet, we see that the motion of inviscid fluids on the surface of a rotating sphere is relevant to the dynamics of the stratosphere. Consequently, the study 
of the Euler equation on a rotating sphere offers insight into the dynamics of the stratosphere of the outer planets in our solar system. We will pursue this aspect in some detail in Section 6. 
Note that this issue is not relevant for the inner planets of our solar system: Mercury has no atmosphere, while the atmospheres of Venus and Mars lack a stratosphere. Without 
a stable stratification (which is the hallmark of the stratosphere, where the energy balance is primarily determined by absorption and emission of radiation), 
two-dimensional flows on a rotating sphere fail to be pertinent for atmospheric dynamics. 

It is well-established that the Rossby-Haurwitz waves  play an important role in the large-scale dynamics of atmospheric flows. In particular, the zonal spherical harmonics capture the pattern of the band 
structure in the upper atmosphere of the outer planets, comprised of zonal flows whose direction alternates from westward to eastward. But 
superimposed on these flows one often observes non-zonal features (referred to as ``eddies" in the atmospheric sciences), like Jupiter's Great Red Spot. First 
thought of as an exotic occurrence, it is now appreciated that such long-lived vortices are frequently encountered throughout the solar system. For example, 
several spots occur in Saturn's and Neptune's atmosphere (like the Polar Hexagon and the Great Dark Spot, respectively), while in Jupiter's atmosphere 
there are upwards of a dozen smaller vortices near the latitude 22$^\circ$S (where 
the Great Red Spot is centred); see the discussion in \cite{dow}. The spherical harmonics by themselves cannot cover this plethora of flows. It is therefore of interest 
to develop an approach that can provide large families of Rossby-Haurwitz-like solutions. This is precisely our aim when pursuing bifurcation from Rossby-Haurwitz waves. 
Let us also point out that the scarcity of stable atmospheric flows makes flows likely to be unstable also of great interest. In this context, note that 
Jupiter's Great Red Spot is confined by an eastward jet stream to its 
south and a westward one to its north (which explains why it rotates counterclockwise), and while a change of direction in zonal flows if often indicative of their linear instability by means of a variant of 
Rayleigh's criterion (see \cite{Taylor}), only small changes were noted in the dynamics of the Great Red Spot since 1831. This shows that even potentially unstable flow patterns can be 
(relatively) long-lived. We would also like to emphasise the importance of not performing the analysis within the flat geometry of the $f$-plane or $\beta$-plane approximation. 
Even the Great Red Spot (large enough to engulf Earth) is not an isolated vortex but rather a global system involving involving one large
anticyclone, and several smaller ones in the same anticyclone zone, and a filamentary region in the adjacent equatorward shear zone (see \cite{dow}).

\subsection{Main results and organization of the article} 

\label{MR}

\subsubsection{Symmetry} Our first result addresses symmetries of the solutions of~\eqref{ellipticpsi}.

\begin{theorem}
\label{theosymmetry}
Consider $\psi$ a solution of~\eqref{ellipticpsi} for some $\omega \in \mathbb{R}$. If $F' > -6$, then $\psi$ is a zonal flow, modulo a rotation in $\mathbb{O}(3)$.
\end{theorem}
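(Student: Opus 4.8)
The plan is to linearize \eqref{ellipticpsi} along the rotation group and to play the rigidity of the linearized operator against the spectral gap encoded in the hypothesis $F'>-6$. Write $\mathcal{L}:=-\Delta+F'(\psi)$ for the linearization, let $x_1,x_2,x_3$ be the Cartesian coordinate functions (which span $\mathbb{E}_1$ and satisfy $-\Delta x_i=2x_i$, with $x_3=\sin\theta$), and let $L_1,L_2,L_3$ be the three generators of the rotation group (the angular-momentum vector fields, with $L_3=\partial_\varphi$). Each $L_i$ is a Killing field, hence commutes with $\Delta$, so differentiating \eqref{ellipticpsi} along $L_i$ and using the chain rule gives
$$\mathcal{L}(L_i\psi)=2\omega\,L_i(\sin\theta),\qquad i=1,2,3.$$
Since $L_1x_3=x_2$, $L_2x_3=-x_1$ and $L_3x_3=0$, the right-hand sides lie in $\mathbb{E}_1$; in particular $\mathcal{L}(L_3\psi)=0$ for every $\omega$, reflecting the invariance of \eqref{ellipticpsi} under rotations about the polar axis.

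The second ingredient is the spectral gap. For $w\perp\mathbb{E}_0\oplus\mathbb{E}_1$ the lowest available eigenvalue of $-\Delta$ is that of $\mathbb{E}_2$, namely $6$, whence $\int_{\mathbb{S}^2}|\nabla w|^2\,{\rm d}\sigma\ge 6\int_{\mathbb{S}^2}w^2\,{\rm d}\sigma$. Combined with $F'>-6$ pointwise, this yields, with $\mathcal{Q}(w):=\langle\mathcal{L}w,w\rangle$,
$$\mathcal{Q}(w)=\int_{\mathbb{S}^2}\big(|\nabla w|^2+F'(\psi)\,w^2\big)\,{\rm d}\sigma>0\qquad\text{for all } w\in(\mathbb{E}_0\oplus\mathbb{E}_1)^\perp,\ w\not\equiv0,$$
the strictness coming precisely from the strict inequality $F'>-6$. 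The threshold is sharp: at $F'\equiv-6$ any nonzero $w\in\mathbb{E}_2$ gives $\mathcal{Q}(w)=0$, which is exactly why the mode-two Rossby–Haurwitz waves are excluded.

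The heart of the argument is the choice of rotation axis. I would take $m:=\int_{\mathbb{S}^2}x\,\psi\,{\rm d}\sigma\in\mathbb{R}^3$, the first-moment (degree-one) vector of $\psi$, set $M_m:=\sum_i m_iL_i$, and test the form $\mathcal{Q}$ on $w:=M_m\psi$, the derivative of $\psi$ along the rotation about $m$. Two facts make $w$ admissible. First, $w$ has zero mean and, by integration by parts for the Killing fields, $\int_{\mathbb{S}^2}x_j\,w\,{\rm d}\sigma=(m\times m)_j=0$, so $w\perp\mathbb{E}_0\oplus\mathbb{E}_1$. Second, $\mathcal{L}w=\sum_i m_i\,2\omega\,L_ix_3=2\omega\,(m_1x_2-m_2x_1)$, which vanishes when $\omega=0$; and when $\omega\neq0$ one must first show $m_1=m_2=0$. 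This is where the linearized structure does the work: self-adjointness gives $\langle\mathcal{L}(L_1\psi),L_3\psi\rangle=\langle L_1\psi,\mathcal{L}(L_3\psi)\rangle=0$, and since $\mathcal{L}(L_1\psi)=2\omega x_2$ while $\langle x_2,L_3\psi\rangle=-\int x_1\psi=-m_1$, this forces $m_1=0$ (and symmetrically $m_2=0$). Hence in all cases $\mathcal{L}w=0$, so $\mathcal{Q}(w)=\langle\mathcal{L}w,w\rangle=0$; the strict positivity above then forces $w\equiv0$. Therefore $\psi$ is invariant under the one-parameter group of rotations about $m$ (and is constant if $m=0$), i.e. $\psi$ is a zonal flow modulo a rotation in $\mathbb{O}(3)$.

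The main obstacle, and the crux of the proof, is exactly this selection of axis together with the moment identities $m_1=m_2=0$ in the Coriolis case $\omega\neq0$: without pinning $w$ into $(\mathbb{E}_0\oplus\mathbb{E}_1)^\perp$ the spectral gap cannot be invoked, and it is the first-moment vector that simultaneously places $w$ in that subspace and makes $\mathcal{L}w=0$. By contrast the supporting steps—commuting $L_i$ with $\Delta$, the Poincaré inequality on $\bigoplus_{k\ge2}\mathbb{E}_k$, elliptic regularity to justify that $L_i\psi$ are admissible test functions, and the Killing-field integrations by parts—I expect to be routine.
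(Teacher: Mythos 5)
Your argument is correct and rests on the same mechanism as the paper's proof (Theorem~\ref{theo1}, parts (ii)--(iv)): differentiate the equation along a rotational Killing field, observe that the derivative is annihilated by $\mathcal{L}=-\Delta+F'(\psi)$ and orthogonal to $\mathbb{E}_0\oplus\mathbb{E}_1$, and play this against the spectral gap at the third eigenvalue $6$ via $F'>-6$. Where you genuinely diverge is in the implementation of the two key steps. First, the paper breaks the symmetry at the outset: for $\omega=0$ it rotates coordinates so that the degree-one component of $\psi$ is $\alpha\sin\theta$ and then tests with $\partial_\varphi\psi$; your invariant formulation via the moment vector $m=\int_{\mathbb{S}^2}x\,\psi\,{\rm d}\sigma$ and the test function $M_m\psi$ is the same choice in disguise (rotating $m$ to the pole aligns $\mathbb{P}_1\psi$ with $\sin\theta$ and turns $M_m\psi$ into a multiple of $\partial_\varphi\psi$), but it makes the orthogonality $w\perp\mathbb{E}_1$ transparent through $m\times m=0$. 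Second, and more substantively, for $\omega\neq0$ the paper establishes the needed moment identity $\int x_1\psi\,{\rm d}\sigma=\int x_2\psi\,{\rm d}\sigma=0$ (its part (iii); the paper writes $\mathbb{P}_2\psi=\alpha\sin\theta$ but, since $\sin\theta\in\mathbb{E}_1$, this is the degree-one projection) by multiplying the stationary equation by ${\rm e}^{{\rm i}\varphi}$ and performing a lengthy integration-by-parts computation; your derivation via self-adjointness, $\langle\mathcal{L}(L_1\psi),L_3\psi\rangle=\langle L_1\psi,\mathcal{L}(L_3\psi)\rangle=0$, reaches the same identity in two lines and is cleaner. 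What the paper's route buys is an entirely elementary verification; what yours buys is conceptual clarity and a formulation that does not privilege coordinates until the very end.

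One loose end: when $m=0$ your test function $M_m\psi$ vanishes identically, so the argument as written proves nothing in that case, and your parenthetical claim that $\psi$ is then constant is asserted without proof. The patch is immediate with your own tools: if $m=0$, then each $L_i\psi$ is separately orthogonal to $\mathbb{E}_0\oplus\mathbb{E}_1$ (all first moments of $\psi$ vanish), and
$$\mathcal{Q}(L_1\psi)=2\omega\langle x_2,L_1\psi\rangle=-2\omega\langle L_1x_2,\psi\rangle=2\omega\int_{\mathbb{S}^2}x_3\,\psi\,{\rm d}\sigma=0\,,$$
with the analogous computation for $L_2$ and $\mathcal{L}(L_3\psi)=0$ outright; strict positivity of $\mathcal{Q}$ then gives $L_i\psi=0$ for all $i$, so $\psi$ is constant, in particular zonal. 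Note that the paper sidesteps this degenerate case automatically, since its conclusion $\partial_\varphi\psi\equiv0$ in the rotated frame does not require $\alpha\neq0$. With that two-line addition your proof is complete.
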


This theorem is an immediate consequence of Theorem~\ref{theo1} in Section~\ref{sectionrigidity}. Note that the number $-6$ is the second eigenvalue of the Laplace-Beltrami operator.
Such symmetry results are proved in Constantin-Drivas-Ginsberg~\cite{CDG} for general Riemannian surfaces with Killing field, under the condition that $F'$ is larger than the smallest eigenvalue of the Laplace-Beltrami operator, which on the sphere amounts to $F'>-2$. The symmetric structure of the sphere explains the improvement that we are able to obtain.

The sharpness of this result can be seen from the Rossby-Haurwitz solutions in $\mathbb{E}_1 + \mathbb{E}_2$ (the first and second eigenspaces of $-\Delta$). Modulo $\mathbb{O}(3)$, they are of the type $\psi_0 = \alpha \sin \theta + Y$, where $\alpha \in \mathbb{R}$ and $Y \in \mathbb{E}_2$, and correspond to $F'=-6$; they are not zonal in general. This is a first indication that Rossby-Haurwitz solutions in $\mathbb{E}_1 + \mathbb{E}_2$ play a key role in the qualitative study of the 2D Euler problem on the sphere.

\subsubsection{Stability of zonal solutions}

Persistent zonal (east-west) flows are ubiquituous in planetary atmospheres. While the Earth's atmosphere typically exhibits meandering jets, the zonal flows on the outer planets present a long-lived 
coherence that might be indicative of stability. In Section~\ref{SOSS}, after a brief discussion of the Rayleigh and Fjortoft necessary criteria for the linear instability of zonal flows on a sphere, we present the proof of an Arnold-type 
nonlinear stability result for zonal flows. While variations of this result can be found in the research literature, with the possibly most comprehensive approach for general rotating bodies due to Taylor \cite{Taylor}, 
we present a simple proof for the case of a rotating sphere. We also show that this result yields the stability of the mean zonal flow patterns on Uranus and Neptune.

\subsubsection{Stability of Rossby-Haurwitz solutions of degree 2} 
The classical theory of Arnold~\cite{Arnold1,Arnold2} adresses the stability of stationary solutions of the Euler equation on a planar domain $\Omega$. Let $\lambda$ denote the largest eigenvalue of the negative Dirichlet Laplacian $-\Delta$ on $\Omega$. Then a solution of $\Delta \psi = F(\psi)$ is called Arnold stable of type I if  $-\lambda < F' < 0$, and Arnold stable of type II if $F'>0$. These solutions can be showed to be nonlinearly stable for the 2D Euler problem, through the construction of an energy functional.

Due to its rich symmetry structure, the relevant threshold for the sphere becomes the second eigenvalue of $-\Delta$, instead of the first as in the classical Arnold theory. Namely, the following result holds.

\begin{theorem}
If $\psi$ is a solution of~\eqref{ellipticpsi} with $-6<F'<0$, then it is stable in $H^2({\mathbb S}^2)$.
\end{theorem}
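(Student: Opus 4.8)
The plan is to run an Arnold-type convexity argument, using as Lyapunov functional the sum of the kinetic energy and a Casimir, and to control the single ``bad'' mode — the ground-state component in $\mathbb{E}_1$ — through its conservation law. First I would introduce the potential vorticity $q = \Delta\psi + 2\omega\sin\theta$, which is transported by the (area-preserving) flow, so that $\int_{\mathbb{S}^2}\Phi(q)\,{\rm d}\sigma$ is conserved for every $\Phi$, as is the kinetic energy $E(\psi) = \tfrac12\int_{\mathbb{S}^2}|\nabla\psi|^2\,{\rm d}\sigma$. Set $\mathcal{H}(\psi) = E(\psi) + \int_{\mathbb{S}^2}\Phi(q)\,{\rm d}\sigma$. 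Since $F'<0$, $F$ is invertible on the range of $\psi_0$, and I would choose $\Phi$ by $\Phi' = F^{-1} + \mathrm{const}$; writing $q_0 = \Delta\psi_0 + 2\omega\sin\theta = F(\psi_0)$, a short computation (integrating by parts and using that the kernel of $\Delta$ on $\mathbb{S}^2$ is the constants) shows that this makes $\psi_0$ a critical point of $\mathcal{H}$ and gives $\Phi''(q_0) = 1/F'(\psi_0)$.

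Next, writing $\phi = \psi - \psi_0$ and $q - q_0 = \Delta\phi$, Taylor's formula with integral remainder yields the exact identity $\mathcal{H}(\psi) - \mathcal{H}(\psi_0) = \tfrac12\int|\nabla\phi|^2\,{\rm d}\sigma + \tfrac12\int\Phi''(\xi)\,(\Delta\phi)^2\,{\rm d}\sigma$, with $\xi$ pointwise between $q$ and $q_0$. The hypothesis $-6<F'<0$, together with a compactness argument near the range of $q_0$, gives a uniform bound $\Phi''\le -\mu$ with $\mu>\tfrac16$; hence $\mathcal{H}(\psi)-\mathcal{H}(\psi_0)\le \tfrac12\int|\nabla\phi|^2 - \tfrac\mu2\int(\Delta\phi)^2$. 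Decomposing $\phi = \sum_k\phi_k$ along the eigenspaces $\mathbb{E}_k$ (eigenvalue $k(k+1)$) diagonalises both integrals, and the coefficient of $\|\phi_k\|_{L^2}^2$ equals $\tfrac12 k(k+1)\,(1-\mu k(k+1))$: it vanishes for $k=0$, is negative for every $k\ge 2$ (where it is bounded above by $-\tfrac12 c\,(k(k+1))^2$ with $c=\mu-\tfrac16>0$, since $k(k+1)\ge 6$ there), and is positive only for $k=1$. This is precisely where the threshold $-6$, rather than the eigenvalue $-2$ that governs the general result of~\cite{CDG}, is used: the would-be obstruction is quarantined into the single $\mathbb{E}_1$ term.

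The main obstacle is therefore this positive $\mathbb{E}_1$ contribution, which I would neutralise using Proposition~\ref{conserved}\,$(iii)$: the $L^2$-projection $P_{\mathbb{E}_1}\psi$ is conserved, so, $\psi_0$ being stationary, $\|P_{\mathbb{E}_1}\phi(t)\|_{L^2} = \|P_{\mathbb{E}_1}\phi(0)\|_{L^2}$ for all $t$. Setting $\phi_{\ge 2}:=\sum_{k\ge 2}\phi_k$ and rearranging the previous inequality gives $\tfrac c2\|\Delta\phi_{\ge 2}(t)\|_{L^2}^2 \le C\|P_{\mathbb{E}_1}\phi(t)\|_{L^2}^2 - \big(\mathcal{H}(\psi(t)) - \mathcal{H}(\psi_0)\big)$. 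Invoking conservation of $\mathcal{H}$ and of the $\mathbb{E}_1$-projection, the right-hand side is frozen at its initial value $C\|P_{\mathbb{E}_1}\phi(0)\|_{L^2}^2 - \big(\mathcal{H}(\psi(0))-\mathcal{H}(\psi_0)\big)$, which is $O(\|\phi(0)\|_{H^2}^2)$ because $\psi_0$ is a critical point of the functional $\mathcal{H}$.

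Since the $H^2$-norm on $\mathbb{E}_{\ge 2}$ is equivalent to $\|\Delta\,\cdot\,\|_{L^2}$, and after normalising the dynamically irrelevant constant mode $\phi_0$, the bound above controls $\|\phi_{\ge 2}(t)\|_{H^2}$ by $\|\phi(0)\|_{H^2}$ uniformly in time; combined with conservation of $\|P_{\mathbb{E}_1}\phi\|_{L^2}$ this yields $\|\phi(t)\|_{H^2}\le C'\|\phi(0)\|_{H^2}$ for all $t$, the asserted $H^2$-stability. The remaining work is technical rather than conceptual: establishing the uniform bound $\Phi''\le-\mu<-\tfrac16$ near the range of $q_0$, and verifying the $C^2$-regularity of $\mathcal{H}$ on $H^2$ that justifies the quadratic estimate for $\mathcal{H}(\psi(0))-\mathcal{H}(\psi_0)$.
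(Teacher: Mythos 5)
Your proposal is correct and is essentially the paper's own proof of Theorem~\ref{thmarnold}: an Arnold energy--Casimir argument in which criticality at $\psi_0$ is enforced by $\Phi'=F^{-1}$ (the paper's normalization $K''(F(\psi_0))F'(\psi_0)=1$), the spectral gap at the second eigenvalue makes the quadratic form strictly negative on all modes $k\ge 2$ precisely when $F'>-6$, and the degree-one conservation law of Proposition~\ref{conserved}\,(iii) disposes of the single positive mode. The only structural difference is packaging: the paper folds the conserved quantity into the functional as $B(\mathbb{P}_1\psi)^2$ with $B=-10$ (together with the term $A\sin\theta\,\Omega$, $A=\omega$, which your additive constant in $\Phi'$ renders unnecessary), obtaining a single negative-definite Lyapunov functional, whereas you leave the form indefinite on $\mathbb{E}_1$ and use the same conservation law as a side constraint; your version has the merit of spelling out the exact Taylor identity and the quantitative bound that the paper subsumes under ``standard arguments''.

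Two steps, as literally written, need repair, though both are routine. First, for $\omega\neq 0$ your claim that the projection $\mathbb{P}_1\psi$ is conserved is not correct as stated: by Corollary~\ref{iom}\,(iii) only $c_1^0$ and the moduli $|c_1^{\pm 1}|$ are flow-invariants, the coefficients $c_1^{\pm1}(t)$ rotating by the phases ${\rm e}^{\mp{\rm i}\omega t}$. The conservation of $\|\mathbb{P}_1(\psi-\psi_0)\|_{L^2}$ nevertheless survives because $\mathbb{P}_1\psi_0$ is zonal, i.e.\ proportional to $\sin\theta$ so that $\psi_0$ has no $c_1^{\pm1}$ components --- but this is precisely the content of Theorem~\ref{theo1}\,(iii) for solutions of \eqref{ellipticpsi} and must be invoked (alternatively, reduce to $\omega=0$ via the change of frame \eqref{changeofframe}, rotations being isometries of $H^2$). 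Second, a ``compactness argument near the range of $q_0$'' does not by itself yield $\Phi''(\xi)\le-\mu$ in the Taylor remainder: an $H^2$-perturbation makes $q(0)-q_0$ small only in $L^2$, so the intermediate point $\xi$ can lie far from the range of $q_0$. The correct fix is to note that compactness of $\psi_0(\mathbb{S}^2)$ gives $-6+\delta\le F'\le -\delta$ on it, hence $\Phi''\le-\mu$ with $\mu=\tfrac{1}{6-\delta}>\tfrac16$ on the range of $q_0$, and then to extend $\Phi'$ to all of $\mathbb{R}$ with slope in $[-M,-\mu]$; this is admissible since the Casimir $\iint_{\mathbb{S}^2}\Phi(q)\,{\rm d}\sigma$ is conserved for an arbitrary $\Phi$ while the criticality condition constrains $\Phi'$ only on the range of $q_0$. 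The resulting global two-sided bound $|\Phi''|\le M$ is also exactly what justifies your quadratic estimate $|\mathcal{H}(\psi(0))-\mathcal{H}(\psi_0)|\lesssim\|\psi(0)-\psi_0\|_{H^2}^2$, replacing any discussion of $C^2$-regularity of $\mathcal{H}$ on $H^2$.
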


This theorem corresponds to Theorem~\ref{thmarnold} in Section~\ref{SOSS}.
It is natural to conjecture that the transition to instability occurs as $\min\{ F'\}$ crosses the threshold of $-6$. This points to the importance of Rossby-Haurwitz solutions in $\mathbb{E}_1 + \mathbb{E}_2$, for which $F'=-6$. It was observed following Theorem~\ref{theosymmetry} that these flows are in some sense the "first" genuine non-zonal stationary flows; we now see that they appear at the transition to instability since 
we are able to analyze the stability of Rossby-Haurwitz solutions of degree 2 precisely.

\begin{theorem}
\begin{itemize}
\item[(i)] The set $\mathbb{E}_1 + \mathbb{E}_2$ is stable in $H^2({\mathbb S}^2)$.
\item[(ii)] Non-zonal Rossby-Haurwitz solutions of degree 2 are unstable in $H^2({\mathbb S}^2)$
\item[(iii)] Zonal Rossby-Haurwitz solutions of degree 2 are stable in $H^2({\mathbb S}^2)$.
\end{itemize}
\end{theorem}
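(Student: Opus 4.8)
The plan is to treat all three assertions with one toolbox. Decompose $\psi=\sum_{k\ge0}\psi_k$ with $\psi_k:=P_k\psi\in\mathbb E_k$ ($P_k$ the orthogonal projection onto $\mathbb E_k$), set $a_k:=\|\psi_k\|_{L^2}^2$ and $\psi_{\ge3}:=\sum_{k\ge3}\psi_k$, so that the energy equals $\tfrac12\sum_k k(k+1)\,a_k$ and, writing $q:=\Delta\psi+2\omega\sin\theta$ for the absolute vorticity, every Casimir $\int_{\mathbb S^2}G(q)\,{\rm d}\sigma$ is conserved. The first step is to notice that since $\int_{\mathbb S^2}\Delta\psi\,(2\omega\sin\theta)\,{\rm d}\sigma=-4\omega\langle\psi,\sin\theta\rangle$ depends only on $P_1\psi$, the conservation of $\int q^2$ together with the conservation of $P_1\psi$ (Proposition~\ref{conserved}) forces the relative enstrophy $\int_{\mathbb S^2}|\Delta\psi|^2\,{\rm d}\sigma=\sum_k[k(k+1)]^2a_k$ to be conserved as well.

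For (i) I would form the conserved combination
\[
\int_{\mathbb S^2}|\Delta\psi|^2\,{\rm d}\sigma-6\int_{\mathbb S^2}|\nabla\psi|^2\,{\rm d}\sigma=\sum_{k\ge1}k(k+1)\big(k(k+1)-6\big)a_k=\sum_{k\ge1}k(k+1)(k-2)(k+3)\,a_k .
\]
The coefficient of $a_2$ vanishes, that of $a_1$ is the separately conserved $-8a_1$, and for $k\ge3$ the coefficients are positive and grow like $k^4$. Hence $\sum_{k\ge3}k(k+1)(k-2)(k+3)\,a_k$ is conserved and comparable to $\|\psi_{\ge3}\|_{H^2}^2$, which is exactly the squared $H^2$-distance of $\psi$ to $\mathbb E_1+\mathbb E_2$ (up to the inert constant mode). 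A conserved quantity equivalent to this distance propagates its smallness, giving the (even quantitative) stability of the set $\mathbb E_1+\mathbb E_2$.

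The analysis of (ii)–(iii) rests on the fact that for these waves $F'\equiv-6$, so $q=-6\psi$ and the entire affine family $\{-\tfrac\omega2\sin\theta+Y:\,Y\in\mathbb E_2\}$ consists of equilibria. Accordingly the second variation of the energy–Casimir functional $\mathcal H=\tfrac12\int|\nabla\psi|^2-\tfrac1{12}\int q^2$ is positive on $\mathbb E_1$, negative on $\mathbb E_{\ge3}$, and exactly degenerate on $\mathbb E_2$. Since (i) controls the negative $\mathbb E_{\ge3}$ directions and $P_1\psi$ pins the $\mathbb E_1$ component, the stability of one specific degree-2 wave is decided purely inside the degenerate subspace $\mathbb E_2$, hence by third-order data. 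Identifying $Y\in\mathbb E_2$ with a trace-free symmetric matrix $A$ via $Y(x)=x^\top Ax$, the surviving cubic invariant is $\int_{\mathbb S^2}Y^3\,{\rm d}\sigma\propto\operatorname{tr}(A^3)$, and the conserved cubic Casimir restricts on the energy sphere $\{\|\psi_2\|=\text{const}\}$ to a multiple of $\operatorname{tr}(A^3)$ plus, when $\omega\neq0$, a linear term $\propto\omega^2\langle\psi_2,\,3\sin^2\theta-1\rangle$ coming from $\int\psi_1^2\psi_2$. For (iii) the zonal wave ($Y$ axisymmetric about the polar axis, i.e. $A$ with a double eigenvalue aligned with the rotation axis) is a critical point of both terms, and the linear term—present exactly because $\omega\neq0$ breaks the residual $\mathrm{SO}(3)$ degeneracy down to the polar $\mathrm{SO}(2)$ that fixes the wave—makes this critical point a strict extremum on the energy sphere. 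This turns $\mathcal H$ augmented by the cubic Casimir into a strict Lyapunov functional, yielding $H^2$-stability.

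For (ii) the non-zonal waves correspond to matrices $A$ with three distinct eigenvalues, which are \emph{not} critical points of $\operatorname{tr}(A^3)$ on the energy sphere; the reduced third-order drift on $\mathbb E_2$ then carries nearby data away from the equilibrium, in the spirit of the unstable intermediate-axis rotation of a rigid body. To make this rigorous I would linearize \eqref{Eomega} about $\psi_0$, obtaining $\partial_t\Delta\phi=-[\psi_0,(\Delta+6)\phi]$ with $[\cdot,\cdot]$ the spherical Jacobian, note that $(\Delta+6)$ annihilates $\mathbb E_2$ so that $\mathbb E_2$ lies in the kernel of the linearized operator (consistent with the family of equilibria) and an $\mathbb E_2$-perturbation is driven only through its coupling to the higher even modes, exhibit the resulting growing (secular or exponential) mode for non-zonal $\psi_0$, and upgrade this to nonlinear $H^2$-instability by a standard bootstrap. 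The main obstacle is precisely this spectral step: the operator $\phi\mapsto-\Delta^{-1}[\psi_0,(\Delta+6)\phi]$ is nonlocal and couples $\mathbb E_k$ to $\mathbb E_{k\pm2}$ with no finite truncation, so one must extract a genuine instability—rather than the mere indefiniteness of $\delta^2\mathcal H$, which holds for the stable zonal waves as well—and show that, in contrast to the zonal case, no residual conserved quantity protects the degenerate $\mathbb E_2$ directions. Converting this finite-dimensional $\operatorname{tr}(A^3)$ dichotomy into a statement about the full $H^2$-dynamics is the delicate point.
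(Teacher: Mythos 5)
Your treatment of assertion (i) is correct and is essentially the paper's own argument: conservation of the energy and of the enstrophy (the latter via the quadratic Casimir combined with the degree-one invariants of Corollary~\ref{iom}) gives conservation of $\sum_{k\ge1}k(k+1)\bigl(k(k+1)-6\bigr)a_k$, whose $k=2$ coefficient vanishes, whose $k=1$ part is separately conserved, and whose $k\ge3$ tail is comparable to the squared $H^2$-distance to $\mathbb{E}_1+\mathbb{E}_2$; the paper packages the same cancellation into the functional $\mathcal{E}$ in part (iii) of the proof of Theorem~\ref{thmRH}. For assertion (ii), however, you have missed the actual mechanism, and what you propose instead is a program, not a proof. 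The paper's instability is elementary: perturbing $\psi_0=\alpha\sin\theta+\beta Y$ by $\tfrac1n\sin\theta$ produces, by \eqref{eotw}, another \emph{exact} travelling wave whose speed $\widehat c$ differs from $c$ by $O(1/n)$, so the two solutions dephase and $\sup_{t>0}\|\widehat\psi^n(t)-\psi_0(t)\|_{L^2}$ stays bounded below by the fixed $L^2$-size of the non-zonal profile while the initial distance tends to $0$. No spectral analysis is needed, and none is likely to succeed: the Remark following Theorem~\ref{thmRH} conjectures that \emph{all} degree-2 waves are orbitally stable, so the expected instability is pure phase drift along the family of travelling waves rather than growth of a linearized mode; note also that the theorem asserts instability of every non-zonal wave, not merely of an ``intermediate-axis'' class, which already tells against your rigid-body analogy. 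The step you yourself flag as the delicate point — extracting a genuine growing mode and bootstrapping to nonlinear $H^2$-instability — is exactly the gap, and it is probably unfillable as stated.

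For assertion (iii), your $\operatorname{tr}(A^3)$ reduction corresponds to the cubic invariant $I_3$ of \eqref{opsi}, but it cannot close the argument alone. First, every energy--Casimir functional is rotation invariant, so the axisymmetric matrix is at best an extremum modulo the $\mathbb{SO}(3)$-orbit, and adding a cubic term to a functional whose Hessian vanishes on $\mathbb{E}_2$ can never produce sign-definiteness there; a ``strict Lyapunov functional'' of the kind you describe does not exist. Second, you attribute the pinning term to $\omega\neq0$, but the paper's first step reduces by \eqref{changeofframe} and scaling to $\omega=0$, $\beta=1$ with arbitrary degree-one coefficient $\alpha$; the surviving cross term in $I_3$ is $\propto\alpha^2c_2^0$ (relation \eqref{opsi3}), and even when $\alpha\neq0$ one still needs the \emph{quartic} invariant $I_4$ to derive $|c_2^2(t)|^2-15\,|c_2^1(t)|^2=O(\varepsilon)$ (relation \eqref{opsigc1}) before the matching of \eqref{opsi3} at order $\delta^3$ yields a contradiction. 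Third, in the degenerate case $\alpha=0$ — which your sketch cannot reach, since both the $\omega$-term and the $\alpha$-term vanish and $I_4$ then gives no information beyond $I_2$ — the paper must invoke the \emph{quintic} Casimir $I_5$, whose consequence \eqref{asy3} is incompatible with \eqref{|instr} and \eqref{asy2}. So your finite-dimensional dichotomy is the right heuristic, but the actual proof is an asymptotic matching among $I_2,I_3,I_4,I_5$ on the constraint manifold, and your proposal supplies neither the higher invariants nor the matching.
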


 This theorem is a short version of Theorem~\ref{thmRH} in Section~\ref{sectionRH}.
The notion of stability considered here is always Lyapunov stability. 

\subsubsection{Bifurcation} Exact solutions of the vorticity equation \eqref{Eomega} are very useful for gaining insight into the dynamics, for validating models and for evaluating numerical 
discretizations. Due to the scarcity of the available explicit exact non-zonal solutions, we develop a bifurcation approach that permits the construction of families of exact solutions to \eqref{ellipticpsi} for some classes of parameter-dependent nonlinearities $F$ with specific structural properties. To ensure that these solutions comprise non-zonal flows, we implement a symmetry-breaking approach. We also derive {\it a priori} bounds of the velocity 
and of the vorticity along the continuum of solutions constructed by means of bifurcation (see Theorems \ref{exi1}-\ref{exi2}).

\subsubsection{Stratospheric flows} We develop a methodology for embedding the 2D flows studied hitherto into the 3D dynamics of the stratosphere. More precisely, solutions to $\Delta\psi=F(\psi)$ on 
${\mathbb S}^2$ correspond to geostationary solutions $\widehat{\psi}=\psi(\varphi + \omega t,\theta)$ of the vorticity equation \eqref{Eomega} which are restrictions to the surface of the sphere of solutions 
to the 3D system that describes the leading-order dynamics of the stratosphere (see Theorem \ref{zf}), where the density decreases and the temperature increases with height above the tropopause. This feature 
appears to be replicated by solutions to \eqref{ellipticpsi} for suitable forcings of the gravity acting in the radial direction (typically encountered if the geopotential surfaces are not spheres) but we do not 
pursue this direction in the present paper.

\subsection{Comparison with the torus} It is instructive to compare the results which have been stated above to what is known for 2D-Euler on the torus, for which we adopt the parametrization $(x,y) \in [0,2\pi]^2$ with periodic boundary conditions, the metric being the standard one. 

From a geometric and analytic viewpoint, the sphere and the torus share many similarities: the eigenspaces of the Laplacian can be described explicitly 
and isometries act transitively in both cases. However, the sphere is more symmetric than the torus, since, for instance, all geodesics are equivalent up to isometries; this ultimately results in a different picture for the stability of stationary solutions of the Euler equation.

On the torus, there are two important classes of explicit stationary solutions: shear flows, whose stream function only depends on $y$, and eigenfunctions of the Laplacian; these are analogous to zonal flows and Rossby-Haurwitz solutions, respectively.

The first eigenvalue of $-\Delta$ is $1$, with a distinguished eigenfunction provided by $\sin y$, and also a shear flow (sometimes called Kolmogorov flow). It is analogous to the stationary solution $\sin \theta$ on the sphere. Just like for the sphere, solutions of $-\Delta \psi = F(\psi)$ with $F'>-1$ are constant. However, there are elements of the first eigenspace of the Laplacian which are not shear, and there is furthermore a rich and nontrivial family of stationary solutions bifurcating from $\sin y$, see~ Coti-Zelati-Elgindi-Widmayer~\cite{CZEW}.

It is instructive to consider other aspect ratios: if we change the parameterization to $[0,2\pi L] \times [0,2\pi]$, the Kolmogorov flow $\sin y$ is still a stationary solution, but new phenomena occur. First, the only stationary solutions in a small neighborhood are shear flows~\cite{CZEW}. Second, the stability properties of the Kolmogorov flow depend on the aspect ratio $L$:
\begin{itemize}
\item the flow is stable for $L>1$ for obvious energy reasons; 
\item it can also be proved to be stable for $L=1$, see~\cite{Arnold2};
\item linear instability for $L<1$ has been the subject of a number of works, starting with the sketch in Arnold and Meshalkin~\cite{AM}, followed by Belenkaya-Friedlander-Yudovich~\cite{BFY} and Butt{\`a}-Negrini~\cite{BN}.
\end{itemize}
Finally, we mention that inviscid damping for the linearized problem around the Kolmogorov flows was studied in Wei-Zhang-Zhao~\cite{WZZ}; such questions are certainly harder for the sphere, due to the more involved eigenfunction decomposition for the Laplacian.

\subsection*{Acknowledgements} The authors thank Vladimir \v{S}verak for a very helpful communication on the prescribed Gauss curvature equation on the sphere. 
While working on this project, PG was visiting the University of Vienna, to which he is very grateful. PG was supported by the NSF grant DMS-1501019, by 
the Simons collaborative grant on weak turbulence, and by the Center for Stability, Instability and Turbulence (NYUAD).
AC was supported by the Austrian Science Foundation (FWF) "Wittgenstein-Preis Z 387-N".

\section{The vorticity equation for inviscid flow on a rotating sphere}

\label{sectionstructure}

\subsection{Differential geometry of the sphere}

Since by the ``hairy ball theorem" the $2$-sphere $\mathbb{S}^2$ does not possess a continuously differentiable field of unit tangent vectors 
(see \cite{gp}), it is not possible to cover $\mathbb{S}^2$ with one chart. However, the standard longitude-latitude spherical coordinates 
$(\varphi,\theta) \in (0,2\pi) \times (-\frac{\pi}{2},\frac{\pi}{2})$ provide us (see Fig. \ref{fig1}) with a chart 
$$(\varphi,\theta) \in (0,2\pi) \times \big(-\tfrac{\pi}{2},\tfrac{\pi}{2}\big) \mapsto  (\cos\varphi\cos\theta,\,\sin\varphi\cos\theta,\,\sin\theta) \in \mathbb{S}^2$$
covering $\mathbb{S}^2$ with the half-circle $\varphi=0$ (the international date line, including the poles) excised. A smooth atlas for $\mathbb{S}^2$ 
is obtained by coupling this with the chart
$$(\tilde{\varphi},\tilde{\theta}) \in (0,2\pi) \times \big(-\tfrac{\pi}{2},\tfrac{\pi}{2}\big) \mapsto  
(-\cos\tilde{\varphi}\cos\tilde{\theta},\,\sin\tilde{\theta},\,\sin\tilde{\varphi}\cos\tilde{\theta}) \in \mathbb{S}^2$$
covering $\mathbb{S}^2$ with the equatorial half-circle parametrized in spherical coordinates by $\{\theta=0\,,\,\varphi \in [\pi,2\pi]\}$ excised: the 
bijective transformation $(x,y,z) \mapsto (-x,z,y)$ between the above parametrizations is equivalent to first rotating the Euclidean coordinate system by $\frac{\pi}{2}$ about the $x$-axis and then 
by $\pi$ about the $z$-axis.

Throughout this paper we will mostly rely only on spherical coordinates. The double-valued ambiguity along the international date 
line of the chart provided by the spherical coordinates can be resolved by assuming a periodic dependence on the azimuthal angle $\varphi$. 
At several places in the manuscript the use of spherical coordinates (more precisely, the fact that longitude is not well-defined and latitude 
circles degenerate into a single point at the poles) introduces artificial 
singularities at the poles that can be ruled out either by switching to the chart that covers the 
sphere with the equatorial half-circle removed or by taking smoothness into account -- see relation \eqref{bu} below. 

The $4$-dimensional tangent bundle $T\mathbb{S}^2$ of the $2$-sphere $\mathbb{S}^2$ is not parallelizable as a consequence 
of the hairy ball theorem. However, at every point $X$ of $\mathbb{S}^2 \setminus \{N,S\}$, having spherical coordinates 
$(\varphi,\theta) \in [0,2\pi] \times (-\frac{\pi}{2},\frac{\pi}{2})$, the tangent vectors 
$$\mathbf{e}_\varphi = \frac{1}{\cos \theta}\, \partial_\varphi, \qquad \mathbf{e}_\theta = \partial_\theta\,,$$
provide us with a basis of the tangent space $T_X\mathbb{S}^2$ at $X \in \mathbb{S}^2$. In these coordinates, the Riemannian volume element is
$${\rm d}\sigma = \cos \theta \,{\rm d}\varphi\,{\rm d}\theta\,,$$
and the classical differential operators (gradient and Laplace-Beltrami for scalar functions $\psi: \mathbb{S}^2 \to {\mathbb R}$, divergence for vector fields 
$F: \mathbb{S}^2 \to T\mathbb{S}^2$) are given by
\begin{align*}
& \operatorname{grad} \psi = \partial_\theta \psi \,\mathbf{e}_\theta + \frac{\partial_\varphi \psi}{\cos \theta} \,\mathbf{e}_\varphi \,,\\
& \operatorname{div} (F_\varphi \, \mathbf{e}_\varphi + F_\theta\, \mathbf{e}_\theta) = \frac{1}{\cos \theta}\,[\partial_\varphi F_\varphi + \partial_\theta (\cos \theta \,F_\theta)] \,,\\
& \operatorname{\Delta} \psi = \operatorname{div}  \operatorname{grad} \psi = \partial_\theta^2 \psi - \tan \theta\, \partial_\theta \psi + \frac{1}{(\cos \theta)^2}\, \partial_\varphi^2 \psi\,,
\end{align*}
with the formula for $\operatorname{grad}$ following from the definition $\partial_s \psi(\gamma(s)) = \operatorname{grad} \psi \cdot \gamma'(s)$ 
for a path $\gamma$ on $\mathbb{S}^2$, while the formula for $\operatorname{div}$ follows by duality (see \cite{richt}). The 
covariant derivatives have the form 
$$ \nabla_{\mathbf{e}_\theta} \mathbf{e}_\theta = \nabla_{\mathbf{e}_\theta}   \mathbf{e}_\varphi = 0 \,,\qquad 
 \nabla_{\mathbf{e}_\varphi}  \mathbf{e}_\theta = - \tan \theta \, \mathbf{e}_\varphi \,,\qquad 
 \nabla_{\mathbf{e}_\varphi} \mathbf{e}_\varphi = \tan \theta  \, \mathbf{e}_\theta\,,$$
and can be computed by projecting Euclidean derivatives on the tangent space to the $2$-sphere: for instance $ \nabla_{\mathbf{e}_\varphi}  \mathbf{e}_\theta = P \partial_\varphi  \mathbf{e}_\theta$, where $P$ is the projection operator. Note also that the $2$ sphere $\mathbb{S}^2$ 
admits the complex structure $J$ (corresponding to a rotation in the tangent space) defined by
$$J \mathbf{e}_\varphi = \mathbf{e}_\theta, \qquad J \mathbf{e}_\theta = - \mathbf{e}_\varphi \,.$$
The Laplace-Beltrami operator $\Delta$ on $\mathbb{S}^2$, operating in the Hilbert space $L^2({\mathbb S}^2)$ obtained as the 
completion of the smooth functions $f: {\mathbb S}^2 \to {\mathbb C}$ of zero mean 
(i.e., with $\iint_{{\mathbb S}^2} f\,{\rm d\sigma}=0$) with respect to the inner product
$$\langle f_1,f_2\rangle =\iint_{{\mathbb S}^2} f_1\,\overline{f_2}\,{\rm d\sigma}\,,$$
(where the overbar denotes complex conjugation) is negative, self-adjoint and its spectrum is the discrete set of eigenvalues 
$\bigcup_{j \ge 1} \{-j(j+1)\}$,  the spherical harmonics $\{Y_j^m\}_{j \ge 1,\,|m| \le j}$ being an orthonormal basis of eigenfunctions in 
$L^2({\mathbb S}^2)$, with 
$$\Delta Y_j^m=-j(j+1) Y_j^m\,,\qquad j \ge 1\,,\ m \in \{-j,\dots,j\}$$ 
(see the Appendix).

\subsection{Euler equation} We start from the stream function $\psi$, from which the velocity field $U$ is obtained as
$$U = J \operatorname{grad} \psi \quad\text{or}\quad U= u \mathbf{e}_\varphi + v \mathbf{e}_\theta\,,$$
with the geostrophic relations
\begin{equation}\label{uv}
\begin{cases}
\displaystyle u = - \partial_\theta \psi \,,\\
\displaystyle v = \tfrac{1}{\cos \theta}\,\partial_\varphi \psi\,.
\end{cases}
\end{equation}
The vorticity is then given by
\begin{equation}
\label{utoomega}
\Omega = \Delta \psi = - \operatorname{div} J U\,,
\end{equation}
while the material derivative, describing the transport by the velocity field $U$, can be expressed in the form
$$D_t = \partial_t+  \nabla_U  =\partial_t + u \nabla_{\mathbf{e}_\varphi} + v \nabla_{\mathbf{e}_\theta}\,.$$
The material derivative can be applied to scalars or vectors; when applied to scalars, it becomes
$$D_t = \partial_t + \frac{1}{\cos \theta} \left[ -\partial_\theta \psi \,\partial_\varphi + \partial_\varphi \psi \,\partial_\theta \right]\,.$$

The Euler equation on a sphere rotating at speed $\omega$ about the polar axis can then be written for either of the 
variables $\psi$, $U$, or $\Omega$. At the level of the stream function, it is equation \eqref{Eomega}:
$$
\partial_t \,\Delta \psi + \frac{1}{\cos\theta}\, \left[ -\partial_\theta \psi \partial_\varphi + \partial_\varphi \psi \partial_\theta \right] \left(\Delta \psi + 2\omega\sin\theta\right) = 0\,.
$$
To express the Euler equation in terms of the velocity field we have to add the divergence-free condition to the evolution equation \eqref{Eomega}, together with an auxiliary scalar pressure field $p$ (that arises as a Lagrange multiplier for the divergence-free constraint)
\begin{equation}
\label{eqU}
\left\{
\begin{array}{l}
D_t U + 2 \omega \sin \theta \,JU = - \operatorname{grad} p\,, \\
\operatorname{div} U = 0\,.
\end{array}
\right.
\end{equation}
At the level of the vorticity, the evolution equation becomes\footnote{Sometimes termed the two-dimensional
baroclinic Ertel equation for the material conservation of potential vorticity -- see \cite{holton}.}
\begin{equation}\label{ertel}
D_t (\Omega + 2\omega \sin \theta) = 0\,,
\end{equation}
and has to be complemented with the Biot-Savart law, which recovers at every instant $t$ the stream function (and thus the velocity field) from the vorticity:
$$\psi(\xi_0)=\iint_{\mathbb{S}^2} {\mathcal G}(\xi,\xi_0) \Omega(\xi)\,{\rm d}\sigma(\xi)\,,$$
where (see \cite{drit, mp})
$${\mathcal G}(\xi,\xi_0)=\frac{1}{2\pi}\,\ln\Big(\frac{|\xi-\xi_0|}{2} \Big) \quad\text{with}\quad |\xi-\xi_0| \quad\text{the distance in}\ 
{\mathbb R}^3\ \text{between}\ \xi \neq \xi_0\ \text{on}\ \mathbb{S}^2$$ 
is the Green function, satisfying 
\begin{equation}\label{gfe}
\Delta \mathcal{G}(\xi,\xi_0)=\delta(\xi-\xi_0) - \frac{1}{4\pi}\,,
\end{equation}
with $\delta$ the Dirac delta distribution corresponding to a point vortex located at $\xi_0 \in \mathbb{S}^2$. Note that since the velocity field 
is divergence-free, an immediate consequence of the divergence theorem is the validity of the Gauss constraint
\begin{equation}\label{gaussc}
\iint_{\mathbb{S}^2} \Omega\,{\rm d}\sigma=0\,,
\end{equation}
so that the factor $- \frac{1}{4\pi}$ in \eqref{gfe} plays the role of a compensating uniform 
vorticity distribution on $\mathbb{S}^2$ to guarantee the validity of \eqref{gaussc}. Regarding the apparent singularity of the meridional velocity component $v$ in \eqref{uv} at 
the poles, let  us point out that for any $C^1$-function $\psi: {\mathbb S}^2 \to {\mathbb R}$ the continuity of the gradient with 
respect to the spherical coordinates $(\varphi,\theta)$ implies
\begin{equation}\label{bu}
\lim_{\theta \to \pm \frac{\pi}{2}} \partial_\varphi\psi(\varphi,\theta)=0
\end{equation}
since on any genuine circle of latitude $\theta \in \big(-\tfrac{\pi}{2},\tfrac{\pi}{2}\big)$ the periodicity of $\psi$ in the longitudinal direction ensures the existence of a point where 
$\partial_\varphi\psi$ vanishes.

Generally, insight in the flow dynamics 
is more readily available working with the  stream function $\psi$, rather than with the vorticity $\Omega=\Delta \psi$. 
Equation \eqref{Eomega} is the barotropic vorticity equation, describing the motion
of an inviscid, unforced, incompressible, homogeneous fluid on a rotating sphere (see \cite{gill}). Note that with respect to the 
symplectic structure on $\mathbb{S}^2$, whose Poisson bracket is given in spherical coordinates by 
\begin{equation}\label{pobr}
\{ f, h\}= \frac{1}{\cos\theta}\,\big( \partial_\theta h\, \partial_\varphi f - \partial_\theta f\, \partial_\varphi h \big)\,,
\end{equation}
the vorticity equation \eqref{Eomega} can be expressed as the Hamiltonian flow
$$\partial_t (\Delta\psi + 2\omega\sin\theta) = \{ \Delta\psi + 2\omega\sin\theta, \psi\}\,.$$

\subsection{Symmetries} 

\label{subsectionsymmetries}
The Euler equations \eqref{Eomega} with different rotation speeds $\omega$ are related through the change-of-frame transformation
\begin{equation}
\label{changeofframe}
\psi_0(\varphi,\theta,t) \longleftrightarrow \psi_\omega(\varphi,\theta,t) = \psi_0(\varphi+\omega t,\theta,t) + \omega \sin \theta\,.
\end{equation}
More precisely, $\psi_\omega$ solves \eqref{Eomega} if and only if $\psi_0$ solves the Euler equation on a fixed sphere, (${\mathcal E}_0$).
\medskip

The classical scaling of the Euler equation in a fixed frame has to be modified to take $\omega$ into account: 
namely, if $\psi(\varphi,\theta,t)$ solves \eqref{Eomega}, 
then $\lambda \psi(\varphi,\theta,\lambda t)$ solves (${\mathcal E}_{\lambda\omega}$), for any $\lambda>0$.\medskip

Another invariance is related to the symmetries of the 2-sphere, given by the orthogonal group $\mathbb{O}(3)$, a compact Lie group of 
dimension 3, consisting of the isometries of ${\mathbb R}^3$ which fix the origin: one can think of $\mathbb{O}(3)$ as the group of 
orthogonal real $3 \times 3$ matrices or as a group of transformations of ${\mathbb R}^3$. The action of ${\mathbb O}(3)$ is defined by
$$G f(X)=f(GX)\,,\qquad X \in {\mathbb S}^2\,,\quad G \in {\mathbb O}(3)\,,$$ 
for a scalar function $f: {\mathbb S}^2 \to {\mathbb R}$ and the following transformations leave the set of solutions of \eqref{Eomega} invariant:
$$\begin{cases}
& \psi(X,t) \mapsto \psi (GX,t)\,, \\
& U(X,t) \mapsto G U (GX,t)\,, \\
& \Omega(X,t) \mapsto \Omega(GX,t)\,,
\end{cases}\qquad X \in {\mathbb S}^2\,.$$
Note that the non-abelian subgroup of $\mathbb{O}(3)$ of all orthogonal $3 \times 3$ real matrices $R$ with $\text{det}(R)=1$, itself a compact Lie group of 
dimension 3, is called the rotation group $\mathbb{SO}(3)$ since each transformation $X \mapsto RX$ with $R \in \mathbb{SO}(3)$ 
can be obtained by first choosing a fixed direction through the origin and subsequently rotating the coordinate system through a suitable angle about this direction as an axis (see \cite{richt} and the Appendix).

\subsection{Conservation laws} The identification of integrals of motion provides insight into the flow dynamics.

\begin{proposition}[Integrals of motion for a fixed sphere]
\label{conserved}
The following quantities are conserved by smooth solutions of the vorticity equation~(${\mathcal E}_0$):
\begin{itemize}
\item[(i)] (kinetic energy) $\displaystyle\frac{1}{2}\iint_{{\mathbb S}^2} |U|^2 \,{\rm d}\sigma$,
\item[(ii)] (Casimir invariants) $\displaystyle\iint_{{\mathbb S}^2}  F(\Omega)\,{\rm d}\sigma$ for any differentiable function $F$,
\item[(iii)] (first eigenspace of the Laplace-Beltrami operator) the vorticity components in the direction of each of the three spherical harmonics of 
degree $1$.
\end{itemize}
\end{proposition}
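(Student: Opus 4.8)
The plan is to reduce all three conservation laws to two ``workhorse'' identities for the divergence-free field $U = J\operatorname{grad}\psi$. First, since $\operatorname{div}U = 0$ and $\mathbb{S}^2$ has no boundary, for smooth scalars $g,h$ one has $\iint_{\mathbb{S}^2} g\,\nabla_U h\,{\rm d}\sigma = -\iint_{\mathbb{S}^2} h\,\nabla_U g\,{\rm d}\sigma$, obtained by writing $g\,\nabla_U h + h\,\nabla_U g = \operatorname{div}(gh\,U)$ and applying the divergence theorem; equivalently, in terms of the Poisson bracket \eqref{pobr}, the expression $\iint_{\mathbb{S}^2} f\{g,h\}\,{\rm d}\sigma$ is invariant under cyclic permutations of $(f,g,h)$. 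Second, I would repeatedly invoke the self-adjointness of $\Delta$. Throughout, I work with the vorticity form \eqref{ertel}, which for $\omega=0$ reads $\partial_t\Omega + \nabla_U\Omega = 0$, and with the relation $\nabla_U f = \{\psi,f\}$ that follows directly from \eqref{uv}.

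For the Casimir invariants (ii), the chain rule gives $D_t F(\Omega) = F'(\Omega)\,D_t\Omega = 0$, so $\partial_t F(\Omega) = -\nabla_U F(\Omega)$; integrating and noting that the advection term $\iint \nabla_U F(\Omega)\,{\rm d}\sigma = \iint \operatorname{div}(F(\Omega)\,U)\,{\rm d}\sigma$ vanishes yields $\frac{d}{dt}\iint F(\Omega)\,{\rm d}\sigma = 0$. For the kinetic energy (i), since $J$ is an isometry we have $|U|^2 = |\operatorname{grad}\psi|^2$, so Green's identity gives $E = -\frac12\iint \psi\,\Omega\,{\rm d}\sigma$. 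Differentiating and using self-adjointness of $\Delta$ to equate the two resulting terms, I get $\frac{dE}{dt} = -\iint \psi\,\partial_t\Omega\,{\rm d}\sigma = \iint \psi\,\nabla_U\Omega\,{\rm d}\sigma$. The antisymmetry identity converts this into $-\iint \Omega\,\nabla_U\psi\,{\rm d}\sigma$, and $\nabla_U\psi = U\cdot\operatorname{grad}\psi = (J\operatorname{grad}\psi)\cdot\operatorname{grad}\psi = 0$ because $J$ is a rotation by a right angle; hence $\frac{dE}{dt}=0$.

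Part (iii) is the crux. Writing $Q_m(t) = \langle \Omega, Y_1^m\rangle$, the Ertel equation and the antisymmetry identity give $\dot Q_m = \iint \partial_t\Omega\,\overline{Y_1^m}\,{\rm d}\sigma = \iint \Omega\,\nabla_U\overline{Y_1^m}\,{\rm d}\sigma = \iint \Omega\,\{\psi,\overline{Y_1^m}\}\,{\rm d}\sigma$. The decisive structural input is that each degree-$1$ harmonic is the restriction of a linear function $\ell = \mathbf{a}\cdot X$, whose Hamiltonian vector field $\{\ell,\cdot\}$ is exactly the generator of rotation about the axis $\mathbf{a}$, i.e. a Killing field; consequently it commutes with the Laplace--Beltrami operator, $\{\ell,\Delta\psi\} = \Delta\{\ell,\psi\}$. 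Applying the cyclic property to move $\Omega=\Delta\psi$ onto $\psi$, then this commutation, then self-adjointness of $\Delta$, transforms $\dot Q_m$ into $\iint \Omega\,\{\overline{Y_1^m},\psi\}\,{\rm d}\sigma = -\dot Q_m$, whence $\dot Q_m = 0$. (Conceptually, these three quantities are the components of the angular momentum $\iint (\mathbf{a}\cdot X)\,\Omega\,{\rm d}\sigma$, conserved as the Noether current of the $\mathbb{SO}(3)$ rotational invariance recorded in Section~\ref{subsectionsymmetries}.)

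I expect the only genuine obstacle to be the commutation $\{\ell,\Delta\psi\} = \Delta\{\ell,\psi\}$ underlying (iii): it holds precisely because the Hamiltonian flow of a first-degree harmonic is an isometry, and it is exactly what singles out the first eigenspace, since the analogous computation with a higher-degree harmonic fails (its Hamiltonian flow is not a rotation). A secondary, purely technical point is to confirm that none of the integration-by-parts steps produce boundary contributions; for smooth solutions this is guaranteed by the closedness of $\mathbb{S}^2$, together with periodicity in $\varphi$ and the vanishing \eqref{bu} that tames the coordinate singularities at the poles.
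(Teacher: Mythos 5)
Your proof is correct, but for the two nontrivial items it follows a genuinely different route from the paper's, which argues at the level of the velocity equation \eqref{eqU}: there, (i) is obtained by pairing the momentum equation with $U$, the Coriolis term dying by antisymmetry of $J$ and the pressure term by $\operatorname{div} U=0$; and (iii) is first reduced, via $\mathbb{SO}(3)$-invariance, to the single quantity $\iint_{{\mathbb S}^2}\Omega\sin\theta\,{\rm d}\sigma$, which is rewritten as the angular momentum $\iint_{{\mathbb S}^2} U\cdot\mathbf{e}_\varphi\cos\theta\,{\rm d}\sigma$ and differentiated using \eqref{eqU}, the decisive input being the covariant-derivative identity $\nabla_U(\cos\theta\,\mathbf{e}_\varphi)=JU\sin\theta$. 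You never touch the pressure: everything happens at the vorticity/stream-function level with the transport form \eqref{ertel} and two integral identities (antisymmetry of $\nabla_U$ under $\iint\cdot\,{\rm d}\sigma$, self-adjointness of $\Delta$), and your key lemma for (iii) is that the Hamiltonian vector field of a first-degree harmonic $\ell=\mathbf{a}\cdot X$ is the rotation Killing field about $\mathbf{a}$, whence $\{\ell,\Delta f\}=\Delta\{\ell,f\}$ and the symmetrization $\dot Q_m=-\dot Q_m$. The conventions do check out: with \eqref{pobr} and \eqref{uv} one has $\nabla_U f=\{\psi,f\}$, and for $\ell=\sin\theta$ the bracket $\{\ell,\cdot\}=-\partial_\varphi$ visibly commutes with $\Delta$, the general axis following by rotation; the cyclic invariance of $\iint f\{g,h\}\,{\rm d}\sigma$ is legitimate on the closed sphere. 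The trade-off: the paper's computation is elementary and self-contained, needing only the covariant-derivative table of Section~\ref{sectionstructure}, whereas yours requires the Killing-field/Laplacian commutation but buys conceptual clarity — it exhibits the three invariants as a single $\mathbb{SO}(3)$ Noether family, dispenses with the rotation-reduction step, and makes transparent why the mechanism stops at degree $1$ (the Hamiltonian flow of a higher-degree harmonic is not an isometry, so the commutation fails). Your treatments of (i), via $E=-\tfrac{1}{2}\iint_{{\mathbb S}^2}\psi\,\Omega\,{\rm d}\sigma$ together with $\nabla_U\psi=0$, and of (ii), via $\nabla_U F(\Omega)=\operatorname{div}(F(\Omega)U)$, are harmless variants of the paper's arguments (the paper derives (ii) from area-preservation of the flow map, which is the Lagrangian phrasing of the same fact).
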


\begin{proof}
(i) Taking the time derivative of the energy gives, with the help of~\eqref{eqU},
$$
\frac{d}{dt}\Big(  \frac{1}{2}\iint_{{\mathbb S}^2} |U|^2 \,{\rm d}\sigma \Big)= \iint_{{\mathbb S}^2} [- \nabla_U U - 2 \omega \sin \theta J U - \operatorname{grad} p ] \cdot U \,{\rm d}\sigma\,.
$$
It is immediate to see that the second term on the right side vanishes (due to the antisymmetry of $J$), and the third term as well 
(since $U$ is divergence-free). But the first term on the right side is also zero since
$$
\iint_{{\mathbb S}^2} \nabla_U U \cdot U\,{\rm d}\sigma = \iint_{{\mathbb S}^2} U \cdot \operatorname{grad} \frac{|U|^2}{2} \,{\rm d}\sigma = -  
\iint_{{\mathbb S}^2} \operatorname{div} U \, \frac{|U|^2}{2} \,{\rm d}\sigma = 0\,.
$$

\smallskip

\noindent 
(ii) This is an immediate consequence of the vorticity equation \eqref{ertel}, due to a simple change of variables in phase-space 
since the flow-map is area preserving ($U$ being divergence-free). Note that these Casimir functionals reflect the underlying noncanonical 
Hamiltonian structure induced by \eqref{pobr}: their Poisson bracket with any other functional vanishes.\medskip

\noindent
(iii) Since the spherical harmonics $Y_1^{\pm 1}$ can be obtained from $Y_1^0$ by a rotation, due to the 
invariance of the vorticity equation under the action of $\mathbb{SO}(3)$, it suffices to show that 
$\iint_{{\mathbb S}^2} \Omega \sin \theta \,{\rm d}\sigma$ is conserved. Since $\operatorname{grad} \sin \theta = \cos \theta \, \mathbf{e}_\varphi$, an integration by parts reveals that
$$
\iint_{{\mathbb S}^2} \Omega \sin \theta \,{\rm d}\sigma = - \iint_{{\mathbb S}^2} \operatorname{div} JU \sin \theta \,{\rm d}\sigma 
= \iint_{{\mathbb S}^2} U \cdot \mathbf{e}_\varphi \cos \theta\,{\rm d}\sigma\,.
$$
Taking the time derivative of this quantity and using the velocity evolution equation~\eqref{eqU} gives
$$
\frac{d}{dt} \iint_{{\mathbb S}^2} U \cdot \mathbf{e}_\varphi \cos \theta \,{\rm d}\sigma = \iint_{{\mathbb S}^2} \left[ -\nabla_U U - 2\omega \sin \theta J U - \operatorname{grad} p \right] \cdot \mathbf{e}_\varphi \cos \theta \,{\rm d}\sigma\,.
$$
The third term on the above right-hand side is zero, since $\operatorname{div} (\cos \theta \, \mathbf{e}_\varphi ) =0$. The second term is also zero, as can be seen by using the defition of $U$ in terms of $\psi$, and the fact that $\operatorname{div} (\sin \theta \cos \theta \, \mathbf{e}_\varphi ) =0$:
$$
\iint_{{\mathbb S}^2} \sin \theta JU \cdot \mathbf{e}_\varphi \, \cos \theta \,{\rm d}\sigma = \iint_{{\mathbb S}^2} \operatorname{grad} \psi \cdot (\sin \theta \cos \theta \, \mathbf{e}_\varphi ) \,{\rm d}\sigma = 0\,.
$$
Finally, using the identity $\nabla_U (\cos \theta \, \mathbf{e}_\varphi) = JU  \sin \theta$, the fact that $U$ is divergence-free, and the antisymmetry of $J$, one obtains 
$$
\iint_{{\mathbb S}^2} \nabla_U U \cdot  \mathbf{e}_\varphi \cos \theta \,{\rm d}\sigma 
= - \iint_{{\mathbb S}^2} U \cdot \nabla_U (\mathbf{e}_\varphi \cos \theta) \,{\rm d}\sigma 
= - \iint_{{\mathbb S}^2} U \cdot JU \sin \theta \,{\rm d}\sigma = 0\,,
$$
so that the third term vanishes as well.\end{proof}

Due to the transformation~\eqref{changeofframe}, Proposition \ref{conserved} gives invariants for~\eqref{Eomega} with $\omega \neq 0$.

\begin{corollary}[Integrals of motion for a rotating sphere]\label{iom} 
The following quantities are constant in time for smooth solutions of~\eqref{Eomega} with $\omega \neq 0$:
\begin{itemize}
\item[(i)] (kinetic energy) $\displaystyle \int |U|^2 \,dx$,
\item[(ii)] (Casimir invariants) $\displaystyle \int  F(\Omega + 2\omega \sin \theta)\,dx$ for any differentiable function $F$,
\item[(iii)] (first eigenspace of the Laplace-Beltrami operator) ${\rm e}^{{\rm i}m\omega t} c_1^m(t)$ for the coefficients
$$c_1^m=  \iint_{{\mathbb S}^2} \Omega\, \overline{Y_1^j} \,{\rm d}\sigma\,,\qquad m \in \{-1,0,1\}\,,$$
of the $L^2({\mathbb S}^2)$-expansion of $\Omega$ in terms of the spherical harmonics $\{Y_j^m\}_{j \ge 1,\,|m| \le j}$. In particular, the real 
number $c_1^0(t)$ and the absolute values of the complex numbers $c_1^{\pm 1}(t)$ are flow-invariants. 
\end{itemize}
\end{corollary}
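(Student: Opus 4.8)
The plan is to exploit the change-of-frame transformation \eqref{changeofframe}, which reduces every assertion to the corresponding part of Proposition~\ref{conserved} for a fixed sphere. Given a solution $\psi_\omega$ of \eqref{Eomega}, I would introduce the fixed-frame stream function $\psi_0(\varphi,\theta,t) = \psi_\omega(\varphi-\omega t,\theta,t) - \omega\sin\theta$, which solves $(\mathcal E_0)$ by \eqref{changeofframe}. Since rotation about the polar axis ($\varphi\mapsto\varphi-\omega t$) is an isometry commuting with $\Delta$, and since $\Delta\sin\theta = -2\sin\theta$, the two vorticities are related by
\begin{equation*}
\Omega_0(\varphi,\theta,t) = \Omega_\omega(\varphi-\omega t,\theta,t) + 2\omega\sin\theta.
\end{equation*}
All three conservation statements then follow by inserting this identity into the invariants of Proposition~\ref{conserved} and changing variables back.

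For part (ii), applying Proposition~\ref{conserved}(ii) to $\psi_0$ shows that $\iint_{\mathbb S^2}F(\Omega_0)\,{\rm d}\sigma$ is constant; substituting the vorticity identity and performing the measure-preserving change of variable $\varphi\mapsto\varphi+\omega t$ (under which $\sin\theta$ is unchanged) turns this into $\iint_{\mathbb S^2}F(\Omega_\omega+2\omega\sin\theta)\,{\rm d}\sigma$, as claimed. I would note that this also follows directly from the potential-vorticity form \eqref{ertel}, since $\Omega_\omega+2\omega\sin\theta$ is materially transported by the area-preserving flow. For part (i), rather than transform the energy (which produces cross terms), I would simply observe that the proof of Proposition~\ref{conserved}(i) already uses the full velocity equation \eqref{eqU} with $\omega\neq0$: the Coriolis term $2\omega\sin\theta\,JU$ is pointwise orthogonal to $U$ by the antisymmetry of $J$, so that computation goes through verbatim for every $\omega$.

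The genuinely new ingredient is part (iii), which is where I expect the main bookkeeping. Proposition~\ref{conserved}(iii) gives that each $d_1^m := \iint_{\mathbb S^2}\Omega_0\,\overline{Y_1^m}\,{\rm d}\sigma$ is conserved for the fixed sphere. Inserting the vorticity identity and changing variables $\varphi\mapsto\varphi+\omega t$, the additive term $2\omega\sin\theta$ (being proportional to $Y_1^0$) contributes, by orthonormality, only when $m=0$ and is time-independent, while the transported part becomes $\iint_{\mathbb S^2}\Omega_\omega(\varphi,\theta,t)\,\overline{Y_1^m(\varphi+\omega t,\theta)}\,{\rm d}\sigma$. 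Here I would use that the $\varphi$-dependence of $Y_1^m$ is ${\rm e}^{{\rm i}m\varphi}$, so the rotation produces a pure phase, $Y_1^m(\varphi+\omega t,\theta) = {\rm e}^{{\rm i}m\omega t}\,Y_1^m(\varphi,\theta)$, which pulls out exactly the factor ${\rm e}^{-{\rm i}m\omega t}$ multiplying $c_1^m(t)$.

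Conservation of $d_1^m$ then forces ${\rm e}^{-{\rm i}m\omega t}c_1^m(t)$ to be constant (the sign of the exponent being a matter of the sign conventions adopted for the spherical harmonics and for the rotation), from which the robust conclusions follow: for $m=0$ the phase is trivial so $c_1^0(t)$ itself is conserved, and for $m=\pm1$ taking absolute values gives that $|c_1^{\pm1}(t)|$ is conserved. The main obstacle is purely this phase accounting — correctly separating the materially transported part of the vorticity, which rotates rigidly and only shifts the argument of $Y_1^m$, from the static background $2\omega\sin\theta$, and using orthonormality of the $\{Y_1^m\}$ to discard all cross-contributions.
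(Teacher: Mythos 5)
Your proposal is correct and follows essentially the same route as the paper: reduce to Proposition~\ref{conserved} via the change-of-frame transformation \eqref{changeofframe}, use the ${\rm e}^{{\rm i}m\varphi}$-dependence of $Y_1^m$ so the rigid rotation contributes only a phase, and use orthonormality to confine the static $2\omega\sin\theta$ (proportional to $Y_1^0$) contribution to the $m=0$ mode — the paper declares (i)--(ii) ``rather straightforward'' and your direct treatments of them (the Coriolis term $2\omega\sin\theta\,JU$ being pointwise orthogonal to $U$ for (i), material transport of $\Omega+2\omega\sin\theta$ by an area-preserving flow for (ii)) are exactly the intended arguments. Your hedge about the sign of the exponent is warranted: tracking \eqref{changeofframe} literally yields ${\rm e}^{-{\rm i}m\omega t}c_1^m(t)$ as the conserved quantity, as you found (consistent with the degree-one wave retrogressing at speed $\omega$ in the rotating frame), whereas the paper's displayed chain of equalities contains a compensating sign slip to arrive at ${\rm e}^{+{\rm i}m\omega t}c_1^m(t)$; the invariance of $c_1^0(t)$ and of $|c_1^{\pm1}(t)|$ is unaffected either way.
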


\begin{proof}
The proof of (i)-(ii) being rather straightforward, we only discuss that of (iii). Since $\sin\theta=2\sqrt{\frac{\pi}{3}}\,Y_1^0(\theta)$, using 
\eqref{changeofframe} and the explicit dependence of the spherical harmonics $Y_1^m$ on the longitude angle $\varphi$, for a solution 
$\psi_\omega(\varphi,\theta,t)$ of \eqref{Eomega} we get
\begin{align*}
{\rm e}^{{\rm i}m\omega t} c_1^m(t) &= {\rm e}^{{\rm i}m\omega t} \iint_{{\mathbb S}^2}  \psi_\omega(\cdot,\cdot,t)\,\overline{Y_1^m}\,{\rm d}\sigma\\
&= {\rm e}^{{\rm i}m\omega t}  \iint_{{\mathbb S}^2}  \psi_0(\cdot + \omega t,\cdot,t)\,\overline{Y_1^m}\,{\rm d}\sigma 
+ 2\omega\sqrt{\tfrac{\pi}{3}}\,{\rm e}^{{\rm i}m\omega t} \iint_{{\mathbb S}^2}  Y_1^0\,\overline{Y_1^m}\,{\rm d}\sigma \\
&= \iint_{{\mathbb S}^2}  \psi_0(\cdot,\cdot,t)\,\overline{Y_1^m}\,{\rm d}\sigma 
+ 2\omega\sqrt{\tfrac{\pi}{3}}\,{\rm e}^{{\rm i}m\omega t} \iint_{{\mathbb S}^2}  Y_1^0\,\overline{Y_1^m}\,{\rm d}\sigma
\end{align*}
and we can conclude by Proposition \ref{conserved}, the spherical harmonics being orthonormal.\end{proof}

\subsection{Stationary solutions} Stationary solutions of~\eqref{Eomega} satisfy
\begin{equation}
\label{ellipticpsi0}
\left[ -\partial_\theta \psi\, \partial_\varphi + \partial_\varphi \psi \,\partial_\theta \right] \left(\Delta \psi + 2\omega\sin\theta\right) = 0
\end{equation}
In view of the transformation~\eqref{changeofframe}, these solutions, which are stationary for a fixed $\omega$, correspond to uniform rotation around the polar axis for other values of $\omega$.

Geometrically, \eqref{ellipticpsi0} means that the gradients of the stream function $\psi$ and of the potential vorticity $\Delta \psi + 2\omega\sin\theta$ 
are parallel. Since the gradient is orthogonal to the level set, in regions of ${\mathbb S}^2$ where $\text{grad} \,\psi \neq (0,0)$ the rank theorem ensures that 
 \eqref{ellipticpsi0} is locally equivalent to the elliptic problem \eqref{ellipticpsi}, namely 
 $$\Delta \psi + 2\omega\sin\theta=F(\psi)$$
holds for some $C^1$-function $F$; see the discussion in \cite{cj1}. It is easy to check that any solution of the elliptic 
problem~\eqref{ellipticpsi} on ${\mathbb S}^2$ will also solve~\eqref{ellipticpsi0}, but the converse is not true in general. 
For example, any zonal function $\psi$ solves ~\eqref{ellipticpsi0} but does not 
have to be a solution of~\eqref{ellipticpsi}, as shown by the case of constant functions.

Two classes of explicit solutions of \eqref{ellipticpsi0} are known:
\begin{itemize}
\item zonal solutions $\psi(\theta)$;
\item Rossby-Haurwitz waves of the form
\begin{equation}\label{srhw}
\psi(\varphi,\theta)= \tfrac{2\omega}{2-j(j+1)}\,\sin\theta + \beta \,Y(\varphi,\theta)\,,\qquad j \ge 2\,,\quad \beta \in {\mathbb R}\,,\quad Y \in E_j \,,
\end{equation}
where $E_j$ is the $(2j+1)$-dimensional eigenspace of the Laplace-Beltrami operator associated to the eigenvalue $-j(j+1)$. These are solutions 
of \eqref{ellipticpsi0} for $F(s)=-j(j+1)s$.
\end{itemize}
It is easy to verify that functions of this type solve \eqref{ellipticpsi0}. Using the symmetry~\eqref{changeofframe}, one obtains explicit non-trivial 
travelling-wave solutions of \eqref{Eomega} of the form
\begin{equation}\label{eotw}
\psi(\varphi-ct,\theta)=\alpha\,\sin\theta + \beta\,Y(\varphi -ct,\theta) \quad\text{with}\quad 
\begin{cases}
&Y \in E_j \ (j \ge 1)\,,\qquad \quad \alpha \in {\mathbb R}\,,\quad \beta \in {\mathbb R} \setminus \{0\}\,,\\
&c=\frac{2\omega}{j(j+1)} + \alpha\,\frac{j(j+1)-2}{j(j+1)}  \,,
 \end{cases}
\end{equation}
as one can easily check. Two particular cases are of great interest:
\begin{itemize}
\item for $\alpha=\tfrac{2\omega}{2-j(j+1)}$ with $j \ge 2$ we obtain the stationary waves \eqref{srhw} with wave speed $c=0$;
\item for $\alpha=\omega$ we obtain geostationary waves that propagate azimuthally with wave speed $c=\omega$, which can be subsumed into 3D stratospheric flows 
(see Section 6).
\end{itemize}
Some other explicit solutions are listed in the next section and an approach providing us with further classes of (non-explicit) stationary solutions is provided in Section 6.

\section{Rigidity results}

\label{sectionrigidity}

Due to the important role played by~\eqref{ellipticpsi} in the quest of stationary solutions of the vorticity equation~\eqref{Eomega}, and given the 
accessibility of symmetries for ${\mathbb S}^2$, it is natural to expect some concrete answers to the question about solutions 
capturing symmetries of the spherical domain. 

\begin{theorem} \label{theo1} Consider 
classical solutions $\psi$ of~\eqref{ellipticpsi}, for a continuously differentiable function $F: {\mathbb R} \to {\mathbb R}$.
\begin{itemize}
\item[(i)] If $\omega=0$ and $F'>-2$, then $\psi$ is a constant. This statement is optimal since the non-constant elements of the first eigenspace 
$\mathbb{E}_1$ of the Laplace-Beltrami operator satisfy $\Delta \psi = - 2 \psi$.
\item[(ii)] If $\omega=0$ and $F'>-6$, then $\psi$ is zonal up to a transformation in $\mathbb{SO}(3)$. This statement is optimal since the non-zonal elements 
of the second eigenspace $\mathbb{E}_2$ of the Laplace-Beltrami operator satisfy $\Delta \psi = - 6 \psi$.
\item[(iii)] For $\omega \neq 0$ there exists $\alpha \in \mathbb{R}$ such that $\mathbb{P}_2 \psi = \alpha \sin \theta$.
\item[(iv)] If $\omega \neq 0$ and $F'>-6$, then $\psi$ is zonal up to a transformation in $\mathbb{SO}(3)$. This statement is optimal since the 
Rossby-Haurwitz wave \eqref{srhw} with $j=2$ and $Y$ a non-trivial linear combination of all spherical harmonics $\{Y_2^m\}_{|m| \le 2}$ 
satisfies $\Delta\psi = -6 \psi$.
\end{itemize}
\end{theorem}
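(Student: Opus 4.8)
The plan is to handle all four assertions with one mechanism: linearize \eqref{ellipticpsi} along the three rotational Killing fields of $\mathbb{S}^2$ and play the resulting identities against the spectral gaps of $-\Delta$. Let $K_1,K_2,K_3$ generate the rotations about the three coordinate axes (so $K_3=\partial_\varphi$) and set $u_i=K_i\psi$; each has zero mean, being a Killing derivative integrated over a closed surface. Differentiating $\Delta\psi=F(\psi)-2\omega\sin\theta$ along $K_i$ and using $[\Delta,K_i]=0$ gives $\Delta u_i=F'(\psi)u_i-2\omega K_i\sin\theta$. Since $K_3\sin\theta=0$ while $K_1\sin\theta,K_2\sin\theta\in\mathbb{E}_1$, the field $u_3$ always solves the homogeneous linearization $\Delta u_3=F'(\psi)u_3$, and when $\omega=0$ so do $u_1,u_2$. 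I would then work with the quadratic form
$$Q(u)=\iint_{\mathbb{S}^2}\big(|\nabla u|^2+F'(\psi)\,u^2\big)\,{\rm d}\sigma$$
and the model form $B(u)=\iint_{\mathbb{S}^2}|\nabla u|^2\,{\rm d}\sigma-6\iint_{\mathbb{S}^2}u^2\,{\rm d}\sigma=\sum_{j\ge1}\big(j(j+1)-6\big)\,\|\mathbb{P}_{\mathbb{E}_j}u\|^2$, which is negative definite on $\mathbb{E}_1$, vanishes on $\mathbb{E}_2$, and is positive on the higher modes. One integration by parts shows $Q(u_i,u_j)=0$ whenever $u_i,u_j$ solve the homogeneous equation; for $\omega=0$ this means $Q$ vanishes identically on $V:=\mathrm{span}(u_1,u_2,u_3)$.

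Part (i) is a warm-up: for mean-zero $u$ the first gap gives $\int|\nabla u|^2\ge2\int u^2$, so $Q(u)\ge\int(F'(\psi)+2)u^2>0$ for $u\not\equiv0$ when $F'>-2$; since $Q(u_i)=0$ this forces every $u_i\equiv0$, and as the $K_i$ span each tangent space, $\psi$ is constant. Part (ii) is the heart of the matter and the place where $-6$ appears. When $F'>-6$ one has $Q(u)>B(u)$ for $u\not\equiv0$, so the vanishing of $Q$ on $V$ forces $B$ to be negative definite on $V$; as any $v$ with $\mathbb{P}_2v=0$ has $B(v)\ge0$, the projection $\mathbb{P}_2$ is injective on $V$. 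If the $u_i$ were independent, the vectors $w_i:=\mathbb{P}_2u_i=K_i(\mathbb{P}_2\psi)$ would therefore be a basis of $\mathbb{E}_1$. Identifying $\mathbb{E}_1$ with $\mathbb{R}^3$ through the coordinate functions and writing $\mathbb{P}_2\psi\leftrightarrow p$, the rotational action gives $w_i=e_i\times p$ up to sign — and the crux is the elementary fact that $e_1\times p,e_2\times p,e_3\times p$ all lie in $p^\perp$ and hence are dependent. This contradiction makes $u_1,u_2,u_3$ linearly dependent, so some $\sum a_iK_i$ annihilates $\psi$; rotating the axis $a$ to the pole exhibits $\psi$ as zonal modulo $\mathbb{SO}(3)$.

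For the rotating case I would first prove (iii) from the conservation law of Corollary~\ref{iom}(iii): the coefficients $c_1^{\pm1}(t)$ of $\Omega=\Delta\psi$ satisfy that $e^{\pm i\omega t}c_1^{\pm1}(t)$ is constant, while stationarity makes $c_1^{\pm1}$ constant in time; for $\omega\ne0$ both must then vanish, and since $-\Delta=2$ on $\mathbb{E}_1$ the $Y_1^{\pm1}$-components of $\psi$ vanish too, i.e. $\mathbb{P}_2\psi=\alpha\sin\theta$. Part (iv) then needs only the single homogeneous field $u_3$: by (iii), $\mathbb{P}_2u_3=K_3(\mathbb{P}_2\psi)=\partial_\varphi(\alpha\sin\theta)=0$, so $B(u_3)=\sum_{j\ge2}(j(j+1)-6)\|\mathbb{P}_{\mathbb{E}_j}u_3\|^2\ge0$, whereas $Q(u_3)=0$ and $F'>-6$ give $B(u_3)=-\int(F'(\psi)+6)u_3^2\le0$. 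Hence $B(u_3)=0$, forcing $\int(F'(\psi)+6)u_3^2=0$ and thus $u_3\equiv0$, so $\psi$ is independent of $\varphi$.

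The main obstacle is conceptual rather than computational: isolating the structural reason for the jump from $-2$ to $-6$. Everything rests on the cross-product identity in (ii) — that the orbit of the $\mathbb{E}_1$-part of $\psi$ under the three rotation generators spans at most a $2$-plane — together with the observation that for $\omega\ne0$ the symmetry is broken to a single Killing field, which is why (iv) must be routed through the conservation law (iii) rather than through the three-field argument. The only genuinely technical point is to confirm that classical solutions of \eqref{ellipticpsi} are regular enough (here $C^1$ fields with $H^1$ control) to license the integrations by parts and the $L^2$ eigenfunction expansions, which follows from elliptic regularity.
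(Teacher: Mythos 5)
Your proof is correct, and on the two substantive parts it takes a genuinely different route from the paper's. For part (ii), the paper first uses the $\mathbb{O}(3)$-invariance to rotate the degree-one component of $\psi$ onto the polar axis, after which a single identity for $\partial_\varphi\psi$ --- exactly your $Q(u_3)=0$ played against the spectral gap at $6$ --- finishes the argument; you instead keep all three Killing derivatives $u_i=K_i\psi$, show $Q$ vanishes on their span $V$, deduce from $F'>-6$ that the degree-one projection is injective on $V$, and rule out independence via the elementary fact that $e_1\times p,\,e_2\times p,\,e_3\times p$ lie in $p^\perp$ and so span at most a $2$-plane. The paper's normalization is shorter; your version isolates the structural reason the threshold jumps from $-2$ to $-6$ (the rank of the $\mathbb{E}_1$-orbit under the rotation algebra is at most two) and packages (i), (ii), (iv) under one quadratic-form mechanism. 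For part (iii) the paper argues directly at the level of the elliptic equation: it pairs the stationary vorticity equation with ${\rm e}^{{\rm i}\varphi}$, verifies by repeated integration by parts that the nonlinear contributions cancel ($I+II+III=0$), and is left with $-2{\rm i}\omega\iint\psi\,{\rm e}^{{\rm i}\varphi}\cos^2\theta\,{\rm d}\varphi\,{\rm d}\theta=0$; you instead invoke Corollary~\ref{iom}(iii), noting that stationarity makes $c_1^{\pm1}$ constant while the conservation law multiplies them by ${\rm e}^{\pm{\rm i}\omega t}$, forcing $c_1^{\pm1}=0$ when $\omega\neq0$, and then pass from $\Omega$ to $\psi$ via $\Delta=-2$ on $\mathbb{E}_1$. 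This is legitimate and not circular --- the corollary rests only on Proposition~\ref{conserved} and the change-of-frame symmetry --- and it explains the cancellation the paper checks by hand as an instance of the $\mathbb{E}_1$ conservation law, at the price of routing an elliptic statement through the time-dependent problem. Parts (i) and (iv) coincide in substance with the paper's: differentiate the equation (along Killing fields resp.\ in $\varphi$), pair with the derivative, and use the gaps at $2$ resp.\ $6$ together with $\mathbb{P}_{\mathbb{E}_1}\partial_\varphi\psi=0$ from (iii). One cosmetic remark: following the theorem's statement you write $\mathbb{P}_2$ for the projection onto the degree-one harmonics (eigenvalue $-2$, i.e.\ counting $0$ as the first eigenvalue), which clashes with the appendix convention where $\mathbb{P}_j$ projects onto $\mathbb{E}_j$; you use it consistently and your explicit $\mathbb{P}_{\mathbb{E}_j}$ notation disambiguates, but the discrepancy is worth flagging, as is the (routine) elliptic bootstrap you correctly noted is needed to justify differentiating the equation and integrating by parts.
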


\begin{proof} (i) Pairing~\eqref{ellipticpsi} for $\omega=0$ with $\Delta \psi$ and integrating by parts leads to the identity
$$
\iint_{{\mathbb S}^2} |\Delta \psi|^2 \,{\rm d}\sigma = - \iint_{{\mathbb S}^2} F'(\psi) |\operatorname{grad} \psi|^2 \,{\rm d}\sigma\,.
$$
Assuming that $F'>2$, the fact that the smallest nonzero eigenvalue of $-\Delta$ is $2$ gives
$$
\iint_{{\mathbb S}^2} |\Delta \psi|^2 \,{\rm d}\sigma < 2 \iint_{{\mathbb S}^2} |\operatorname{grad} \psi|^2 \,{\rm d}\sigma 
\leq \iint_{{\mathbb S}^2} |\Delta \psi|^2\,{\rm d}\sigma \,
$$
which is a contradiction unless $\Delta \psi =0$, which forces $|\operatorname{grad} \psi|=0$.

\medskip

\noindent
(ii) Up to the action of $\mathbb{O}(3)$, we can assume that $\mathbb{P}_2 \psi = \alpha \sin \theta$ for some $\alpha \in \mathbb{R}$. Differentiating~\eqref{ellipticpsi} for $\omega=0$ with respect to $\varphi$, and pairing the result with $\partial_\varphi \psi$ leads to the identity
$$
\iint_{{\mathbb S}^2} |\nabla \partial_\varphi \psi|^2 \,{\rm d}\sigma = - \iint_{{\mathbb S}^2} F'(\psi) |\partial_\varphi \psi|^2 \,{\rm d}\sigma\,.
$$
Under the assumption that $F'>-6$, this implies that 
$$
\iint_{{\mathbb S}^2} |\nabla \partial_\varphi \psi|^2 \,{\rm d}\sigma < 6  \iint_{{\mathbb S}^2} |\partial_\varphi \psi|^2 \,{\rm d}\sigma\,,
$$
unless $\partial_\varphi \psi \equiv 0$. Since $\partial_\varphi \mathbb{P}_2 \psi =0$, and $\partial_\varphi$ commutes with the spectral projectors of the Laplacian, this gives $\mathbb{P}_2\partial_\varphi \psi =0$. Therefore, we can use the fact that the third eigenvalue of $-\Delta$ is $6$ to conclude that
$$
\iint_{{\mathbb S}^2} |\Delta \partial_\varphi \psi|^2 \,{\rm d}\sigma 
< 6  \iint_{{\mathbb S}^2} |\partial_\varphi \psi|^2 \,dx \leq \int |\Delta \partial_\varphi \psi|^2 \,{\rm d}\sigma\,,
$$
which is a contradiction unless $\partial_\varphi \psi \equiv 0$.

\medskip

\noindent
(iii) If $\psi$ solves~\eqref{ellipticpsi}, then
$$
\left[ -\partial_\theta \psi \partial_\varphi + \partial_\varphi \psi \partial_\theta \right] \left(\Delta \psi + 2\omega\sin\theta\right) = 0.
$$
Multiplying the above by by ${\rm e}^{{\rm i}\varphi}$ and integrating the result on the sphere, we get
$$
0 = \int_{-\frac{\pi}{2}}^{\frac{\pi}{2}}\int_0^{2\pi} {\rm e}^{{\rm i} \varphi}  (-\partial_\theta \psi \,\partial_\varphi + \partial_\varphi \psi \,\partial_\theta) \left[ \partial_\theta^2 \psi - \tan \theta \, \partial_\theta \psi + \frac{1}{(\cos \theta)^2}\, \partial_\varphi^2 \psi  + 2 \omega \sin \theta \right] \cos \theta\, {\rm d\varphi} {\rm d}\theta= I + II + III + IV,
$$
where $I$, $II$, $III$ and $IV$ correspond to the four terms in the bracketed expression. Integrating by parts repeatedly, and using the fact that all boundary terms vanish since $\cos\theta=\partial_\varphi \psi=0$ at $\theta= \pm \frac{\pi}{2}$, we obtain 
\begin{align*}
& I = \int_{-\frac{\pi}{2}}^{\frac{\pi}{2}}\int_0^{2\pi} {\rm e}^{{\rm i}\varphi}\Big( {\rm i} \sin \theta (\partial_\theta \psi)^2 - \cos \theta \, \partial_\varphi \psi \,\partial_\theta \psi \Big) \, {\rm d\varphi} {\rm d}\theta\,, \\
& II = \int_{-\frac{\pi}{2}}^{\frac{\pi}{2}}\int_0^{2\pi} {\rm e}^{{\rm i}\varphi}\left( - {\rm i} \sin \theta (\partial_\theta \psi)^2 - \frac{(\sin \theta)^2}{\cos \theta} \, \partial_\varphi \psi \,\partial_\theta \psi \right) \, {\rm d\varphi} {\rm d}\theta \\
& III = \int_{-\frac{\pi}{2}}^{\frac{\pi}{2}}\int_0^{2\pi} {\rm e}^{{\rm i}\varphi}\frac{1}{\cos \theta}  \,\partial_\varphi \psi \,\partial_\theta \psi \, {\rm d\varphi} {\rm d}\theta\,,
\end{align*}
from which it follows that 
$$
I+II + III = 0\,.
$$ 
Turning to $IV$, repeated integration by parts show that
$$
IV = -2{\rm i}\omega \int_{-\frac{\pi}{2}}^{\frac{\pi}{2}}\int_0^{2\pi} \psi {\rm e}^{{\rm i}\varphi} (\cos \theta)^2\, {\rm d\varphi} {\rm d}\theta \,.
$$
Furthermore the above implies that $IV = 0$, or in other words
$$
\int_{-\frac{\pi}{2}}^{\frac{\pi}{2}}\int_0^{2\pi}  \psi ( \cos \theta \sin \varphi ) \cos \theta \, {\rm d\varphi} {\rm d}\theta = 
\int_{-\frac{\pi}{2}}^{\frac{\pi}{2}}\int_0^{2\pi}  \psi ( \cos \theta \cos \varphi ) \cos \theta \, {\rm d\varphi} {\rm d}\theta\,,
$$
which is the desired result.

\medskip

\noindent
(iv) Differentiating~\eqref{ellipticpsi} with respect to $\varphi$, and pairing the result with $\partial_\varphi \psi$ leads to the identity
$$
\iint_{{\mathbb S}^2} |\nabla \partial_\varphi \psi|^2 \,{\rm d}\sigma = - \iint_{{\mathbb S}^2} F'(\psi) |\partial_\varphi \psi|^2 \,{\rm d}\sigma\,.
$$
Assuming that $F'>-6$, this implies that 
$$
\iint_{{\mathbb S}^2} |\nabla \partial_\varphi \psi|^2 \,{\rm d}\sigma < 6  \iint_{{\mathbb S}^2} |\partial_\varphi \psi|^2 \,{\rm d}\sigma\,.
$$
However, we know from (iii) that $\partial_\varphi \mathbb{P}_2 \psi =0$. Since $\partial_\varphi$ commutes with the spectral projectors of the Laplacian, this gives $\mathbb{P}_2\partial_\varphi \psi =0$. Therefore, the third eigenvalue of $-\Delta$ being $6$, we conclude that
$$
\iint_{{\mathbb S}^2} |\Delta \partial_\varphi \psi|^2 \,{\rm d}\sigma < 6  \iint_{{\mathbb S}^2} |\partial_\varphi \psi|^2 \,{\rm d}\sigma \leq 
\iint_{{\mathbb S}^2} |\Delta \partial_\varphi \psi|^2 \,{\rm d}\sigma\,,
$$
which is a contradiction unless $\partial_\varphi \psi \equiv 0$.
\end{proof}

Finding explicit smooth solutions of equation~\eqref{ellipticpsi} for nonlinear functions $F$ is far from obvious. The case 
$$F(s)=a\,{\rm e}^{bs} + \frac{2}{b}\qquad\text{with}\quad a\,,b \in {\mathbb R}\setminus \{0\}\,,$$
is of great interest in statistical mechanics and Riemannian geometry (see \cite{ck1,ck2,kaz}), with the specific value of the additive constant 
(given the exponential term) determined by the Gauss constraint. For $ab > 0$ there are no smooth solutions, while for 
$ab<0$ the general solution is available (see \cite{crowdy}), and can be expressed in terms of a single analytic function by performing the stereographic 
projection of the 2-sphere ${\mathbb S}^2$ which maps the North Pole to infinity and the South Pole to the origin of the compactifed complex plane 
${\mathbb C} \cup \{\infty\}$. Unfortunately, singularities are ubiquituous\footnote{A notable exception being the solution
$\psi_0(\varphi,\theta)=\frac{2}{b}\, \ln (1 + |\zeta|^2) + \frac{1}{b}\, \ln\Big( \frac{2\alpha^2}{-ab(1+
|\alpha \zeta + \beta|^2)^2}\Big)$ with free parameters $\alpha \in {\mathbb R} \setminus \{0\}$ and $\beta \in {\mathbb C}$, where 
$\zeta=\tan\big( \frac{\theta}{2} + \frac{\pi}{4}\big)\,{\rm e}^{{\rm i}\varphi} \in {\mathbb C} \cup \{\infty\}$. This solution is generated by the 
polynomial $\zeta \mapsto \alpha \zeta + \beta$ and all streamlines are circles (the streamline pattern being identical to that depicted in Fig. 3). A suitable rotation 
in ${\mathbb SO}(3))$ transforms this solution into a zonal one.}, so that this type of nonlinearity is not suitable for regular 
global flows but rather for regular flows in regions bounded by a streamline. 

\begin{figure}[h]
 \centering
  \includegraphics[width=0.33\textwidth]{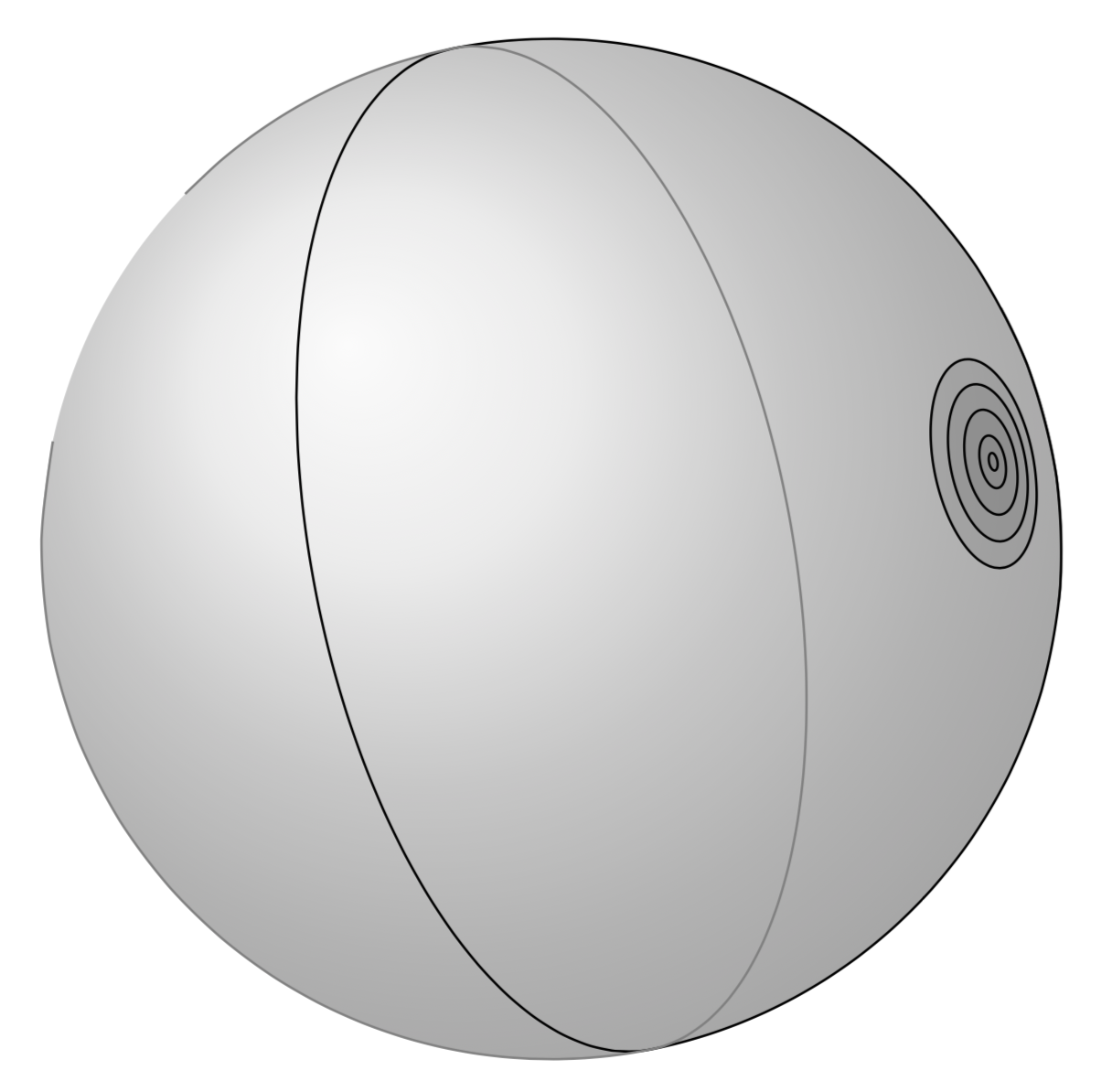}
   \captionsetup{width=.97\linewidth}
  \caption{Depiction of the streamlines for the solution \eqref{ahm2}.}
  \label{fig3}
\end{figure}

Theorem \ref{theo1} permits us to construct nonlinearities $F$ for which the equation~\eqref{ellipticpsi} admits classical non-zonal solutions. 
Denoting for $\alpha \in (0,1)$ and $k \in {\mathbb N} \cup \{0\}$ 
by $C^{k,\alpha}({\mathbb S}^2)$ the Banach space of $k$-times H\"older continuously differentiable functions $\psi: {\mathbb S}^2 \to {\mathbb R}$ 
with exponent $\alpha$, we can write \eqref{ellipticpsi} for $\omega=0$ and any continuously differentiable function $F: {\mathbb R} \to {\mathbb R}$ in the form 
\begin{equation}\label{spde1}
{\mathcal F}(\psi)=0 \quad\text{for 
the nonlinear operator}\quad {\mathcal F}: C^{2,\alpha}({\mathbb S}^2) \to C^{0,\alpha}({\mathbb S}^2)\,,\qquad {\mathcal F}(\psi)= \Delta \psi- F(\psi)\,.
\end{equation}
Note that \eqref{spde1} is ${\mathbb O}(3)$-equivariant:
$${\mathcal F}(G \psi) = G {\mathcal F}(\psi) \quad\text{for all}\quad G \in  {\mathbb O}(3) \quad\text{and all}\quad \psi \in C^{2,\alpha}({\mathbb S}^2)\,,$$
the natural action of ${\mathbb O}(3)$ on functions $\psi: {\mathbb S}^2 \to {\mathbb R}$ being given by
$$(G\psi)(X):= \psi (G^TX)\,,\qquad G \in  {\mathbb O}(3)\,,$$
where $G^T$ is the transpose of the matrix $G$ in the matrix representation of the orthogonal group ${\mathbb O}(3)$. 
Therefore, if $\psi \in C^{2,\alpha}({\mathbb S}^2)$ solves \eqref{spde1}, then so does $G\psi$ for any  $G \in  {\mathbb O}(3)$. In particular, 
since the zonal solutions of \eqref{spde1} are those symmetric with respect to the polar axis, if $G \in  {\mathbb O}(3)$ breaks this symmetry, then $G \psi$ 
is a non-zonal solution of \eqref{spde1} whenever $\psi$ is a non-constant zonal solution. We may take $G$ to be the rotation that transforms 
rotations about the polar axis into rotations about a fixed horizontal axis. By computing the expression 
$$\frac{{\rm d}^2 f}{{\rm d}\theta^2} - \tan(\theta) \,\frac{{\rm d} f}{{\rm d}\theta}$$ 
for a suitable function of the variable $\sin\theta$, we obtain specific zonal solutions of \eqref{spde1}. 
For example, one can check that for every $\varepsilon >0$ the function 
$$\psi_0(\theta)= \ln\Big(\frac{1+\varepsilon \sin\theta}{1-\varepsilon \sin\theta} \Big)\,,\qquad -\frac{\pi}{2} \le \theta \le \frac{\pi}{2}\,,$$
is a zonal solution of \eqref{ellipticpsi} with $\omega=0$, for 
\begin{equation}\label{fahm1}
F(\psi)= - \frac{1-\varepsilon^2}{2} \, [2 \sinh(\psi) + \sinh(2\psi)]\,.
\end{equation}
Consequently, 
\begin{equation}\label{ahm1}
\psi(\varphi,\theta)= \ln [1 + \varepsilon \cos^2(\theta) \sin^2(\varphi-\varphi_0)]
\end{equation}
is a non-zonal solution of \eqref{spde1} for every fixed $\varphi_0 \in [0,2\pi)$. 
While the solution \eqref{ahm1} is known (see \cite{ahm}), the above symmetry approach 
rather than the {\it ad-hoc Ansatz} made in \cite{ahm} explains how it comes about and also provides a procedure leading to the construction of other explicit non-zonal solutions. Indeed, the zonal solution 
$$\psi_0(\theta)={\rm e}^{\varepsilon \sin\theta} -1\,,\qquad -\frac{\pi}{2} \le \theta \le \frac{\pi}{2}\,,$$
of \eqref{spde1} for
\begin{equation}\label{fahm2}
F(\psi)=\varepsilon^2(1+ \psi) - (1+\psi) \ln^2(1+\psi) - 2(1+\psi) \ln(1+\psi)\,.
\end{equation}
leads to the non-zonal solution
\begin{equation}\label{ahm2}
\psi(\varphi,\theta)= {\rm e}^{\varepsilon \cos\theta \sin(\varphi-\varphi_0)} - 1
\end{equation}
for every fixed $\varphi_0 \in [0,2\pi)$. Note that the streamlines of the solutions \eqref{ahm1}-\eqref{ahm2} are circles 
with collinear centres along a segment lying in the equatorial 
plane and passing through the centre of the sphere (obtained by suitably rotating Figure \ref{fig3}), since by passing to spherical coordinates in 
${\mathbb R}^3$ we see that the level sets $[\cos\theta \sin(\varphi-\varphi_0)=d]$ are precisely the points on the sphere at distance $|d|$ 
from the line obtained rotating the $x$-axis by $\varphi_0$-degrees in the equatorial plane.

\section{Stability of stationary solutions}

\label{SOSS}

To gain insight into the complicated dynamics of the atmosphere it is advantageous to regard it as a background zonal flow presenting 
fluctuations, some transient or at least unable to develop significantly while other are enhanced in time (instabilities). One distinguishes between 
barotropic instability, in which perturbations draw strength at the expense of the kinetic energy of the background flow, and baroclinic instability, 
fed by the release potential energy through the lifting (sinking) of warm, relatively light (respectively of cold, relatively dense) fluid. Barotropic 
flows are prevalent in the stratosphere of the outer planets, so that the investigation of the stability of the explicitly known stationary 
solutions of \eqref{Eomega} is of great physical relevance. 

\subsection{Linear stability of zonal flows}

The giant planet atmospheres feature alternating prograde (eastward) and retrograde (westward) jets of different speeds and widths (see Fig. \ref{fig2}). 
Moreover, observations of the zonal flow patterns of Jupiter and Saturn indicate the development of eddies around the peaks of the westward jets. 
This type of instability is reminiscent of Rayleigh stability criterion for shear flows.

\label{ZF1}

Consider a zonal flow with a stream function $\psi_0(\theta)$ of class $C^3$, representing a stationary solution 
of the vorticity equation \eqref{Eomega}. To study perturbations of this zonal flow, it is convenient to use the 
variable $s=\sin\theta \in [-1,1]$ instead of the latitude $\theta$. Note that the conjugate variable to the longitude $\varphi$ is not the latitude 
$\theta$ but the axial component in Cartesian coordinates, $s=\sin\theta$, 
for which Hamilton's equations hold:
$$\begin{cases}
D_t\varphi = -\Psi_s\,,\\
D_t s = \Psi_\varphi\,.
\end{cases}$$ Setting
\begin{equation}\label{nots}
\Psi(\varphi,s)=\psi(\varphi,\theta)  \,,
\end{equation}
the expressions for the velocity and vorticity are
\begin{equation}\label{vvs}
u=- \sqrt{1-s^2}\,\partial_s\Psi\,,\qquad v= \frac{1}{\sqrt{1-s^2}}\,\partial_\varphi\Psi\,,\qquad \Omega=(1-s^2)\,\partial_s^2\Psi  - 2s\,\partial_s\Psi + 
\frac{1}{1-s^2}\,\partial_\varphi^2 \Psi:=\Upsilon(\varphi,s,t)\,,
\end{equation}
and \eqref{Eomega} takes the form
\begin{equation}\label{Eomegas}
\partial_t \Upsilon + \partial_\varphi\Psi\, (\partial_s \Upsilon + 2\omega) - \partial_s\Psi\,\partial_\varphi\Upsilon=0\,.
\end{equation}
The linearization of the vorticity equation \eqref{Eomegas} around the zonal flow with stream function $\Psi_0(s)=\psi_0(\theta)$ and vorticity $\Upsilon_0$ is the equation
\begin{equation}\label{zlin}
\partial_t \Upsilon -\Psi_0'\, \partial_\varphi \Upsilon + (\Upsilon_0'+2\omega)\, \partial_\varphi \Psi=0
\end{equation}
for the infinitesimal perturbation $\Psi(\varphi,s,t)$ with vorticity $\Upsilon$, where $\Psi_0'=\partial_s\Psi_0$ and $\Upsilon_0'=\partial_s\Upsilon_0$. In accordance to the discussion in 
the preamble of relation \eqref{bu}, the boundary conditions associated to \eqref{zlin} are
\begin{equation}\label{zlinbc}
\partial_\varphi\Psi=0 \quad\text{at}\quad s=\pm 1\,.
\end{equation}
We can write \eqref{zlin} in the form 
\begin{equation}\label{zlins}
\partial_t \Upsilon = {\mathcal L}_0 \Upsilon\,,
\end{equation}
with the linear operator
$${\mathcal L}_0=\Psi_0'\, \partial_\varphi - (\Upsilon_0'+2\omega)\, \partial_\varphi \Delta^{-1}$$
acting in the space 
$$L_0^2=\{\Upsilon \in L^2({\mathbb S}^2): \, \int_0^{2\pi}\int_{-1}^1 \Upsilon(\varphi,s)\,{\rm d}s{\rm d}\varphi=0\}$$
of vorticities subject to the Gauss constraint \eqref{gaussc}. Since the operator ${\mathcal L}_0$ commutes with translations in 
the azimuthal direction, by means of the Fourier modes
$$f(\varphi,s,t)=\sum_{k \in {\mathbb Z} \setminus \{0\}} f_k(s,t) \,{\rm e}^{{\rm i}k \varphi}\,,$$
for $f= \Delta^{-1}\Upsilon$, we can decompose \eqref{zlins} into 
$$\partial_t \Upsilon_k = {\rm i}k\,{\mathcal L}_0^k \Upsilon_k\,,\qquad k \in {\mathbb Z} \setminus \{0\}\,,$$
where the operators
$${\mathcal L}_0^k=\Psi_0' - (\Upsilon_0'+2\omega)\, \Delta_k^{-1}$$
act in $L^2[-1,1]$ subject to the boundary conditions $\mu(s)=0$ at $s=\pm 1$ for $\mu=\Delta_k^{-1}\Upsilon_k$, which ensure that
$$ \Upsilon_k = \Delta_k \mu=(1-s^2) \mu'' - 2s \mu' - \tfrac{k^2}{1-s^2}\,\mu\,,\qquad k \in {\mathbb Z} \setminus \{0\}\,,$$
is regular at $s=\pm 1$. Since for every $k \in {\mathbb Z} \setminus \{0\}$ the operator ${\mathcal L}_0^k$ is a compact perturbation of the multiplication operator $\Psi_0'$ with the purely essential spectrum 
$$\Sigma_0=\Big[ \inf_{ s \in [-1,1]} \{  \Psi_0'(s)\}\,,\, 
\sup_{ s \in [-1,1]} \{  \Psi_0'(s)\}\Big]\,,$$
the essential spectrum of ${\mathcal L}_0^k$ coincides with the closed real interval $\Sigma_0$, and the 
rest of the spectrum of ${\mathcal L}_0^k$ consists of at most countably many isolated eigenvalues of finite multiplicity. The discrete spectrum of ${\mathcal L}_0^k$ is symmetric about the real axis, 
since the complex conjugate of an eigenfunction for the eigenvalue $\lambda \in {\mathbb C} \setminus {\mathbb R}$ is an eigenfunction for $\overline{\lambda}$. The Fourier mode 
decomposition thus yields the spectrum of the operator ${\mathcal L}_0$: the essential spectrum 
$$\{\lambda \in {\mathbb C}: \ \lambda ={\rm i}kr \quad\text{with}\quad k \in {\mathbb Z} \setminus \{0\} \quad\text{and}\quad r \in \Sigma_0\}
$$ is located on the imaginary axis, and the discrete spectrum is symmetric about the imaginary axis and comprises at most countable many isolated eigenvalues of finite multiplicity. Linear instability 
thus amounts to the existence of an eigenvalue of ${\mathcal L}_0$ with non-zero real part.  Indeed, due to the symmetry of the discrete spectrum 
about the imaginary axis, this means the existence of an eigenvalue $\xi={\rm i}k \lambda$ with strictly positive real part, for some $k \in {\mathbb Z} \setminus \{0\}$ and 
some eigenvalue $\lambda \not \in \Sigma_0$ of ${\mathcal L}_0^k$. If $\Upsilon_k(s)$ is the 
corresponding eigenfunction of ${\mathcal L}_0^k$, then $\Upsilon_k(s)\,{\rm e}^{{\rm i}k (\varphi + \lambda t)}$ solves \eqref{zlins} and its amplitude grows indefinitely for $t \to \infty$. 

The spectral analysis is very challenging, and only few thorough investigations appear to be available:
\begin{itemize} 
\item  For $\psi_0(\theta)=\alpha \sin\theta$ with $\alpha \in {\mathbb R} \setminus \{0\}$, the essential spectrum of ${\mathcal L}_0^k=\alpha \partial_\varphi + 2(\omega - \alpha)\partial_\varphi \Delta_k^{-1}$ 
is the single point $\alpha$ and by expanding in spherical harmonics we see that the 
discrete spectrum consists of the real eigenvalues $\Big\{\Big(\alpha - \frac{2(\alpha -\omega)}{|k|(|k|+1)}\Big)\Big\}$ of multplicity $2|k|$. This zonal flow is thus linearly stable.
\item The zonal flows 
\begin{equation}\label{zrhw}
\psi_0(\theta)=\alpha Y_1^0(\theta) + \beta Y_2^0(\theta)\,,
\end{equation}
where $\alpha,\,\beta \in {\mathbb R} \setminus \{0\}$ and $Y_1^0(\theta)=\sqrt{\tfrac{3}{4\pi}}\,\sin\theta$, $Y_2^0(\theta)=\sqrt{\tfrac{5}{16\pi}}\,(3\sin^2\theta -1)$, are the zonal spherical harmonics of degree $1$ and $2$ (see the Appendix), turn out to be linearly stable (see \cite{skiba, Taylor}).
\item The zonal flows
\begin{equation}\label{zrhw3}
\psi_0(\theta)=\beta Y_3^0(\theta)\,, \qquad \beta \in {\mathbb R} \setminus \{0\}\,,
\end{equation}
with three jets (defined as extrema of the azimuthal velocity, $-\partial_\theta \psi_0$, and equal to the number of nodes of the zonal spherical harmonics $Y_3^0$ of degree $3$) are known 
(see \cite{skiba, Taylor}) to be linearly unstable if their amplitude $|\beta|$ exceeds a critical value $\beta^\ast(\omega)$.
\item Numerical simulations (see \cite{ben}) indicate that the zonal flows
\begin{equation}\label{zrhwj}
\psi_0(\theta)=\beta Y_j^0(\theta)\,, \qquad \beta \in {\mathbb R} \setminus \{0\}\,,
\end{equation}
with $j > 3$ jets are linearly unstable if their amplitude $|\beta|$ is sufficiently large.
\end{itemize} 

Let us now briefly describe a classical approach -- typically pursued within the setting of flat-space geometry (see \cite{mp}) -- that provides insight into the linear 
stability without actually finding eigenvalues. Motivated by the considerations made above, 
we consider normal mode perturbations of zonal flows of the form
\begin{equation}\label{nmp}
\Psi(\varphi,s,t)= \Phi(s)\,{\rm e}^{{\rm i} k (\varphi - \mu t)}\,,
\end{equation}
subject to the boundary condition \eqref{zlinbc}, where $k \in {\mathbb Z} \setminus \{0\}$ is the wave number and $\mu \in {\mathbb C}$ is the wave speed. The zonal flow with stream function $\Psi_0$ is 
linearly unstable if there exists a nontrivial solution \eqref{nmp} to \eqref{zlin} and \eqref{zlinbc} with $\frak{Im}(k\mu) > 0$, since 
in this case the amplitude of the perturbation \eqref{nmp} grows indefinitely with time. For $\frak{Im}(\mu) \neq 0$ the equation 
\eqref{zlin} reduces to 
\begin{equation}\label{raye}
\Big( (1-s^2) \Phi'  \Big)' - \Big\{\frac{k^2}{1-s^2} + \frac{\Upsilon_0'+2\omega}{\Psi_0'+\mu}\Big\}\,  \Phi=0\,,\qquad s \in (-1,1)\,,
\end{equation}
for the amplitude $\Phi$. Multiplying \eqref{raye} by the complex-conjugate $\overline{\Phi}$ and integrating the outcome on 
$[-1,1]$, an integration by parts yields
\begin{equation}\label{rayei}
\int_{-1}^1  \Big\{ (1-s^2) |\Phi'(s)|^2 + \frac{k^2}{1-s^2}\,|\Phi(s)|^2\Big\} \,{\rm d}\theta 
+ \int_{-1}^1  \frac{\Big(\Upsilon_0'(s)+2\omega\Big)\Big(\Psi_0'(s) + \overline{\mu}\Big)}{|\Psi_0'(s) +\mu|^2} \,|\Phi(s)|^2\,{\rm d}s =0\,.
\end{equation}
Since the imaginary part of \eqref{rayei} is
$$-\frak{Im}(\mu)\,\int_{-1}^1  \frac{\Upsilon_0'(s)+2\omega}{|\Psi_0'(s)+\mu|^2} \,|\Phi(s)|^2\,{\rm d}s\,,$$
we obtain the Rayleigh necessary condition for linear instability 
\begin{equation}\label{raynec}
\int_{-1}^1  \frac{\Upsilon_0'(s)+2\omega}{|\Psi_0'(s) +\mu|^2} \,|\Phi(s)|^2\,{\rm d}s =0\,,
\end{equation}
which requires $\Big(\Upsilon_0'(s)+2\omega\Big)$ to change sign on $(-1,1)$. A further necessary condition, 
due to Fjortoft, is obtained by taking also into account the real part of \eqref{rayei}: if \eqref{raynec} holds, then \eqref{rayei} yields
\begin{equation}\label{fji}
\int_{-1}^1  \frac{\Big(\Upsilon_0'(s)+2\omega\Big)\Psi_0'(s)}{|\Psi_0'(s)+\mu|^2} \,|\Phi(s)|^2\,{\rm d}s=-\int_{-1}^1  \Big\{ (1-s^2) |\Phi'(s)|^2 + \frac{k^2}{1-s^2}\,|\Phi(s)|^2\Big\} \,{\rm d}\theta <0\,.
\end{equation}
For any $\gamma \in {\mathbb R}$, adding \eqref{raynec} multiplied by $\gamma$ to \eqref{fji}, we get
$$\int_{-1}^1  \frac{\Big(\Upsilon_0'(s)+2\omega\Big)\Big(\Psi_0'(s) + \gamma\Big)}{|\Psi_0'(s) +\mu|^2} \,|\Phi(s)|^2\,{\rm d}s=-\int_{-1}^1  \Big\{ (1-s^2) |\Phi'(s)|^2 + \frac{k^2}{1-s^2}\,|\Phi(s)|^2\Big\} \,{\rm d}\theta <0 \,.$$
Consequently, we must have  $\Big(\Upsilon_0'(s_0)+2\omega\Big)\Big(\Psi_0'(s_0) + \gamma\Big) <0$ 
at some $s_0 \in (-1,1)$. In terms of latitude, these necessary conditions for the linear instability of a zonal flow with azimuthal velocity $U_0$ and vorticity $\Omega_0$ read:
\begin{itemize}
\item ({\it Rayleigh's criterion}) the meridional gradient of the total vorticity of the zonal flow, 
$$\Omega_0'(\theta)+2\omega\cos\theta=\Big(\Upsilon_0'(\sin\theta)+2\omega\Big)\cos\theta\,,$$ 
changes sign on the interval $\big(-\tfrac{\pi}{2},\tfrac{\pi}{2}\big)$;
\item ({\it Fjortoft's criterion}) for every $\gamma \in {\mathbb R}$ we must have   
$$\Big(\Omega_0'(\theta)+2\omega \cos\theta\Big)\Big( \frac{U_0(\theta)}{\cos\theta}- \gamma\Big)>0$$ 
at some $\theta_0 \in \big(-\tfrac{\pi}{2},\tfrac{\pi}{2}\big)$.
\end{itemize}
Note that Fjortoft's criterion implies that of Rayleigh since the existence of real numbers $\gamma_1>\gamma_2$ with 
$$\frac{U_0(\theta)}{\cos\theta} -\gamma_1 < 0 < \frac{U_0(\theta)}{\cos\theta} -\gamma_2\,,\qquad \theta \in \big(-\tfrac{\pi}{2},\tfrac{\pi}{2}\big)\,,$$ 
combined with Fjortoft's constraint for $\gamma=\gamma_1$ and $\gamma=\gamma_2$ ensures that the expression $\Omega_0'(\theta)+2\omega \cos\theta$ 
has opposite signs at two points in $\big(-\tfrac{\pi}{2},\tfrac{\pi}{2}\big)$. 
Both criteria fail for the linearly stable zonal flow $\psi_0(\theta)=\alpha\theta$ with $\alpha \in {\mathbb R} \setminus \{0\}$ since in this case
$$\Omega_0'(\theta)+2\omega \cos\theta=2(\omega- \alpha)\cos\theta\,,\qquad \frac{U_0(\theta)}{\cos\theta} - \gamma =-\alpha-\gamma\,.$$
However, they are generally far from sufficient to ensure linear instability: e.g., both hold for the linearly stable zonal flow $\psi_0(\theta)=\frac{\omega}{3}\,\sin^2\theta$, with
$$\Omega_0'(\theta)+2\omega \cos\theta=2\omega \cos\theta ( 1- 2\sin\theta)\,,\qquad \frac{U_0(\theta)}{\cos\theta} - \gamma =-\frac{2\omega}{3}\, \Big(\sin\theta + \frac{3\gamma}{2\omega}\Big)\,.$$
Rayleigh's criterion appears not to be sufficient for linear instability: it holds for the latitudinal profile of the persistent zonal jets on Jupiter and Saturn (see the data in \cite{read}).

\subsection{Nonlinear stability of stationary solutions: the Arnold approach} Due to the intricate nature of the investigation of linear stability and to its rather inconclusive nature\footnote{The question whether linearization is conclusive with 
respect to the stability of solutions to the nonlinear vorticity equation is open.}, we pursue the issue of nonlinear stability directly, relying on linear results (whenever available) to make an informed guess.

We consider a solution $\psi_0$ of~\eqref{ellipticpsi0}, which is thus a stationary solution of \eqref{Eomega}.  We implement Arnold's method (see \cite{Arnold1, Arnold2}) by defining the functional
$$
\mathcal{E}(\psi) = \iint_{{\mathbb S}^2} \left[ \frac{1}{2} |U_0 + U|^2 + K( \Omega_0 + \Omega +2\omega \sin \theta) + A \sin \theta \Omega + B (\mathbb{P}_{1} \psi)^2 \right]\,{\rm d}\sigma\,,
$$
which is a sum of conserved quantities for the flow of \eqref{Eomega}. Our aim is to choose the function $K$ and the constants $A$ and $B$ so that $\psi=0$ is a critical point of $\mathcal{E}$, and, 
furthermore, the second variation of $\mathcal{E}$ at $0$ is a definite quadratic form.  

With this in mind, we now compute the  first variation of $\mathcal{E}$ at $0$:
\begin{align*}
d\mathcal{E}_{0} (\delta {\psi}) & =\iint_{{\mathbb S}^2} \left[  U_0 \cdot \delta  U + K'( \Omega_0 + 2 \omega \sin \theta) \delta  \Omega + A \sin \theta \, \delta   \Omega  \right]\,{\rm d}\sigma\,.
\end{align*}
Expressing $\delta  \Omega$ in terms of $\delta  U$ and integrating by parts using that $\operatorname{grad} \sin \theta = \cos \theta \, \mathbf{e}_\theta$, this becomes
\begin{align*}
d\mathcal{E}_{{ \psi}} (\delta {\psi}) & = \iint_{{\mathbb S}^2}  \left[  U_0 \cdot \delta   U - K'( \Omega_0 + 2 \omega \sin \theta) \operatorname{div} J \delta  U  - A \sin \theta \operatorname{div} J \delta  U  \right]\, {\rm d}\sigma\\
& =  \iint_{{\mathbb S}^2}  \left[ U_0 \cdot \delta  U - K''( \Omega_0 + 2 \omega \sin \theta) J \operatorname{grad} ( \Omega_0 + 2 \omega \sin \theta) \cdot \delta  U   + A \cos \theta \, \mathbf{e}_\varphi \cdot \delta   U   \right]\,{\rm d}\sigma \,.
\end{align*}
The stationary solution $\psi_0$ being such that $ \Omega_0 + 2 \omega \sin \theta = F(\psi_0)$ implies, after taking the gradient, that
$$
\operatorname{grad}  ( \Omega_0 + 2 \omega \sin \theta) = F'(\psi_0) \operatorname{grad} \psi_0 =  - F'(\psi_0) ( J U_0 + \omega \cos \theta \,\mathbf{e}_\theta).
$$
Therefore
$$
d\mathcal{E}_{\psi_0} (\delta {\psi}) = \iint_{{\mathbb S}^2} \delta {U} \cdot U_0 \left[  1 - K''(F(\psi_0)) F'(\psi_0) \right] \,dx + \int [A - \omega K''(F(\psi_0)) F'(\psi_0) ] \cos \theta\, \mathbf{e}_\varphi \cdot \delta {U} \,{\rm d}\sigma\,.
$$
It remains to choose $K''(F(x)) F'(x)=1$ and $A = \omega$ to obtain that the first variation is zero.

Turning to the second variation,
$$
d^2\mathcal{E}_{{ \psi_0}} (\delta {\psi}) =  \iint_{{\mathbb S}^2} \left[ |\delta U|^2 + \frac{1}{F'(\psi_0)} (\delta \Omega)^2 + 2 B (\mathbb{P}_{1} \delta \psi)^2 \right]\,{\rm d}\sigma\,,
$$
we expand $\psi$ in spherical harmonics: letting $\mathbb{P}_k$ denote the projector on the $k$-th eigenspace of $\Delta$ associated to the eigenvalue $-k(k+1)$, we write
$$
\delta \psi = \sum_{k \geq 1} \alpha_k \,\mathbb{P}_k \delta \psi \,,
$$
since we can subtract a constant from $\psi$ to ensure that $\mathbb{P}_0 \psi = 0$. Then
\begin{align*}
\label{dsquaredE}
d^2\mathcal{E}_{0}(\delta \psi) & = \iint_{{\mathbb S}^2} \left[ \sum_{k\geq 1} k(k+1)\, |\mathbb{P}_k \delta \psi|^2 + \frac{1}{F'(\psi_0)} \sum_{k\geq 1}k^2(k+1)^2 |\mathbb{P}_k \delta \psi|^2 + 2B\, |\mathbb{P}_1 \delta \psi|^2 \right] \,{\rm d}\sigma \\
& = \iint_{{\mathbb S}^2}\left[ \left( 2 + \frac{4}{F'(\psi_0)} + 2B\right) (\mathbb{P}_1 \delta \psi)^2 + \sum_{k \geq 2}\left( k(k+1) + \frac{k^2(k+1)^2}{F'(\psi_0)} \right) (\mathbb{P}_k \delta \psi)^2 \right] \, {\rm d}\sigma\,.
\end{align*}
The question of the coercivity of $d^2 \mathcal{E}_0$ reduces to determining eigenvalues of the Schr\"odinger operator $- \frac{\Delta}{F'(\psi)} + 1$ on the orthogonal complement of $\mathbb{E}_1$. We 
distinguish several cases, according to the the range of $F'(\psi_0)$:
\begin{itemize}
\item If $F'>0$, the quadratic form is positive-definite if one chooses $B=0$, for instance. This corresponds to Type II Arnold stable solutions (see Subsection~\ref{MR} for the definition). 
However, the condition $F'>0$  is only satisfied by constant solutions (see Theorem~\ref{theo1}).
\item If $-6 < F'(\psi_0) < 0$, then the quadratic form is negative-definite: indeed, $k(k+1) +  \frac{k^2(k+1)^2}{F'(\psi_0)} > 0$ for all $k \geq 2$, and the mode $k=1$ can be handled by choosing $B=-10$. This 
corresponds to Type I Arnold stable solutions, with the difference that the first eigenvalue of the Laplacian is replaced by the second eigenvalue -- a modification related to conservation laws which are 
specific to the sphere. Note that, due to Theorem~\ref{theo1}, the condition $-6 < F'(\psi_0) < 0$ forces solutions to be zonal up to a rotation.
\end{itemize}
Combining the above considerations with standard arguments leads to the following result.

\begin{theorem} \label{thmarnold}
For $0> F' > -6$ any solution $\psi_0$ of~\eqref{ellipticpsi0} is stable in $H^2({\mathbb S}^2)$: if $\widehat{\psi}(t)$ is the solution of~\eqref{Eomega} with initial data $\psi(0)=\tilde{\psi}_0$, then
$
\| \widehat{\psi}(t) - \psi_0 \|_{H^2} \lesssim \| \widehat{\psi}_0 - \psi_0 \|_{H^2}\,.
$
\end{theorem}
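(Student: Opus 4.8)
The plan is to use the energy--Casimir functional $\mathcal{E}$ constructed above as a Lyapunov functional, exploiting three facts already in hand: that $\mathcal{E}$ is an exact conserved quantity of~\eqref{Eomega} (being a combination of the kinetic energy, a Casimir, the invariant $\iint\Omega\sin\theta\,{\rm d}\sigma$, and $\|\mathbb{P}_1\psi\|_{L^2}^2$, all recorded in Corollary~\ref{iom}), that $\psi_0$ is a critical point once $K''(F(x))F'(x)=1$, $A=\omega$, and that the second variation is negative-definite for the choice $B=-10$. Writing the perturbation as $\psi=\widehat\psi-\psi_0$ and subtracting a constant so that $\mathbb{P}_0\psi=0$, the task is to upgrade the infinitesimal negativity of $d^2\mathcal{E}_0$ into a genuine, non-infinitesimal two-sided bound $|\mathcal{E}(\psi)-\mathcal{E}(0)|\asymp\|\psi\|_{H^2}^2$.

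The step requiring the most care is the control of the nonlinear Casimir remainder $\iint[K(q_0+\Omega)-K(q_0)-K'(q_0)\Omega]\,{\rm d}\sigma$, where $q_0=\Omega_0+2\omega\sin\theta$. The freedom in choosing $K$ is decisive here: the Casimir $\iint K(\text{PV})\,{\rm d}\sigma$ is conserved for \emph{every} differentiable $K$, and the critical-point condition only pins down $K''$ along the range of $q_0$, through $K''(q_0)=1/F'(\psi_0)$. Because $F'<0$ strictly, $F$ is monotone on $\psi_0(\mathbb{S}^2)$, so this prescribes $K''$ there consistently, with values in a compact subinterval of $(-\infty,-\tfrac16)$ (using $-6<F'<0$ and compactness of $\psi_0(\mathbb{S}^2)$). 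I would then extend $K$ to all of $\mathbb{R}$ keeping $-C_2\le K''\le -C_1<0$ for fixed $0<C_1\le C_2$. Taylor's theorem with Lagrange remainder then yields the pointwise, \emph{global} bound $-\tfrac{C_2}{2}\Omega^2\le K(q_0+\Omega)-K(q_0)-K'(q_0)\Omega\le -\tfrac{C_1}{2}\Omega^2$, valid with no smallness of $\Omega$.

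Inserting this into $\mathcal{E}(\psi)-\mathcal{E}(0)=\tfrac12\|U\|_{L^2}^2+\iint[K(q_0+\Omega)-K(q_0)-K'(q_0)\Omega]\,{\rm d}\sigma+B\|\mathbb{P}_1\psi\|_{L^2}^2$, I diagonalize in spherical harmonics via $\|U\|_{L^2}^2=\sum_k\lambda_k|\mathbb{P}_k\psi|^2$ and $\|\Omega\|_{L^2}^2=\sum_k\lambda_k^2|\mathbb{P}_k\psi|^2$ with $\lambda_k=k(k+1)$. For the upper bound the coefficient $\tfrac12\lambda_k(1-C_1\lambda_k)$ is strictly negative for every $k\ge 2$, precisely because $C_1>\tfrac16$ and $\lambda_k\ge 6$ (this is where the threshold $-6$ enters), while the mode $k=1$ is made negative by $B=-10$; this gives $\mathcal{E}(\psi)-\mathcal{E}(0)\le -c\|\psi\|_{H^2}^2$. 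For the lower bound one discards $\tfrac12\|U\|^2\ge 0$ and uses $K''\ge -C_2$ together with $\|\psi\|_{L^2}\lesssim\|\Omega\|_{L^2}$ (as $\lambda_k\ge 2$), giving $\mathcal{E}(\psi)-\mathcal{E}(0)\ge -C\|\psi\|_{H^2}^2$; throughout one uses that $\|\psi\|_{H^2}\asymp\|\Omega\|_{L^2}$ for mean-zero $\psi$.

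Conservation of $\mathcal{E}$ then closes the argument: for a solution $\widehat\psi(t)$ with perturbation $\psi(t)$,
\[
c\,\|\psi(t)\|_{H^2}^2\le -\bigl(\mathcal{E}(\psi(t))-\mathcal{E}(0)\bigr)=-\bigl(\mathcal{E}(\psi(0))-\mathcal{E}(0)\bigr)\le C\,\|\psi(0)\|_{H^2}^2,
\]
which is exactly $\|\widehat\psi(t)-\psi_0\|_{H^2}\lesssim\|\widehat\psi_0-\psi_0\|_{H^2}$, and, notably, with no smallness restriction thanks to the global convexity bound. The only remaining technical point, which I would dispatch using the global well-posedness in $H^s$, $s>2$, recorded in the introduction together with the continuity of $\mathcal{E}$ on $H^2$, is that both the conservation law and the coercivity estimate are legitimate at the $H^2$ regularity in which stability is asserted.
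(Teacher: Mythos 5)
Your proposal is correct and follows the paper's own route: the paper builds the same energy--Casimir functional $\mathcal{E}$, makes the first variation vanish via $K''(F(\psi_0))F'(\psi_0)=1$ and $A=\omega$, checks negative-definiteness of the second variation for $-6<F'<0$ with $B=-10$, and then invokes ``standard arguments'' --- which are exactly the convexity upgrade you execute explicitly (global extension of $K$ with $-C_2\le K''\le -C_1<-\tfrac16$ using compactness of $\psi_0(\mathbb{S}^2)$ and monotonicity of $F$, Taylor with Lagrange remainder, spectral coercivity mode by mode, and conservation of $\mathcal{E}$). The one point worth making explicit is that conservation of $\|\mathbb{P}_1(\widehat{\psi}-\psi_0)\|_{L^2}^2$ uses that $\psi_0$ is zonal in the rotating frame (guaranteed, up to a rotation, by Theorem~\ref{theo1} when $-6<F'<0$), since for the full solution only $c_1^0$ and the moduli $|c_1^{\pm1}|$ are invariants, so a non-axisymmetric degree-one part of $\psi_0$ would break the invariance of the cross term.
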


The limiting case in the above theorem is given by $F' = -6$, which corresponds to Rossby-Haurwitz solutions in $\mathbb{E}_1 + \mathbb{E}_2$. They will be the focus of Section 5 below.

Theorem~\ref{thmarnold} applies to the explicit stationary solutions discussed in Section 3. Indeed, since 
\eqref{fahm1}-\eqref{ahm1} yield
$$\frac{1}{F'(\psi)}= -\,\frac{1}{2(1-\varepsilon^2)}\,\frac{[1 - \varepsilon^2 \cos^2(\theta) \sin^2(\varphi-\varphi_0)]^2}{1 +3 \,\varepsilon^2 \cos^2(\theta) \sin^2(\varphi-\varphi_0)}\,,$$
while \eqref{fahm2}-\eqref{ahm2} lead to
$$\frac{1}{F'(\psi_0)}= -\,\frac{1}{2-\varepsilon^2 + 4\varepsilon  \cos(\theta) \sin(\varphi-\varphi_0) + \varepsilon^2 \cos^2(\theta) \sin^2(\varphi-\varphi_0)}\,,$$
we see that for $\varepsilon>0$ small enough the solutions \eqref{ahm1} and \eqref{ahm2} are stable. Regarding the physical relevance of 
these stable stationary solutions, note that with the exception of
the equatorial regions containing a broad eastward zonal jet, vortices
are generally found on Jupiter and Saturn at all latitudes, preferentially in regions of westward zonal flow (see the data in \cite{yada}).

\subsection{Nonlinear stability of zonal flows}

\subsubsection{Arnold's approach}

\label{subsectionzonal}

\label{ZF2}

We now investigate the nonlinear stability of smooth zonal flows $\psi_0 = f(\theta)$, with associated azimuthal velocity and vorticity given by
$$
U_0 = - f'(\theta) \mathbf{e}_\varphi\,, \qquad \Omega_0 = f''(\theta) - \tan \theta  f'(\theta) = g(\theta)\,,
$$
respectively. The main result (Theorem~\ref{theoremzonal}) was proved in~\cite{Taylor} in greater generality (for rotating surfaces that are not necessarily spherical), but 
we include it here for ease of reference, along with a more straightforward proof in the case of the sphere.

\begin{theorem} \label{theoremzonal}
If there exists $\epsilon,\, A \in {\mathbb R}$ such that 
\begin{equation}\label{szc}
\left| \frac{f'(\theta) - A \cos \theta}{g'(\theta)} \right| > \epsilon >0 \qquad \mbox{on $\quad\displaystyle \left(-\frac{\pi}{2},\frac{\pi}{2} \right)$}\,,
\end{equation}
stable in $H^2({\mathbb S}^2)$: if $\widehat{\psi}(t)$ is the solution of~\eqref{Eomega} with initial data $\psi(0)=\widehat{\psi}_0$, then $\| \widehat{\psi}(t) - \psi_0 \|_{H^2} \lesssim \| \widehat{\psi}_0 - \psi_0 \|_{H^2}$.
\end{theorem}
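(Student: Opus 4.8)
The plan is to establish Lyapunov stability by Arnold's energy--Casimir (convexity) method, adapting the functional used above for~\eqref{ellipticpsi0} to the zonal setting; the one genuinely new ingredient is the conserved degree-one vorticity component furnished by Proposition~\ref{conserved}(iii) and Corollary~\ref{iom}(iii), which is what the free constant $A$ is coupled to. Writing the perturbation as $\delta\psi=\psi-\psi_0$, with velocity $\delta U$ and vorticity $\delta\Omega=\Delta\delta\psi$, I would work with
\[
\mathcal{E}(\delta\psi)=\iint_{{\mathbb S}^2}\Big[\tfrac12\,|U_0+\delta U|^2+C\big(\Omega_0+\delta\Omega+2\omega\sin\theta\big)+A\sin\theta\,\delta\Omega+B\,(\mathbb{P}_1\delta\psi)^2\Big]\,{\rm d}\sigma .
\]
Each term is conserved along the flow of~\eqref{Eomega}: the kinetic energy and the Casimir of the materially transported potential vorticity \eqref{ertel} by Corollary~\ref{iom}(i)--(ii), while the invariance of the degree-one coefficients (Corollary~\ref{iom}(iii)) makes both $A\iint\sin\theta\,\delta\Omega$ and $\|\mathbb{P}_1\delta\psi\|_{L^2}^2$ conserved. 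The function $C$ and the constants $A,B$ are to be chosen so that $\delta\psi=0$ is a critical point at which the second variation is definite.

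First I would impose criticality. Reprising the first-variation computation carried out above for the general Arnold functional (writing $\delta\Omega=-\operatorname{div}J\,\delta U$, integrating by parts, and using $\operatorname{grad}\sin\theta=\cos\theta\,\mathbf{e}_\theta$), all contributions collapse onto the $\mathbf{e}_\varphi$-component of $\delta U$, so that
\[
d\mathcal{E}_0(\delta\psi)=\iint_{{\mathbb S}^2}\big[-f'(\theta)+C''(q_0)\,\partial_\theta q_0+A\cos\theta\big]\,\mathbf{e}_\varphi\!\cdot\!\delta U\,{\rm d}\sigma ,
\]
where $q_0=g+2\omega\sin\theta$ is the background potential vorticity. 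Vanishing of the first variation thus forces $C''(q_0)\,\partial_\theta q_0=f'(\theta)-A\cos\theta$, which, for a fixed choice of $A$, determines $C$ implicitly along the range of $q_0$. This is exactly where hypothesis~\eqref{szc} enters: it guarantees that $q_0$ is strictly monotone in $\theta$ (hence a legitimate coordinate in which $C$ is well defined), and that the resulting $C''(q_0)$ --- comparable to the reciprocal of the quantity bounded in~\eqref{szc} --- is bounded away from zero and of one fixed sign on $(-\tfrac\pi2,\tfrac\pi2)$. Geometrically, \eqref{szc} says that the modified stream function $\psi_0-A\sin\theta$ is a nondegenerate function of the potential vorticity.

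Next I would compute the second variation, which as above reduces to
\[
d^2\mathcal{E}_0(\delta\psi)=\iint_{{\mathbb S}^2}\Big[|\delta U|^2+C''(q_0)\,(\delta\Omega)^2+2B\,(\mathbb{P}_1\delta\psi)^2\Big]\,{\rm d}\sigma ,
\]
and prove that it is coercive, i.e.\ comparable to $\|\delta\psi\|_{H^2({\mathbb S}^2)}^2$. Since $C''(q_0)$ has constant sign with $|C''(q_0)|\ge\epsilon$, there are two regimes, mirroring Arnold's types~II and~I. If $C''>0$, the form is positive definite and already dominates $\|\delta\Omega\|_{L^2}^2$, hence $\|\delta\psi\|_{H^2}^2$, and one may take $B=0$. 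If $C''<0$, I would first neutralize the first eigenspace with the conserved term $2B\,\|\mathbb{P}_1\delta\psi\|_{L^2}^2$ (choosing $B$ as for the mode $k=1$ in Theorem~\ref{thmarnold}), and on $\mathbb{E}_1^{\perp}$ use the spectral gap of $-\Delta$ --- whose second eigenvalue is $6$, giving $\|\delta\Omega\|_{L^2}^2\ge 6\,\|\delta U\|_{L^2}^2$ there --- to convert the indefinite-looking expression into a genuinely definite one; this is precisely the mechanism by which the threshold $-6$ of Theorem~\ref{thmarnold} reappears. The variable-coefficient nature of $C''(q_0)$, which prevents a clean diagonalization in spherical harmonics, is what makes this step more delicate than in the constant-coefficient case.

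Finally I would upgrade definiteness of the second variation to nonlinear stability. Conservation of $\mathcal{E}$ gives $\mathcal{E}(\delta\psi(t))=\mathcal{E}(\delta\psi(0))$; expanding $\mathcal{E}$ about $\delta\psi=0$ and bounding the Taylor remainder of the Casimir $C$ using two-sided bounds on $C''$, one obtains $\|\delta\psi(t)\|_{H^2}^2\lesssim\mathcal{E}(\delta\psi(t))=\mathcal{E}(\delta\psi(0))\lesssim\|\delta\psi(0)\|_{H^2}^2$, which is the asserted estimate. I expect the main obstacle to be the control of this nonquadratic remainder: $C$ is genuinely nonlinear and its argument $q_0+\delta\Omega$ sweeps out all of~${\mathbb R}$, so one must extend $C$ off the range of $q_0$ while keeping $C''$ bounded above and below, and then absorb the cubic-and-higher terms uniformly --- it is here that $H^2$ (rather than merely $H^1$) control of $\delta\Omega$ is essential. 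The degeneracy of the quadratic form on $\mathbb{E}_1$, which is removed only by invoking the conserved degree-one coefficients, is the second delicate point.
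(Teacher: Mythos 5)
Your proposal is essentially the paper's proof, elaborated. The paper's argument is exactly your steps one through three in compressed form: it takes $\mathcal{E}(\psi)=\iint_{{\mathbb S}^2}\big[\tfrac12|U|^2+K(\Omega)+A\sin\theta\,\Omega\big]\,{\rm d}\sigma$, imposes criticality by choosing $K$ with $K''(g(\theta))\,g'(\theta)=f'(\theta)-A\cos\theta$, computes the same second variation $\iint_{{\mathbb S}^2}\big[|\delta U|^2+\tfrac{f'-A\cos\theta}{g'}(\delta\Omega)^2\big]\,{\rm d}\sigma$, and then simply asserts that this ``ensures stability''. Where you differ, you are actually more careful than the printed text: you build the Casimir from the potential vorticity $\Omega+2\omega\sin\theta$, which is what is genuinely conserved for $\omega\neq 0$ (the paper's $\iint K(\Omega)\,{\rm d}\sigma$ is an invariant only of $({\mathcal E}_0)$, so the paper implicitly performs the change-of-frame reduction \eqref{changeofframe} first --- consistent with its application, where $f$ absorbs the solid-body part via $U_0=-f'-\omega\cos\theta$; note that with your formulation the criticality condition yields the denominator $g'+2\omega\cos\theta$, which matches \eqref{szc} only after that same reduction). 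You also spell out the nonlinear upgrade (extension of $C$ off the range of $q_0$, two-sided bounds on $C''$, Taylor-remainder control), which the paper omits entirely. One small slip: $C''(q_0)$ \emph{is} the quantity bounded in \eqref{szc}, not its reciprocal, though your stated conclusion (bounded away from zero) is the right one.

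The one step in your write-up that would fail as described is the negative branch of the sign dichotomy. If $C''(q_0)\le-\epsilon$ with $\epsilon>0$ small, adding $B\,(\mathbb{P}_1\delta\psi)^2$ and invoking the spectral gap $\|\delta\Omega\|_{L^2}^2\ge 6\|\delta U\|_{L^2}^2$ on $\mathbb{E}_1^\perp$ gives $-d^2\mathcal{E}_0\ge \epsilon\|\delta\Omega\|^2-\|\delta U\|^2\ge(\epsilon-\tfrac16)\|\delta\Omega\|^2$, which is definite only when $\epsilon>\tfrac16$: mere nonvanishing is not enough, one needs $\tfrac{f'-A\cos\theta}{g'}$ below the threshold $-\tfrac16$, the exact counterpart of $F'>-6$ in Theorem~\ref{thmarnold}. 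A concrete instance inside the paper: for $\psi_0=\beta Y_3^0$ at $\omega=0$ one has $g=-12f$, so with $A=0$ the ratio is identically $-\tfrac{1}{12}$ and \eqref{szc} holds, yet the quadratic form $\|\delta U\|^2-\tfrac{1}{12}\|\delta\Omega\|^2$ is positive on $\mathbb{E}_2$ and negative on $\mathbb{E}_j$ for $j\ge 4$, so the energy--Casimir argument cannot conclude there (and indeed such three-jet flows are reported linearly unstable at large amplitude on the rotating sphere). To be fair, this lacuna is inherited from the paper: its proof never addresses the sign of the coefficient, and in its Uranus/Neptune applications both quadratics in \eqref{zf3} are rootless with positive leading coefficient, so the ratio is positive and bounded --- the positive-definite branch, where your argument, like the paper's, is complete.
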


Note that the condition \eqref{szc} is satisfied if, for instance,
\begin{itemize}
\item $|g'(\theta)|>0$ on $(-\frac{\pi}{2},\frac{\pi}{2})$,
\item $|f'(\theta)| \lesssim |\theta \pm \frac{\pi}{2}|$ for $\theta$ close to $\mp \frac{\pi}{2}$,
\item $|g'(\theta)| \gtrsim |\theta \pm \frac{\pi}{2}|$ for $\theta$ close to $\mp \frac{\pi}{2}$,
\end{itemize}
hold simultaneously.

\begin{proof} We set
$$
\mathcal{E}(\psi) = \iint_{{\mathbb S}^2} \left[ \frac{1}{2} | U|^2 + K( \Omega) + A \sin \theta \Omega \right]\,{\rm d}\sigma\,.
$$
Then, since $\operatorname{grad} \Omega_0 = g'(\theta) \mathbf{e}_\theta$, we have
\begin{align*}
d\mathcal{E}_{{ \psi_0}} (\delta {\psi}) & =  \iint_{{\mathbb S}^2} \left[  U_0 \cdot \delta U + K'( \Omega_0) \delta  \Omega  + A \sin \theta \delta \Omega \right]\,{\rm d}\sigma \\
& =   \iint_{{\mathbb S}^2}  \left[ U_0 \cdot \delta  U - K''( \Omega_0) J \operatorname{grad}  \Omega_0 \cdot \delta  U + A \operatorname{grad} \sin  \theta  \cdot J \delta U \right]\,{\rm d}\sigma \\
& = \iint_{{\mathbb S}^2} \left[ -f'(\theta) + K''(g(\theta)) g'(\theta) + A \cos \theta \right] \mathbf{e}_\varphi \cdot \delta  U_0 \, {\rm d}\sigma\,.
\end{align*}
In order for this first variation to vanish, we choose $K$ such that $-f'(\theta) + K''(g(\theta)) g'(\theta)=-A \cos \theta$. The second variation of $\mathcal{E}$ is then
$$
d^2\mathcal{E}_{{ \psi_0}} (\delta {\psi}) =  \iint_{{\mathbb S}^2} \left[ |\delta U|^2 + \frac{f'(\theta) - A \cos \theta}{g'(\theta)} (\delta \Omega)^2 \right]\,{\rm d}\sigma\,,
$$
which ensures stability.
\end{proof}

\subsubsection{Application to the atmospheres of Uranus and Neptune}
To illustrate the applicability of Theorem~\ref{theoremzonal}, let us now investigate the zonal wind profiles of Uranus and Neptune, consisting of one broad retrograde equatorial jet flanked by two 
prograde jets at higher latitudes (see Figure \ref{fig2}). The zonal flow is symmetric about the Equator for both planets, but there are noticeable differences of the latitudinal flow profiles:
\begin{itemize}
\item on Uranus the equatorial jet is located within the latitude band between 30$^\circ$N and  30$^\circ$S, while on 
Neptune it extends over 50$^\circ$;
\item the prograde/retrograde (eastward/westward) zonal flows on Uranus, measured relative to the planet's rotation speed about its polar axis, peak at about 200 m/s, respectively at 80 m/s, the corresponding values for Neptune being about 200 m/s and 400 m/s, respectively.
\end{itemize}
Recalling \eqref{changeofframe}, if the latitudinal profile of the zonal flow with respect to the rotation about the planet's polar axis (with zonal velocity $\theta \mapsto \omega\,\cos\theta$), 
depicted in Figure \ref{fig2}, is given by the function
\begin{equation}\label{zf}
U_0(\theta)=\alpha\, \cos^5\theta + \beta\, \cos^3\theta + \gamma\, \cos\theta\,,\qquad \theta \in \Big( -\frac{\pi}{2},\frac{\pi}{2}\Big)\,,
\end{equation}
for some real constants $\alpha \neq 0$, $\beta$ and $\gamma$, then
$$U_0(\theta)=- f'(\theta) - \omega\, \cos\theta \,,\qquad \theta \in \Big[ -\frac{\pi}{2},\frac{\pi}{2}\Big]\,,$$
with the notation of Theorem \ref{theoremzonal}. We can now compute
\begin{equation}\label{zf2}
\frac{-f'(\theta) + A\,\cos\theta}{g'(\theta)}= \frac{\cos^4\theta + \frac{\beta}{\alpha}\,\cos^2\theta + \frac{\gamma-\omega +A}{\alpha}}{30\Big(\cos^4\theta + \frac{2(\beta-2\alpha)}{5\alpha}\,\cos^2\theta + \frac{\gamma-\omega -8\beta}{15\alpha}\Big)}\,,\qquad 
 \theta \in \Big( -\frac{\pi}{2},\frac{\pi}{2}\Big)\,.
 \end{equation}
so that the stability criterion provided in Theorem \ref{theoremzonal} applies if the quadratic polynomials 
\begin{equation}\label{zf3}
x^2 + \frac{\beta}{\alpha}\,x + \frac{\gamma-\omega +A}{\alpha} \quad\text{and}\quad x^2 + \frac{2(\beta-2\alpha)}{5\alpha}\,x + \frac{\gamma-\omega -8\beta}{15\alpha}
\end{equation}
have the same roots in the interval $(0,1)$. 
\begin{itemize}
\item Since the unit of the non-dimensional zonal speed scaling for Uranus corresponds to 150 m/s (see the first 
table in Section 7), the latitudinal profile of the zonal flow on Uranus depicted in Figure \ref{fig2} is well-approximated by a function of the form \eqref{zf} if we require $U_0$ to 
have the minimum $U_0(0)=-\frac{8}{15}$ and the maximum $U_0(\frac{\pi}{3})=\frac{4}{3}$ on $\big[0,\frac{\pi}{2}\big]$, these being the non-dimensional counterparts of an eastward speed of 
80 m/s and a westward speed of 200 m/s, respectively. The condition that $\theta=\frac{\pi}{3}$ is a critical
 point and the above two specific values of the function $U_0$ yield the linear system
$$\begin{cases}
&\alpha + \beta + \gamma = -\frac{8}{15}\,,\\
& \tfrac{5}{16}\,\alpha + \tfrac{3}{4}\,\beta + \gamma = 0\,,\\
& \tfrac{1}{32}\,\alpha + \tfrac{1}{8}\,\beta + \tfrac{1}{2}\,\gamma = \tfrac{4}{3}\,,
\end{cases}$$
whose unique solution is 
$$\alpha=\frac{64}{45}\,, \quad\beta=-\frac{272}{45}\,,\quad \gamma=\tfrac{184}{45}\,.$$
With $\omega=18$ the relevant value for Uranus (see the first 
table in Section 7), we obtain that the second quadratic polynomial from \eqref{zf3}, expressed in terms of $x=\cos^2\theta$, is 
$$x \mapsto x^2-\tfrac{5}{2}\,x + \tfrac{155}{96}\,,$$
with no real roots. Choosing $A \in {\mathbb R}$ so that the first quadratic polynomial in \eqref{zf3} has no roots, we can apply Theorem \ref{theoremzonal} 
and we conclude that the zonal flow pattern of Uranus is stable. 
\item For Neptune the non-dimensional unit for the zonal speed corresponds to 200 m/s (see the first 
table in Section 7), the latitudinal profile of the zonal flow depicted in Figure \ref{fig2} is well-approximated by a function of the form \eqref{zf} if we require $U_0$ to 
have the minimum $U_0(0)=-2$ and the maximum $U_0(\frac{5\pi}{12})=1$ on $\big[0,\frac{\pi}{2}\big]$. The condition that $\theta=\frac{5\pi}{12}$ with $\cos\theta \approx \tfrac{1}{4}$  is a critical
 point and the above two specific values of the function $U_0$ yield the linear system
$$\begin{cases}
&\alpha + \beta + \gamma = -2\,,\\
& \tfrac{5}{4^4}\,\alpha + \tfrac{3}{4^2}\,\beta + \gamma = 0\,,\\
& \tfrac{1}{4^5}\,\alpha + \tfrac{1}{4^3}\,\beta + \tfrac{1}{4}\,\gamma = 1\,,
\end{cases}$$
whose unique solution is $\alpha=\frac{2048}{75}$, $\beta=-\frac{2656}{75}$, $\gamma=\tfrac{458}{75}$. With $\omega=13$ the relevant value for Neptune (see the first 
table in Section 7), we obtain that the second quadratic polynomial in $x=\cos^2\theta$ from \eqref{zf3} is 
$$x \mapsto x^2-\tfrac{211}{160}\,x + \tfrac{20731}{30720}\,,$$
with no real roots as $\tfrac{211}{160} \approx 1.31$ and $\tfrac{20731}{30720} \approx 0.674$. Consequently, choosing $A \in {\mathbb R}$ so that the first quadratic polynomial in 
\eqref{zf3} has no roots, Theorem \ref{theoremzonal} yields that the zonal flow pattern of Neptune is also stable. 
\end{itemize}
Unfortunately this approach is not applicable to the likely stability of the zonal jets on Jupiter and Saturn to breaking up into meanders and vortex-like eddies, but both 
zonal jet patterns (see \cite{read} for their detailed profiles) are not far off from entering the framework of Theorem~\ref{theoremzonal}. In contrast to this, the profiles of terrestrial 
stratospheric jets derived from observational data (see \cite{mns}) are well beyond condition \eqref{szc}, as is to be expected since the Earth's polar jet stream is known to be unstable.

\section{Stability results for degree 2 Rossby-Haurwitz waves}

\label{sectionRH}

Due to the considerable physical relevance of the largest-scale Rossby-Haurwitz waves \eqref{eotw} (that is, those with low degree), their stability properties are of great interest.

\begin{theorem}\label{thmRH} 
(i) (Stability of zonal Rossby-Haurwitz flows of degree $n \le 2$) 
Zonal solutions $\psi_0$ to \eqref{Eomega} of the form
\begin{equation}\label{rh2}
\psi_0(\theta) = \alpha\, \sin \theta + \beta Y_2^0(\theta)\,,\qquad \alpha \in {\mathbb R}\,,\quad \beta \in {\mathbb R}\setminus \{0\}\,,
\end{equation}
are stable in $H^2({\mathbb S}^2)$ under perturbations with bounded vorticity.

(ii) (Instability of non-zonal Rossby-Haurwitz waves) Non-zonal Rossby-Haurwitz waves $\psi_0$ of the form \eqref{eotw} are unstable. To be more specific, there exists $\epsilon>0$ 
and a sequence $\widehat{\psi}^n_0 \to \psi_0$ in $H^2({\mathbb S}^2)$, so that for the solutions $\widehat{\psi}^n(t)$ of \eqref{Eomega} with initial data $\widehat{\psi}^n(0)=\widehat{\psi}^n_0$ we have
$$
\sup_{t>0} \| \psi^n(t) - \psi_0(t) \|_{L^2({\mathbb S}^2)} > \epsilon > 0.
$$

(iii) (Stability of $\mathbb{E}_1 + \mathbb{E}_2$) The instability of a Rossby-Haurwitz wave of degree 2 can only occur by energy transfers between spherical harmonics of degree $2$. More precisely, 
if we consider a perturbation $\widehat{\psi}_0 \in H^2({\mathbb S}^2)$ of 
\begin{equation}\label{rh12}
\psi_0(\varphi,\theta) = -\frac{\omega}{2}\, \sin \theta + \beta_0 Y(\varphi,\theta)\,,
\end{equation}
where $Y \in \mathbb{E}_2$ with $\Vert Y \Vert_{L^2({\mathbb S}^2)}=1$ and $\beta_0 \in {\mathbb R}\setminus \{0\}$, the solution $\widehat{\psi}(t)$ of \eqref{Eomega} with initial data $\widehat{\psi}(0)=\widehat{\psi}_0$ 
can be written as
$$
\widehat{\psi}(t) =- \alpha\, \sin \theta + c_1^{-1}\,{\rm e}^{{\rm i}\omega t} \,Y_1^{-1}+ c_1^1 \,{\rm e}^{-{\rm i}\omega t} \,Y_1^1 + \beta(t)\, Y(t) +  \tilde{\psi}(t)\,,\qquad t \ge 0\,,
$$
where
\begin{equation*}
\left\{
\begin{array}{l}
c_1^{\pm 1} \in {\mathbb C} \quad\text{and}\quad \alpha,\, \beta(t) \in \mathbb{R}\,, \\
Y(t) \in \mathbb{E}_2 \quad\text{with}\quad \Vert Y(t) \Vert_{L^2({\mathbb S}^2)}=1\,,\\
\mathbb{P}_1 \tilde{\psi}(t) = \mathbb{P}_2\tilde{\psi}(t)  = 0 \quad\text{for all}\quad  t \ge 0\,, \\
|\alpha -\tfrac{\omega}{2}| + |c_1^{-1}| + |c_1^{1}| +  |\beta(t) - \beta_0| + \| \tilde{\psi}(t) \|_{H^2({\mathbb S}^2)} \lesssim \|  \psi_0 - \widehat{\psi}_0 \|_{H^2({\mathbb S}^2)} \,.
\end{array}
\right.
\end{equation*}
\end{theorem}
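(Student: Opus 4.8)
The plan is to handle the three assertions by three different mechanisms, all resting on the conservation laws of Corollary~\ref{iom} together with the spectral gap of $-\Delta$ between its second eigenvalue $-6$ and its third eigenvalue $-12$. I would begin with assertion (iii), which is the structural backbone. Write $\psi(t)=\sum_{k\ge1}\mathbb{P}_k\psi(t)$ and set $a_k(t)=\|\mathbb{P}_k\psi(t)\|_{L^2({\mathbb S}^2)}^2$. The idea is to extract from the conserved functionals a nonnegative conserved quantity that only sees the modes $k\ge3$. Three ingredients are available: the kinetic energy, which reads $Q_1=\sum_k k(k+1)a_k$; the enstrophy $\iint_{{\mathbb S}^2}(\Delta\psi+2\omega\sin\theta)^2\,{\rm d}\sigma$; and the degree-one data of Corollary~\ref{iom}(iii). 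Since $\sin\theta\propto Y_1^0$, the cross term $\iint_{{\mathbb S}^2}\Delta\psi\,\sin\theta\,{\rm d}\sigma=-2\langle\psi,\sin\theta\rangle$ equals (up to a constant) $c_1^0$, which is conserved; hence $Q_2:=\iint_{{\mathbb S}^2}(\Delta\psi)^2\,{\rm d}\sigma=\sum_k k^2(k+1)^2a_k$ is itself conserved. Likewise conservation of $c_1^0$ and of $|c_1^{\pm1}|$ gives that $Q_3:=a_1=\|\mathbb{P}_1\psi\|^2$ is conserved.

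I would then form the combination
\[
\mathcal{C} := Q_2 - 6Q_1 + 8Q_3 = \sum_{k\ge3} k(k+1)\big(k(k+1)-6\big)\,a_k ,
\]
whose $k=1$ and $k=2$ coefficients vanish by design, so that $\mathcal{C}\ge0$ is conserved and, using $k(k+1)-6\ge\tfrac12 k(k+1)$ for $k\ge3$, controls $\|\tilde\psi\|_{H^2}^2$ for $\tilde\psi=(I-\mathbb{P}_1-\mathbb{P}_2)\psi$. Since $\mathcal{C}$ vanishes on $\psi_0\in\mathbb{E}_1+\mathbb{E}_2$, it is $O(\|\widehat\psi_0-\psi_0\|_{H^2}^2)$ initially and stays so. The degree-two mass $a_2=\tfrac16\big(Q_1-2Q_3-\sum_{k\ge3}k(k+1)a_k\big)$ is then pinned to $\beta_0^2$ up to $O(\|\widehat\psi_0-\psi_0\|_{H^2})$, which yields $\beta(t)=\pm\|\mathbb{P}_2\psi(t)\|$ and $Y(t)=\mathbb{P}_2\psi(t)/\beta(t)$; the degree-one part is rigid because $c_1^0$ is conserved (fixing $\alpha$) while $c_1^{\pm1}$ merely rotate with the stated phases ${\rm e}^{\mp{\rm i}\omega t}$ and have initial magnitude $O(\|\widehat\psi_0-\psi_0\|_{H^2})$. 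Assembling these pieces gives the decomposition of (iii).

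For assertion (i) I would invoke the zonal criterion of Theorem~\ref{theoremzonal}. Writing $s=\sin\theta$ and $Y_2^0(\theta)=c_2(3s^2-1)$ with $c_2>0$, both $f'(\theta)$ and $g'(\theta)=\partial_\theta\Omega_0$ carry a common factor $\cos\theta$, and the ratio in~\eqref{szc} collapses to a quotient of affine functions of $s$ of the form $\big((\alpha-A)+6\beta c_2 s\big)/\big(-2\alpha-36\beta c_2 s\big)$. When the zonal vorticity gradient does not vanish on $(-1,1)$, i.e. the denominator has no zero in $[-1,1]$, one can choose $A$ so that the numerator's zero and the denominator's zero lie on opposite sides of $[-1,1]$, making the quotient single-signed and bounded away from $0$; Theorem~\ref{theoremzonal} then applies verbatim and gives $H^2$ stability. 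The delicate case is when the vorticity has an interior critical point, so that the quotient changes sign and the second variation is indefinite; here I would replace the quadratic argument by Arnold's second stability theorem, i.e. a convexity lower bound on the energy–Casimir functional $\iint_{{\mathbb S}^2} K(\Omega+2\omega\sin\theta)\,{\rm d}\sigma$. This is precisely where the hypothesis ``bounded vorticity'' enters: it makes that convexity bound uniform.

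Assertion (ii) admits a direct construction: keeping $\beta Y$ fixed and replacing $\alpha=-\tfrac\omega2$ by $\alpha_n=-\tfrac\omega2+\tfrac1n$ produces, by~\eqref{eotw} with $j=2$, a degree-two Rossby–Haurwitz wave that is no longer stationary but travels with speed $c_n=\tfrac23(\alpha_n+\tfrac\omega2)=\tfrac{2}{3n}\to0$. The exact solution is $\widehat\psi^n(t)=\alpha_n\sin\theta+\beta\,Y(\varphi-c_nt,\theta)$, with $\|\widehat\psi^n_0-\psi_0\|_{H^2}=\tfrac1n\|\sin\theta\|_{H^2}\to0$. Because $Y$ is non-zonal it has a nonzero $Y_2^m$ with $m\neq0$, so azimuthal rotation by an angle $\phi$ multiplies that coefficient by ${\rm e}^{-{\rm i}m\phi}$; choosing $t_n$ with $c_nt_n$ equal to an angle $\phi$ maximizing $\|Y-R_\phi Y\|_{L^2}$ (here $R_\phi$ is rotation about the polar axis) gives $\|\widehat\psi^n(t_n)-\psi_0\|_{L^2}\ge|\beta|\,\|Y-R_\phi Y\|_{L^2}-o(1)\ge\epsilon$, a fixed positive number, so arbitrarily small $H^2$ perturbations drift an $O(1)$ distance in $L^2$. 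The main obstacle I anticipate is the indefinite sub-case of (i): once the sign-definite second variation is lost, nonlinear stability no longer follows from a single coercive quadratic form and one must control the full energy–Casimir functional through the convexity argument, which is exactly why the boundedness of the vorticity must be assumed. By contrast (ii) is elementary once one notices that detuning $\alpha$ converts the stationary wave into a slowly rotating one, and (iii) is essentially bookkeeping in the conserved quantities, the only genuine input being the conservation of $\iint_{{\mathbb S}^2}(\Delta\psi)^2\,{\rm d}\sigma$, which hinges on the separate conservation of $c_1^0$.
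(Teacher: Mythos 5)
Your parts (ii) and (iii) are correct and essentially coincide with the paper's own proof. For (ii), the paper uses exactly your detuning $\alpha\mapsto\alpha+\tfrac1n$, producing a travelling wave whose relative phase drift against $\psi_0(t)$ costs an $O(1)$ amount in $L^2$ while the initial $H^2$ distance is $O(1/n)$. For (iii), your conserved combination $Q_2-6Q_1+8Q_3$ is, coefficient by coefficient, the paper's energy--Casimir functional $\mathcal{E}(\psi)=\iint_{{\mathbb S}^2}\big[\tfrac12|U|^2-\tfrac1{12}|\Omega+2\omega\sin\theta|^2+\omega\sin\theta\,\Omega-|\mathbb{P}_1\psi|^2\big]\,{\rm d}\sigma$ taken together with the kinetic energy; your bookkeeping with $a_2$, the phases ${\rm e}^{\mp{\rm i}\omega t}$ from Corollary~\ref{iom}(iii), and the normalization of $Y(t)$ is the same argument in different packaging.

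Part (i), however, has a genuine gap, and it is precisely the hard core of the theorem. Note first a small inaccuracy in your ``delicate'' case: for $\psi_0$ as in \eqref{rh2} one has $\Delta\psi_0=-6\psi_0+4\alpha\sin\theta$, and the only choice of $A$ keeping the quotient in \eqref{szc} bounded when $g'$ vanishes inside $(-1,1)$ is $A=\tfrac{2\alpha}{3}$, which makes the quotient identically $-\tfrac16$ --- it does not change sign, it degenerates. The literal condition \eqref{szc} is then satisfied, but the second variation $\iint\big[|\delta U|^2-\tfrac16(\delta\Omega)^2\big]\,{\rm d}\sigma$ vanishes identically on $\mathbb{E}_2$ (and is positive on $\mathbb{E}_1$, negative on higher modes), so the quadratic method yields nothing beyond your part (iii). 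This degeneracy is structural, not a failure of cleverness in choosing $K$: every conserved quantity you have put on the table (energy, enstrophy, the $c_1^m$) is quadratic, and restricted to $\mathbb{E}_2$ the quadratic invariants depend only on $a_2=\|\mathbb{P}_2\psi\|_{L^2}^2$. They therefore cannot distinguish the zonal point $\beta Y_2^0$ from any other point of the sphere $\{\|\mathbb{P}_2\psi\|_{L^2}=|\beta|\}$ in the five-dimensional space $\mathbb{E}_2$, and so no energy--Casimir functional with quadratic Casimir --- convex or not --- can rule out a slow migration of the degree-2 component from $Y_2^0$ to a non-zonal harmonic of the same norm, which is exactly the instability channel left open by (iii). ``Arnold's second stability theorem'' is likewise unavailable: it requires a convex Casimir (the $F'>0$ regime), whereas here the Casimir is forced to be $K(x)=-x^2/12$, which is concave; boundedness of the vorticity cannot resurrect a convexity bound that does not exist.

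What the paper actually does at this point is irreducibly non-quadratic: it invokes the higher Casimirs $I_k(\psi)=\iint_{{\mathbb S}^2}(\Delta\psi)^k\,{\rm d}\sigma$ for $k\in\{3,4,5\}$ --- and this, not any convexity statement, is where the hypothesis of bounded vorticity enters, via the Cauchy--Schwarz estimate $|I_k(\psi(\cdot,t))-I_k(\psi_0)|\lesssim\Vert\Delta\psi(\cdot,0)-\Delta\psi_0\Vert_{L^2}$ in \eqref{opsie}. Computing the explicit integrals of products of the $Y_2^m$ then turns the conservation of $I_3,I_4,I_5$ into algebraic constraints \eqref{opsi2}, \eqref{opsi3}, \eqref{opsi4} on $(c_2^0,c_2^1,c_2^2)$, which are shown to be mutually incompatible with any $O(1)$ transfer out of $c_2^0$: for $\alpha\neq 0$ the pair \eqref{opsi2}, \eqref{opsi4} pins the ratio of $|c_2^2|^2$ to $|c_2^1|^2$ and contradicts the cubic relation at order $\delta^3$, while for $\alpha=0$ the quartic moment is uninformative and the quintic relation \eqref{asy3} is needed. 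None of this mechanism appears in your proposal (your split into cases is also foreign to the paper, although your ``nice'' case, where $g'\neq 0$ on $(-1,1)$ and $A$ can be chosen to make the quotient \emph{positive} --- mere sign-definiteness is not enough, since a negative quotient of magnitude below $\tfrac12$ leaves the form indefinite --- is a correct application of Theorem~\ref{theoremzonal} for those restricted parameters). As it stands, assertion (i) is unproven in your argument for all $(\alpha,\beta)$ with an interior critical point of the zonal vorticity, including the hardest case $\alpha=0$.
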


\begin{remark}
It seems natural to conjecture that all Rossby-Haurwitz waves of degree 2 are \textit{orbitally} stable. It should be possible to adapt the proof of assertion $(i)$ to prove this conjecture, 
though this would entail considerable technical complications.
\end{remark}

\begin{proof} $(i)$ We begin with a reduction: by using the symmetries of the problem (see Subsection~\ref{subsectionsymmetries}), more specifically the scaling and change-of-frame symmetries, it suffices to prove $(i)$ in the case $\omega=0$, $\beta=1$.

Consider a smooth perturbation 
\begin{equation}\label{rh2p}
\psi(\varphi,\theta,t)= \sum_{l= 1}^\infty \Big\{ \sum_{m=-l}^l c_l^m(t)\, Y_l^m(\varphi,\theta)\Big\}
\end{equation}
of the zonal flow \eqref{rh2}, expressed in terms of the spherical harmonics $Y_l^m$ by means of the time-dependent 
coefficients $c_l^m(t) \in {\mathbb C}$. Since $\psi$ is real-valued, \eqref{a-mm} yields
\begin{equation}\label{m-m}
c_l^{-m}(t)=\int_{{\mathbb S}^2} \psi\,\overline{Y_l^{-m}}\,d\sigma 
=(-1)^m \int_{{\mathbb S}^2} \psi\,Y_l^{m}\,d\sigma=(-1)^m \overline{\int_{{\mathbb S}^2} \psi\,\overline{Y_l^{m}}\,d\sigma}
=(-1)^m\,\overline{c_l^{m}(t)}\,,\quad |m| \le l\,.
\end{equation}
Furthermore, we know that
\begin{equation}\label{c1}
c_1^0(t)=c_1^0(0)\,,\quad |c_1^{\pm 1}(t)=|c_1^{\pm 1}(0)|\,,\qquad t \ge 0\,.
\end{equation}

Assuming that initially (at time $t=0$) the solution \eqref{rh2p} of \eqref{Eomega} is $\varepsilon$-close 
to the Rossby flow \eqref{rh2} in the $H^2$-norm, with $\varepsilon>0$ small, we have
\begin{align}\label{sin1}
\Vert \Delta [\psi_0 -\psi(\cdot,0)]\Vert_{L^2}^2 =& 4\,| \alpha - c_1^0(0)|^2 + 4\,|c_1^{-1}(0)|^2 + 
4\,|c_1^{1}(0)|^2 + 36\, |\beta - c_2^0(0)|^2 \\
&\quad+ 36 \sum_{0<|m| \le 2}^l |c_2^m(0)|^2+ \sum_{l=3}^\infty l^2(l+1)^2\Big\{ \sum_{m=-l}^l |c_l^m(0)|^2\Big\} < \varepsilon^2\,.\nonumber
\end{align}
On the other hand, the conservation of energy for the solution \eqref{rh2p} to \eqref{Eomega} reads
$$\sum_{l= 1}^\infty l(l+1) \Big\{ \sum_{m=-l}^l |c_l^m(t)|^2 \Big\}=\sum_{l= 1}^\infty l(l+1) \Big\{ \sum_{m=-l}^l |c_l^m(0)|^2 \Big\}\,,\qquad t \ge 0\,,$$
which, using \eqref{c1}, can be re-written in the form
\begin{equation}\label{seq1}
\sum_{l= 2}^\infty l(l+1) \Big\{ \sum_{m=-l}^l |c_l^m(t)|^2 \Big\}=\sum_{l=2}^\infty l(l+1) \Big\{ \sum_{m=-l}^l |c_l^m(0)|^2 \Big\}\,,\qquad t \ge 0\,.
\end{equation}
The time-invariance of the integral $\int_{{\mathbb S}^2} |\Delta \psi |^2\,{\rm d}\sigma$ gives the equality
\begin{align*}
& 4 \sum_{m=-1}^1 |c_1^{m}(t)|^2 + 
\sum_{l=2}^\infty l^2(l+1)^2 \Big\{ \sum_{m=-l}^l  |c_l^m(t)|^2 \Big\} =
4 \sum_{m=-1}^1 |c_1^{m}(0)|^2 + 
\sum_{l=2}^\infty l^2(l+1)^2 \Big\{ \sum_{m=-l}^l  |c_l^m(0)|^2 \Big\}\,,\qquad t \ge 0\,.\nonumber
\end{align*}
Using \eqref{c1}, we infer that 
\begin{equation}\label{tim0}
\sum_{l=2}^\infty l^2(l+1)^2 \Big\{ \sum_{m=-l}^l |c_l^m(t)|^2 \Big\}=\sum_{l=2}^\infty l^2(l+1)^2 \Big\{ \sum_{m=-l}^l |c_l^m(0)|^2 \Big\} \,,\qquad t \ge 0\,.
\end{equation}
The identities \eqref{seq1} and \eqref{tim0} ensure
$$\sum_{l=2}^\infty [l^2(l+1)^2 - 6l(l+1)] \Big\{ \sum_{m=-l}^l |c_l^m(t)|^2 \Big\}=
\sum_{l=2}^\infty [l^2(l+1)^2 - 6l(l+1)] \Big\{ \sum_{m=-l}^l |c_l^m(0)|^2 \Big\}\,,\qquad t \ge 0\,,$$
which, since the $l=2$ coefficient vanishes, can be written
\begin{equation}\label{ineq3}
\sum_{l=3}^\infty [l^2(l+1)^2 - 6l(l+1)] \Big\{ \sum_{m=-l}^l |c_l^m(t)|^2 \Big\} < \varepsilon^2\,,\qquad t \ge 0\,.
\end{equation}
Recalling \eqref{c1}, we conclude that the instability of the zonal Rossby flow \eqref{rh2} can only be caused by   
a substantial energy transfer between the spherical harmonic components of mode $l=2$. To rule this out, we 
rely on the time-invariance of the integrals
\begin{equation}\label{opsi}
I_k(\psi(\cdot,t))= \int_{{\mathbb S}^2}  \Big(\Delta\psi(\cdot,t) \Big)^k\,{\rm d}\sigma\,,\qquad k \in \{2,3,5\}\,,
\end{equation}
which ensures, using the Cauchy-Schwarz inequality and the boundedness of the vorticity,
\begin{equation}\label{opsie}
| I_k(\psi(\cdot,t)) - I_k(\psi_0)| = | I_k(\psi(\cdot,0)) - I_k(\psi_0)| \\
\lesssim \Vert \Delta \psi(\cdot,0) - \Delta \psi_0 \Vert_{L^2} < \varepsilon\,.
\end{equation}

We now take advantage of \eqref{ineq3} and of the specific structure of the spherical harmonics to elucidate the leading order 
of the integrals in \eqref{opsi} as $\varepsilon \to 0$. For this, note that integration by parts yields the recurrence formula 
$$\int_{-\frac{\pi}{2}}^{\frac{\pi}{2}} \cos^{2k+1}\theta\,{\rm d}\theta= \frac{2k}{2k+1} \int_{-\frac{\pi}{2}}^{\frac{\pi}{2}} \cos^{2k-1}\theta\,{\rm d}\theta\,,
\qquad k \ge 1\,,$$
which, since $\int_{-\frac{\pi}{2}}^{\frac{\pi}{2}} \cos\theta\,{\rm d}\theta=2$, yields the value of the Wallis integrals
$$\int_{-\frac{\pi}{2}}^{\frac{\pi}{2}} \cos^3\theta\,{\rm d}\theta=\tfrac{4}{3}\,,\quad 
\int_{-\frac{\pi}{2}}^{\frac{\pi}{2}} \cos^5\theta\,{\rm d}\theta=\tfrac{16}{15}\,,\quad
\int_{-\frac{\pi}{2}}^{\frac{\pi}{2}} \cos^7\theta\,{\rm d}\theta=\tfrac{32}{35}\,,\quad 
\int_{-\frac{\pi}{2}}^{\frac{\pi}{2}} \cos^9\theta\,{\rm d}\theta=\tfrac{256}{315}\,,\quad 
\int_{-\frac{\pi}{2}}^{\frac{\pi}{2}} \cos^{11}\theta\,{\rm d}\theta=\tfrac{512}{693}\,.$$
Taking into account the explicit formulas for the spherical harmonics (see the Appendix), we can now compute
\begin{align*}
&\int_{{\mathbb S}^2}  (Y_2^0)^2 \,{\rm d}\sigma =1\,,\quad \int_{{\mathbb S}^2}  Y_2^{-1}Y_2^1 \,{\rm d}\sigma =-1\,,\quad 
\int_{{\mathbb S}^2}  Y_2^{-2}Y_2^2 \,{\rm d}\sigma =1\,,\quad \int_{{\mathbb S}^2}  (Y_2^0)^3 \,{\rm d}\sigma =\tfrac{\sqrt{5}}{7\sqrt{\pi}}\,,\\
&\int_{{\mathbb S}^2}  Y_2^0Y_2^{-1}Y_2^1 \,{\rm d}\sigma =-\tfrac{\sqrt{5}}{14\sqrt{\pi}}\,,\quad 
\int_{{\mathbb S}^2}  Y_2^0Y_2^{-2}Y_2^2 \,{\rm d}\sigma =-\tfrac{\sqrt{5}}{7\sqrt{\pi}}\,,\quad
\int_{{\mathbb S}^2}  Y_2^2(Y_2^{-1})^2 \,{\rm d}\sigma =\tfrac{\sqrt{15}}{7\sqrt{2\pi}}\,,\quad \int_{{\mathbb S}^2}  Y_2^{-2}(Y_2^1)^2 \,{\rm d}\sigma =\tfrac{\sqrt{15}}{7\sqrt{2\pi}}\,,\\
& \int_{{\mathbb S}^2}  (Y_2^0)^4 \,{\rm d}\sigma =\tfrac{15}{28\pi}\,,\quad
\int_{{\mathbb S}^2}  (Y_2^0)^2Y_2^{-2} Y_2^2 \,{\rm d}\sigma =\tfrac{5}{28\pi}\,,\quad 
\int_{{\mathbb S}^2}   (Y_2^0)^2Y_2^{-1} Y_2^1\,{\rm d}\sigma =-\tfrac{5}{28\pi}\,,\quad 
\int_{{\mathbb S}^2}  Y_2^0 Y_2^2 (Y_2^{-1})^2 \,{\rm d}\sigma =0\,,\\ 
&\int_{{\mathbb S}^2}  Y_2^0 Y_2^{-2} (Y_2^{1})^2 \,{\rm d}\sigma =0\,,\quad 
\int_{{\mathbb S}^2}   (Y_2^2)^2(Y_2^{-2})^2\,{\rm d}\sigma =\tfrac{5}{14\pi}\,,\quad 
\int_{{\mathbb S}^2}  (Y_2^1)^2 (Y_2^{-1})^2 \,{\rm d}\sigma =\tfrac{5}{14\pi}\,,\quad 
\int_{{\mathbb S}^2}  Y_2^{-2} Y_2^2 Y_2^{-1}Y_2^1 \,{\rm d}\sigma =-\tfrac{5}{28\pi}\,,\\ 
&\int_{{\mathbb S}^2}   (Y_2^0)^5\,{\rm d}\sigma =\tfrac{5^2 \cdot 199 \sqrt{5}}{154\pi\sqrt{\pi}}\,,\quad 
\int_{{\mathbb S}^2}  (Y_2^0)^3 Y_2^{-2} Y_2^2 \,{\rm d}\sigma =-\tfrac{5\sqrt{5}}{154\pi\sqrt{\pi}}\,,
\quad \int_{{\mathbb S}^2}  (Y_2^0)^3 Y_2^{-1} Y_2^1 \,{\rm d}\sigma =-\tfrac{5^2\sqrt{5}}{4\cdot 154\pi\sqrt{\pi}}\,.
\end{align*}
We now use \eqref{opsi} and the multinomial formula
$$(x_1 + \dots + x_n)^k = \sum_{k_1+ \dots + k_n=k} \frac{k!}{k_1! \cdots k_n!} \prod_{j=1}^n x_j^{k_j}\,,$$
under the assumption \eqref{sin1}, which ensures \eqref{ineq3}. Noticing that the $\varphi$-dependence of the 
integrand shows that the integral $\int_{{\mathbb S}^2}  \prod_{j=1}^n (Y_2^{m_j})^{k_j} \,{\rm d}\sigma$  vanishes 
if $\sum_{j=1}^n m_j k_j \neq 0$, from \eqref{m-m} we obtain
\begin{align}
 I_2((\psi(\cdot,t)) & =   4 |c_1^0(t)|^2 + 8\, |c_1^1(t)|^2 
+  \sum_{l=2}^\infty l^2(l+1)^2 \Big\{|c_l^0(t)|^2+ 2 \sum_{m=1}^l |c_l^m(t)|^2 \Big\} \,,\label{exi2p}\\
 I_2(\psi_0) &  =4  \alpha^2 + 36 \,.\label{exi20}
\end{align}
Since \eqref{c1} and \eqref{sin1} ensure
\begin{equation} \label{ap2}
\Big| |c_1^0(t)|^2 -  \alpha^2 \Big| = 
| c_1^0(t)- \alpha| \cdot \Big|   c_1^0(t) + \alpha \Big| = | c_1^0(0)- \alpha| \cdot \Big|  c_1^0(0) + \alpha \Big| \lesssim \varepsilon \,,
\end{equation}
we see that \eqref{ineq3} and \eqref{exi2p}-\eqref{exi20} yield
$$I_2((\psi(\cdot,t)) - I_2(\psi_0)  = 36\,\Big\{ [c_2^0(t)]^2 + 2 |c_2^2(t)|^2 + 2 |c_2^1(t)|^2  - 1\Big\} + {\rm O}(\varepsilon)\,.$$
From \eqref{opsie} we therefore get
\begin{equation}\label{opsi2}
[c_2^0(t)]^2 + 2\, |c_2^2(t)|^2 + 2\, |c_2^1(t)|^2 =  1 + {\rm O}(\varepsilon)\,,
\end{equation}

We now show that
\begin{align}
\label{opsi3}
\tfrac{7}{15} \alpha^2 c_2^0(t) + [c_2^0(t)]^3 +  6\, c_2^0(t) |c_2^2(t)|^2 - 3\,c_2^0(t) |c_2^1(t)|^2 + 3 \sqrt{6} \,{\frak Re} \Big[ c_2^2(t)\,\Big(c_2^{1}(t)\Big)^2\Big]  =\tfrac{7}{15}\alpha^2 + 1 + {\rm O}(\varepsilon)\,,
\end{align}
where we denote by ${\frak Re}(z)$ the real part of the complex number $z$. For this, note that $c_1^{\pm 1}(t)$ and $c_l^m(t)$ for $l \ge 3$ are 
${\rm O}(\varepsilon)$, in view of \eqref{c1}, \eqref{sin1} and \eqref{ineq3}. Since $\int_{{\mathbb S}^2}  \prod_{j=1}^3 (Y_2^{m_j})^{k_j} \,{\rm d}\sigma$ vanishes if $\sum_{j=1}^3 m_j k_j \neq 0$, and
$$ \int_{{\mathbb S}^2} (Y_1^0)^3 \,{\rm d}\sigma=\int_{{\mathbb S}^2} Y_1^0 (Y_2^0)^2 \,{\rm d}\sigma=\int_{{\mathbb S}^2} Y_1^0 Y_2^{-2}Y_2^2 \,{\rm d}\sigma=
\int_{{\mathbb S}^2} Y_1^0 Y_2^{-1}Y_2^1 \,{\rm d}\sigma=0$$
because in each case we integrate an odd function of $\theta$ over $\big[ -\frac{\pi}{2},\,\frac{\pi}{2}\big]$, while
$$\int_{{\mathbb S}^2} (Y_1^0)^2 Y_2^0 \,{\rm d}\sigma=\tfrac{3\sqrt{5}}{8\sqrt{\pi}} \,\int_{-\frac{\pi}{2}}^{\frac{\pi}{2}} (3\sin^2\theta-1) \sin^2\theta\cos\theta\,{\rm d}\theta=\tfrac{1}{\sqrt{5\pi}}\,,$$
we obtain 
\begin{align*}
I_3(\psi(\cdot,t))  &=- \tfrac{72}{\sqrt{5\pi}}|c_1^0(t)|^2 c_2^0(t) \\
&\qquad -\tfrac{216 \sqrt{5}}{7\sqrt{\pi}} \, \Big\{ [c_2^0(t)]^3 +  6\, c_2^0(t) |c_2^2(t)|^2 - 3\,c_2^0(t) |c_2^1(t)|^2 + 3 \sqrt{6} \,{\frak R}e 
\Big[ c_2^2(t)\,\Big(c_2^{1}(t)\Big)^2\Big]\Big\} + {\rm O}(\varepsilon) \,.
\end{align*}
Due to \eqref{c1} and \eqref{opsie} for $k=3$, the relation \eqref{opsi3} now emerges by subtracting from the above
$$I_3(\psi_0)
=-\tfrac{72}{\sqrt{5\pi}} \alpha^2 -\tfrac{216 \sqrt{5}}{7\sqrt{\pi}} $$
and taking \eqref{ap2} into account.

We now investigate the leading order of $I_4((\psi(\cdot,t))$. Again, since $c_1^{\pm 1}(t)$ and $c_l^m(t)$ for $l \ge 3$ 
are ${\rm O}(\varepsilon)$, for this we need only to keep track of the integrals 
$\int_{{\mathbb S}^2}  \prod_{j=1}^3 (Y_1^0)^k(Y_2^{m_j})^{k_j} \,{\rm d}\sigma$ with $\sum_{j=1}^3 m_j k_j =0$ and $k+ \sum_{j=1}^3 k_j=4$. 
We compute
\begin{align*}
&\int_{{\mathbb S}^2}  (Y_1^0)^4 \,{\rm d}\sigma = \tfrac{9}{20\pi}\,,\quad \int_{{\mathbb S}^2}  Y_1^0(Y_2^0)^3 \,{\rm d}\sigma =0\,,\quad 
\int_{{\mathbb S}^2}  (Y_1^0)^2(Y_2^0)^2 \,{\rm d}\sigma = \tfrac{11}{28\pi}\,,\\
&\int_{{\mathbb S}^2}  (Y_1^0)^2Y_2^2Y_2^{-2} \,{\rm d}\sigma = \tfrac{3}{14\pi}\,,\qquad  
\int_{{\mathbb S}^2}  (Y_1^0)^2 Y_2^1 Y_2^{-1} \,{\rm d}\sigma = \tfrac{9}{28\pi}\,,
\end{align*}
and infer that
\begin{align*}
I_4((\psi(\cdot,t)) &= \tfrac{36}{5\pi}\, | c_1^0(t)|^4 + \tfrac{6^3 \cdot 11}{7\pi}| c_1^0(t)|^2 [c_2^0(t)]^2 + \tfrac{6^4 \cdot 4}{7\pi}\,|c_1^0(t)|^2 |c_2^2(t)|^2  
- \tfrac{6^4 \cdot 3}{7\pi}|c_1^0(t)|^2 |c_2^1(t)|^2\,,\\
&\qquad + \tfrac{6^4 \cdot 15}{28\pi} \Big\{ [c_2^0(t)]^2 + 2  \Big( |c_2^2(t)|^2 + |c_2^1(t)|^2 \Big)\Big\}^2 + {\rm O}(\varepsilon)\,.\end{align*}
Subtracting from this the relation
$$I_4(\psi_0)=\tfrac{36}{5\pi} \alpha^4 + \tfrac{6^3 \cdot 11}{7\pi}\alpha^2 + \tfrac{6^4 \cdot 15}{28\pi}\,,$$
invoking \eqref{opsie} with $k=4$ and \eqref{c1}, we get
\begin{equation}\label{opsi4} 
 22 \alpha^2 [c_2^0(t)]^2 + 48 \alpha^2 |c_2^2(t)|^2   
- 36 \alpha^2 |c_2^1(t)|^2  + 45 \Big\{ [c_2^0(t)]^2 + 2  \Big( |c_2^2(t)|^2 + |c_2^1(t)|^2 \Big)\Big\}^2 = 22 \alpha^2 + 45 + {\rm O}(\varepsilon)\,,
\end{equation}
using \eqref{ap2} and the fact that \eqref{sin1} ensures
\begin{align}
\Big| |c_1^0(t)|^4 -  \alpha^4\Big| 
\lesssim \varepsilon\,.\nonumber
\end{align}

We can now prove the stability of the zonal solution \eqref{rh2} for $\alpha \neq 0$. In this case, from 
\eqref{opsi2} and \eqref{opsi4} we get
\begin{equation}\label{opsi44}
[c_2^0(t)]^2 + \tfrac{24}{11} \, |c_2^2(t)|^2 - \tfrac{18}{11} \, |c_2^1(t)|^2 = 1  + {\rm O}(\varepsilon)\,,
\end{equation}
which, subtracted from \eqref{opsi2}, yields
\begin{equation}\label{opsigc1}
|c_2^2(t)|^2 - 15 \, |c_2^1(t)|^2 ={\rm O}(\varepsilon)\,.
\end{equation}
In view of the continuous dependence on data guaranteed by the well-posedness of \eqref{Eomega}, 
invoking \eqref{c1} and the fact that \eqref{ineq3} is guaranteed by \eqref{sin1}, we see that the zonal Rossby 
flow $\psi_0$ given by \eqref{rh2} is unstable only if there exists some $\delta_0>0$ and a sequence of initial data $\{\psi_N(\cdot,0)\}_{N \ge 1}$ 
converging to $\Psi$ in $H^2$ and such that for every $N_0 \ge 1$ large enough there exists a time $t_{N_0}>0$ with
\begin{equation}\label{|instr0}
|c_{N_0,2}^2(t_{N_0})|^2 + |c_{N_0,2}^1(t_{N_0})|^2 = 2\delta_0^2 \,,
\end{equation}
where $\{c_{N_0,l}^m(t)\}_{l \ge 1,\,|m| \le l}$ are the coefficients of the expansion of the solution $\psi_{N_0}(\cdot,t)$ to \eqref{Eomega} with 
initial data $\psi_{N_0}(\cdot,0)$. The validity of \eqref{|instr0} for some $\delta_0>0$ ensures a similar relation for any $\delta \in (0,\delta_0)$, at 
some time $T>0$. We can thus analyse the limit $\delta \to 0$: from \eqref{opsi2}, \eqref{opsigc1} and \eqref{|instr0} we get
\begin{equation}\label{cas1}
\begin{cases}
&[c_{N,2}^0(t_{N})]^2 = 1 - 4\delta^2 + {\rm O}(\varepsilon)\,,\\
&|c_{N,2}^2(t_{N})|^2 = \tfrac{15}{8}\,\delta^2 + {\rm O}(\varepsilon)\,,\\
&|c_{N,2}^1(t_{N})|^2 = \tfrac{1}{8}\,\delta^2 + {\rm O}(\varepsilon)\,.
\end{cases}
\end{equation}
Thus
$$c_{N,2}^0(t_{N})=(1-2\delta^2 - \tfrac{1}{2}\,\delta^4) + {\rm O}(\delta^6) + {\rm O}(\varepsilon)\,,$$
which, together with the last two relations in \eqref{cas1} makes the matching at ${\rm O}(\delta^3)$ in \eqref{opsi3} impossible. This 
contradiction proves the stability of \eqref{rh2} if $\alpha \neq 0$. 

It remains to deal with the case $\alpha= 0$, a setting in which \eqref{opsi3} simplifies to 
\begin{equation}\label{opsi3s}
[c_2^0(t)]^3 +  6\, c_2^0(t) |c_2^2(t)|^2 - 3\,c_2^0(t) |c_2^1(t)|^2 + 3 \sqrt{6} \,{\frak R}e 
\Big[ c_2^2(t)\,\Big(c_2^{1}(t)\Big)^2\Big]  =1 + {\rm O}(\varepsilon)\,.
\end{equation}
but \eqref{opsi4} does not provide additional information with respect to \eqref{opsi2}, so that we can not rely on \eqref{opsigc1}. 
To compensate for the ineffectiveness of \eqref{opsi4} we have to take advantage of \eqref{opsie} with $k=5$. Assuming instability, we find 
for every $\delta>0$ small enough an initial data $\varepsilon$-close to \eqref{rh2} such that for the corresponding solution $\psi_N(\cdot,t)$ 
at some time $T_N>0$ we have
\begin{equation}\label{|instr}
|c_{N,2}^2(t_{N})|^2 + |c_{N,2}^1(t_{N})|^2 = 2\delta^2\,,
\end{equation}
From \eqref{opsi2} we then get 
$$ [c_{N,2}^0(t_N)]^2 = (1 - 4\delta^2) +  {\rm O}(\varepsilon)\,,$$
so that
\begin{equation}\label{asy1}
c_2^0(t_N) = (1 - 2\delta^2) +{\rm O}(\delta^4) +  {\rm O}(\sqrt{\varepsilon})\,,
\end{equation}
and by writing
$$6\, c_2^0(t) |c_2^2(t)|^2 - 3\,c_2^0(t) |c_2^1(t)|^2 =3\,c_2^0(t) \,\Big\{ |c_2^2(t)|^2 + |c_2^1(t)|^2 \Big\} + 3\,c_2^0(t) \,\Big\{ |c_2^2(t)|^2 -2 |c_2^1(t)|^2 \Big\} $$
the relations \eqref{opsi3s}-\eqref{|instr} yield
\begin{equation}\label{asy2}
|c_{N,2}^2(t_N)|^2 -  2\, |c_{N,2}^1(t_N)|^2 =  {\rm O}(\delta^3) + {\rm O}(\sqrt{\varepsilon})\,.
\end{equation}
We now notice that $I_5(\psi_N(\cdot,t_N)$ is asymptotically an additive ${\rm O}(\varepsilon)$-correction of linear combinations of integrals of the type 
$\int_{{\mathbb S}^2}  \prod_{j=1}^5 (Y_2^{m_j})^{k_j} \,{\rm d}\sigma$ with $\sum_{j=1}^5 m_jk_j=0$, so that by computing the eleven integrals
\begin{align*}
&\int_{{\mathbb S}^2}  (Y_2^0)^5 \,{\rm d}\sigma\,,\quad \int_{{\mathbb S}^2}  (Y_2^0)^3Y_2^{-2}Y_2^2 \,{\rm d}\sigma\,,\quad 
\int_{{\mathbb S}^2}  (Y_2^0)^3 Y_2^{-1}Y_2^1 \,{\rm d}\sigma\,,\quad \int_{{\mathbb S}^2}  (Y_2^0)^2 Y_2^2 (Y_2^{-1})^2 \,{\rm d}\sigma\,,\quad \int_{{\mathbb S}^2}  (Y_2^0)^2Y_2^{-2}(Y_2^1)^2 \,{\rm d}\sigma\,,\\ 
&\int_{{\mathbb S}^2}  Y_2^0(Y_2^{-2})^2(Y_2^2)^2 \,{\rm d}\sigma\,,\quad \int_{{\mathbb S}^2}  Y_2^0(Y_2^{-1})^2 \,{\rm d}\sigma\,,\quad 
\int_{{\mathbb S}^2}  Y_2^0(Y_2^{-1})^2(Y_2^{1})^2 \,{\rm d}\sigma\,,\quad \int_{{\mathbb S}^2}  Y_2^0Y_2^{-2}Y_2^2 Y_2^{-1}Y_2^1 \,{\rm d}\sigma\,,\\
& \int_{{\mathbb S}^2}  (Y_2^2)^2 Y_2^{-2} (Y_2^{-1})^2 \,{\rm d}\sigma \,,
\quad \int_{{\mathbb S}^2}  (Y_2^{-2})^3 Y_2^{2} (Y_2^1)^2 \,{\rm d}\sigma \,,
\end{align*}
we can determine the leading order with respect to $\varepsilon$. If we also bring $\delta \gg \varepsilon$ into play, then 
\eqref{|instr} and \eqref{asy1} ensure that $I_5(\psi_N(\cdot,t_N)$ consists of a linear combination of the first three integrals (already computed) and 
an additive ${\rm O}(\delta^3,\varepsilon)$-correction:
\begin{equation}\label{i52}
I_5(\psi_N(\cdot,t_N)=-\tfrac{6^5 \cdot5^2\cdot 199 \sqrt{5}}{154\pi\sqrt{\pi}} \,\Big\{ [c_2^0(t_N)]^5 + 
\frac{1}{199}\,[c_2^0(t_N)]^3 [c_{N,2}^1(t_N)]^2 -  \frac{4}{199}\,[c_2^0(t_N)]^3 |c_{N,2}^2(t_N)|^2\Big\} + {\rm O}(\delta^3)+ {\rm O}(\varepsilon)\,,
\end{equation}
while
\begin{equation}\label{i520}
I_5(\Psi)=-\tfrac{6^5 \cdot5^2\cdot 199 \sqrt{5}}{154\pi\sqrt{\pi}} \,.
\end{equation}
Due to \eqref{opsie} with $k=5$, and taking into account \eqref{asy1} and \eqref{|instr}, from \eqref{i52}-\eqref{i520} we get
\begin{equation}\label{asy3}
[c_{N,2}^1(t_N)]^2 -  4\, |c_{N,2}^2(t_N)|^2 = 80 \cdot 199\delta^2 + {\rm O}(\delta^3) + {\rm O}(\sqrt{\varepsilon})\,.
\end{equation}
But \eqref{|instr}, \eqref{asy2} and \eqref{asy3} clearly cannot hold simultaneously. The obtained contradiction proves that the zonal 
flow \eqref{rh2} is stable also for $\alpha=0$.\medskip

\noindent $(ii)$ Let
$$
\widehat{\psi}^n_0 = \psi_0 + \tfrac{1}{n}\, \sin \theta = \big( \alpha + \tfrac{1}{n} \big)\, \sin \theta + \beta\, Y(\varphi,\theta)
$$
with $Y \in {\mathbb E}_j$. According to \eqref{eotw}, the solution $\widehat{\psi}^n(t)$ of \eqref{Eomega} with initial data $\widehat{\psi}^n(0)=\widehat{\psi}^n_0$ is the travelling wave
$$\widehat{\psi}^n(t) =\big( \alpha + \tfrac{1}{n} \big)\, \sin \theta + \beta\, Y(\varphi- \widehat{c}\,t,\theta)$$
with propagation speed $\widehat{c}=\frac{2\omega}{j(j+1)} + \left( \alpha + \frac{1}{n} \right)\,\frac{j(j+1)-2}{j(j+1)}$. On the other hand,
$$\psi_0(t) =\alpha \, \sin \theta + \beta\, Y(\varphi-ct,\theta)$$
with $c=\frac{2\omega}{j(j+1)} + \alpha\,\frac{j(j+1)-2}{j(j+1)}$. If (see the Appendix)
$$Y(\varphi,\theta)=\sum_{|m| \le j} c_j\,P_j^m(\sin\theta)\,{\rm e}^{{\rm i}m\varphi}\,,$$
we get
\begin{align*}
\sup_{t >0} \| \widehat{\psi}^n(t) - \psi(t) \|_{L^2({\mathbb S}^2,{\rm d}\sigma)}^2 &= \sup_{t >0} \left\| \tfrac{1}{n} \, \sin \theta + \beta\, Y(\varphi - \widehat{c}\,t, \theta) - \beta\, Y(\varphi-ct, \theta) \right\|_{L^2({\mathbb S}^2,{\rm d}\sigma)}^2 \\
&= \sup_{t >0} \left\| \tfrac{1}{n} \, \sin \theta + \beta\, \sum_{|m| \le j} c_j\,P_j^m(\sin\theta)\,\Big( {\rm e}^{{\rm i}m(\varphi - \widehat{c}\,t)}- {\rm e}^{{\rm i}m(\varphi - c\,t)}\Big)\right\|_{L^2({\mathbb S}^2,{\rm d}\sigma)}^2\\
&= \sup_{t >0}  \Big\{ \frac{4\pi}{3n}  + \beta^2 \sum_{0<|m| \le j} |c_j|^2 \Big( \int_{-\tfrac{\pi}{2}}^{\tfrac{\pi}{2}} \Big|P_j^m(\sin\theta)\Big|^2 \cos\theta \,{\rm d}\theta\Big)
  \Big(\int_0^{2\pi} \Big| {\rm e}^{{\rm i}m(\varphi - \widehat{c}\,t)}- {\rm e}^{{\rm i}m(\varphi - c\,t)}\Big|^2{\rm d}\varphi \Big)\Big\} \\
  &= \sup_{t >0}  \Big\{ \frac{4\pi}{3n}  + 2\pi \beta^2 \sum_{0<|m| \le j} |c_j|^2 \Big( \int_{-\tfrac{\pi}{2}}^{\tfrac{\pi}{2}} \Big|P_j^m(\sin\theta)\Big|^2 \cos\theta \,{\rm d}\theta\Big)
  \Big| 1- {\rm e}^{{\rm i}m(\widehat{c} - c)\,t}\Big|^2\Big\} \\
   &= \sup_{t >0}  \Big\{ \frac{4\pi}{3n} + 4\pi \beta^2 \sum_{0<|m| \le j} |c_j|^2 \Big( \int_{-\tfrac{\pi}{2}}^{\tfrac{\pi}{2}} \Big|P_j^m(\sin\theta)\Big|^2 \cos\theta \,{\rm d}\theta\Big)
  \Big( 1- \cos\big( m\,\tfrac{j(j+1)-2}{nj(j+1)}\,t\big)\Big)\Big\} \\
  &=  \frac{4\pi}{3n} + 4\pi \beta^2 \sum_{0<|m| \le j} |c_j|^2 \Big( \int_{-\tfrac{\pi}{2}}^{\tfrac{\pi}{2}} \Big|P_j^m(\sin\theta)\Big|^2 \cos\theta \,{\rm d}\theta\Big) \\
  &> 4\pi \beta^2 \sum_{0<|m| \le j} |c_j|^2 \Big( \int_{-\tfrac{\pi}{2}}^{\tfrac{\pi}{2}} \Big|P_j^m(\sin\theta)\Big|^2 \cos\theta \,{\rm d}\theta\Big)
\end{align*}
for all $n \ge 1$.
\smallskip

\noindent $(iii)$ Inspired by the approach used in the proof of Theorem \ref{theoremzonal}, we define
$$
\mathcal{E}(\psi) = \iint_{{\mathbb S}^2} \Big[ \tfrac{1}{2}\,|U|^2 - \tfrac{1}{12} \,|\Omega + 2 \omega \sin \theta|^2 + \omega \sin \theta\, \Omega -  |\mathbb{P}_1 \psi|^2 \Big] \,{\rm d}\sigma\,.
$$
This functional is constant along solutions $\psi$ of~\eqref{Eomega}. Expanding a solution $\psi=\psi_0 + \delta\psi$ of~\eqref{Eomega} in terms of spherical harmonics, 
$$\begin{cases}
\psi_0(\varphi,\theta)= -\omega \sqrt{\tfrac{\pi}{3}}\,Y_1^0(\theta) + \beta_0\displaystyle\sum_{|m| \le 2} a_m\,Y_2^m(\varphi,\theta) \quad\text{with}\quad \sum_{|m| \le 2} |a_m|^2=1\,,\\
\delta\psi(\varphi,\theta,t)= \displaystyle\sum_{j \ge 1} \Big\{ \sum_{|m| \le j} c_j^m(t)\, Y_j^m(\varphi,\theta)\Big\} \quad\text{with}\quad c_1^0(t)=c_1^0 \quad\text{and}\quad c_1^{\pm 1}(t)=c_1^{\pm 1}(0)\,{\rm e}^{\mp {\rm i}\omega t} \quad\text{for}\quad t \ge 0\,,
\end{cases}$$
we can write (see the Appendix)
\begin{align*}
\mathcal{E}(\psi) &= -\tfrac{1}{3} \,\Big\{ \Big( c_1^0(0) -\omega \sqrt{\tfrac{\pi}{3}} \Big)^2 + |c_1^{-1}(0)|^2 + |c_1^{1}(0)|^2\Big\} + \tfrac{1}{12} \sum_{j \ge 3} \Big\{ \sum_{|m| \le j} |c_j^m(t)|^2 [6 - j(j+1)] \,j(j+1)\Big\}\\
& \le  -\tfrac{1}{3} \,\Big\{ \Big( c_1^0(0) -\omega \sqrt{\tfrac{\pi}{3}} \Big)^2 + |c_1^{-1}(0)| + |c_1^{1}(0)|\Big\} - \tfrac{1}{2} \sum_{j \ge 3} j(j+1) \Big\{ \sum_{|m| \le j} |c_j^m(t)|^2 \Big\}\,.
\end{align*}
Exploiting the conservation of the kinetic energy, which ensures that the expression
$$\Big( c_1^0(0) -\omega \sqrt{\tfrac{\pi}{3}} \Big)^2 + |c_1^{-1}(0)|^2 + |c_1^{1}(0)|^2 + 3\sum_{|m| \le 2} |c_2^m(t) + \beta_0 a_m|^2 + \sum_{j \ge 3} \tfrac{j(j+1)}{2} \Big\{ \sum_{|m| \le j} |c_j^m(t)|^2 \Big\}$$
is time-independent, setting
$$\alpha=\tfrac{1}{2}\,c_1^0(0)\,\sqrt{\tfrac{\pi}{3}}\,,\quad \beta(t)=\sqrt{\sum_{|m| \le 2} |c_2^m(t) + \beta_0 a_m|^2}\,,\quad Y(t)=\sum_{|m| \le 2} \tfrac{c_2^m(t) + \beta_0 a_m}{\beta(t)}\,Y_2^m\,,\quad \tilde{\psi}(t)=\sum_{j \ge 3} \Big\{ \sum_{|m| \le j} c_j^m(t)\,Y_j^m\Big\}\,,$$
proves the claim.\end{proof}

\section{Bifurcation from Rossby-Haurwitz waves}

This section is dedicated to constructing stationary and travelling-wave solutions of \eqref{Eomega} which are different from the explicit solutions of~\eqref{ellipticpsi} studied above. For this, 
we seek non-zonal solutions to a suitably modified form of equation \eqref{ellipticpsi} 
which bifurcate from Rossby-Haurwitz waves. 

\subsection{The case $\omega=0$.} 

To implement a bifurcation approach, it is convenient to introduce the parameter $\lambda \in {\mathbb R}$, seeking solutions $\psi \in  C^{2,\alpha}(\mathbb{S}^2)$ of the nonlinear elliptic equation
\begin{equation}\label{spdec}
-\Delta \psi + F(\lambda,\psi) =0
\end{equation}
for functions $F \in C^2({\mathbb R}^2,{\mathbb R})$. Note that a solution $\psi \in  C^{2,\alpha}(\mathbb{S}^2)$ to \eqref{spdec} satisfies the Gauss constraint 
\begin{equation}\label{gc2}
\iint_{{\mathcal S}^2} F(\lambda,\psi) \,{\rm d}\sigma = 0\,.
\end{equation}
On the other hand, any solution $\psi \in  C^{2,\alpha}(\mathbb{S}^2)$ of
\begin{equation}\label{spdecc}
-\Delta \psi + F(\lambda,\psi) = \frac{1}{4\pi} \iint_{{\mathcal S}^2} F(\lambda,\psi)\,{\rm d}\sigma
\end{equation}
provides us with a stationary solution of \eqref{Eomega}. Note that the zero function $\psi \equiv 0$ solves \eqref{spdecc}, and the vorticity $\Delta\psi$ of a 
solution $\psi \in  C^{2,\alpha}(\mathbb{S}^2)$ to \eqref{spdecc} satisfies the Gauss constraint \eqref{gaussc}. For this reason, rather than solving \eqref{spdec} with the constraint 
\eqref{gc2}, we will seek solutions to \eqref{spdecc}. 

Since the Laplace-Beltrami operator is bijective from 
$C^{2,\alpha}(\mathbb{S}^2)$ onto  
$$C^{0,\alpha}_0(\mathbb{S}^2)=\Big\{f \in C^{0,\alpha}(\mathbb{S}^2):\ \iint_{{\mathcal S}^2} f\,{\rm d}\sigma = 0\Big\}\,,$$
with a compact inverse that we denote by $\Delta^{-1}$, we can recast equation \eqref{spdecc} in the form
\begin{equation}\label{gp}
{\mathbb F}(\lambda,f)=0 \quad\text{with}\quad {\mathbb F}: {\mathbb R} \times C^{0,\alpha}_0(\mathbb{S}^2) \to C^{0,\alpha}_0(\mathbb{S}^2)\,,\quad 
{\mathbb F}(\lambda,f)= f - \Delta^{-1} \Big\{ F(\lambda,f) - \frac{1}{4\pi} \iint_{{\mathcal S}^2} F(\lambda,f)\,{\rm d}\sigma \Big\}\,.
\end{equation}

Invariant spherical harmonic basis functions, tabulated by their respective subgroups of ${\mathbb O}(3)$, 
are listed in \cite{gss}. For a finite subgroup ${\mathbb G}$ of ${\mathbb O}(3)$ with the property that the subspace of 
${\mathbb G}$-invariant spherical harmonics of degree $n \ge 1$ is one-dimensional, the fact that \eqref{spdecc} is 
equivariant with respect to the natural action of the orthogonal group enables us to consider this problem restricted to ${\mathbb G}$-equivariant functions.
This way, we can take advantage of the symmetries to analyse the formation of regular flow patterns using the Rabinowitz global 
bifurcation approach (see \cite{kiel, rabi}).

\begin{lemma}[The Rabinowitz global bifurcation theorem]\label{gbl}
Let ${\mathbb Y}$ be  a real Banach space and let ${\mathbb F} \in C^2({\mathbb R}  \times {\mathbb Y},{\mathbb Y})$ be such that 
$f \mapsto f-{\mathbb F}(\lambda,f)$ is compact operator from ${\mathbb R}  \times {\mathbb Y}$ to ${\mathbb Y}$ and:

(i) ${\mathbb F}(\lambda,0)=0$ for all $(\lambda,0) \in {\mathbb R}  \times {\mathbb Y}$;

(ii) $\partial_f {\mathbb F}(\lambda^\ast,0) \in {\mathcal L}({\mathbb Y},{\mathbb Y})$ is a 
linear Fredholm operator of index zero and one-dimensional kernel 
${\mathcal N}(\partial_f {\mathbb F}(\lambda^\ast,0))$ generated by some $f^\ast \in {\mathbb Y} \setminus \{0\}$;

(iii) the transversality condition holds, in the sense that $[\partial_{\lambda, f}^2\,{\mathbb F}(\lambda^\ast,0)]\,(1,f^\ast)$ does not belong to the range of the operator 
$\partial_f {\mathbb F}(\lambda^\ast,0))$, where $\partial^2_{\lambda, f}{\mathbb F}(\lambda^\ast,0)=\partial_\lambda[\partial_f
{\mathbb F}(\lambda,0)]\big|_{\lambda=\lambda^\ast} \in {\mathcal L}({\mathbb R},{\mathcal L}({\mathbb Y},{\mathbb Y}))=
{\mathcal L}({\mathbb R} \times {\mathbb Y},{\mathbb Y})$.

Then there exist $\varepsilon >0$, an
open set ${\mathcal O} \subset {\mathbb R}  \times {\mathbb Y}$ with $(\lambda^\ast,0) \in {\mathcal O}$ and a branch of solutions
$$\{(\lambda,f)=(\lambda(s),\,s\,\chi(s)):\ s \in {\mathbb R},\, |s| < \varepsilon\} \subset  {\mathbb R} \times {\mathbb Y}$$
of ${\mathbb F}(\lambda,f)=0$ with $\lambda(0)=\lambda^\ast$, $\chi(0)=f^\ast$,
and such that $s \mapsto \lambda(s) \in {\mathbb R}$, $s \mapsto s\chi(s)\in {\mathbb Y}$ are continuously differentiable on
$(-\varepsilon,\varepsilon)$, with
$$\{(\lambda,f) \in {\mathcal O}:\ {\mathbb F}(\lambda,f)=0,\ f \neq 0\}=\{(\lambda(s),\,s\,\chi(s)):\ 0< |s| < \varepsilon\}.$$

Furthermore, if ${\mathcal S}$ is the closure of the set of nontrivial solutions of ${\mathbb F}(\lambda,f)=0$ in 
${\mathbb R}  \times {\mathbb Y}$, then the connected component ${\mathcal S}^\ast$ of ${\mathcal S}$ to which 
$(\lambda^\ast,0)$ belongs has at least one of the following properties:

(I) ${\mathcal S}^\ast$ is unbounded in ${\mathbb R}  \times {\mathbb Y}$;

(II) there exists some $\lambda \neq \lambda^\ast$ such that $(\lambda,0) \in {\mathcal S}^\ast$.
\end{lemma}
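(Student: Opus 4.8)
The plan is to prove the local statement by the Crandall--Rabinowitz bifurcation-from-a-simple-eigenvalue theorem, and then upgrade it to the global alternative through a Leray--Schauder degree argument in the spirit of Rabinowitz. For the local part, I would exploit that $\partial_f {\mathbb F}(\lambda^\ast,0)$ is Fredholm of index zero with one-dimensional kernel ${\mathcal N}=\mathrm{span}(f^\ast)$, so that its range ${\mathcal R}$ has codimension one. Fixing a topological complement ${\mathbb Y}={\mathcal N}\oplus{\mathcal X}$ and a continuous projection $Q$ onto ${\mathcal R}$, I would perform a Lyapunov--Schmidt reduction: writing $f=s f^\ast + w$ with $w\in{\mathcal X}$ and projecting ${\mathbb F}(\lambda,f)=0$ by $Q$, the implicit function theorem yields a $C^1$ map $w=w(s,\lambda)$ with $w(0,\lambda^\ast)=0$. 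Substituting back leaves a scalar bifurcation equation on the one-dimensional complement of ${\mathcal R}$; since ${\mathbb F}(\lambda,0)=0$ by hypothesis (i), one factors out $s$, and the transversality condition (iii) makes the derivative of the reduced equation with respect to $\lambda$ nonzero at $(0,\lambda^\ast)$. A final application of the implicit function theorem then produces the curve $\lambda=\lambda(s)$, $\chi(0)=f^\ast$, with the stated local uniqueness in a neighbourhood ${\mathcal O}$ of $(\lambda^\ast,0)$.

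For the global part, I would first record that, by hypothesis, $f\mapsto f-{\mathbb F}(\lambda,f)$ is compact, so ${\mathbb F}(\lambda,\cdot)=\mathrm{Id}-{\mathcal K}(\lambda,\cdot)$ is a compact perturbation of the identity and the Leray--Schauder degree $\deg({\mathbb F}(\lambda,\cdot),B,0)$ is well defined on any ball $B$ whose boundary is free of zeros. The crux is to show that the Leray--Schauder index of the trivial solution, $i(\lambda):=\mathrm{ind}({\mathbb F}(\lambda,\cdot),0)$, changes as $\lambda$ crosses $\lambda^\ast$. By the linearization formula, $i(\lambda)=(-1)^{\mu(\lambda)}$, where $\mu(\lambda)$ counts, with algebraic multiplicity, the real eigenvalues of the compact operator $\partial_f{\mathcal K}(\lambda,0)$ exceeding $1$. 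Condition (iii) guarantees that a simple eigenvalue crosses the threshold $1$ with nonzero speed at $\lambda=\lambda^\ast$, so $\mu$ jumps by one and $i(\lambda^\ast-\eta)=-\,i(\lambda^\ast+\eta)$ for all small $\eta>0$.

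With this index jump established, I would invoke the standard topological dichotomy. Let ${\mathcal S}$ be the closure of the nontrivial solution set and ${\mathcal S}^\ast$ the connected component through $(\lambda^\ast,0)$. Arguing by contradiction, suppose ${\mathcal S}^\ast$ is bounded and meets the trivial branch $\{(\lambda,0)\}$ only at $(\lambda^\ast,0)$. Compactness of ${\mathcal K}$ forces ${\mathcal S}^\ast$ to be compact, and Whyburn's lemma lets one enclose it in a bounded open set ${\mathcal O}\subset{\mathbb R}\times{\mathbb Y}$ whose boundary carries no solution of ${\mathbb F}=0$. Slicing ${\mathcal O}$ at the levels $\lambda=\lambda^\ast\pm\eta$ and using homotopy invariance together with excision of the degree, the degrees on the two slices must coincide; but each is computed from the isolated trivial zero alone and, by the index jump, they differ. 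This contradiction forces alternative (I) or (II).

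The hard part will be the index-jump step together with the degree-separation argument: one must verify that the abstract transversality condition (iii) indeed translates into a genuine sign change of the Leray--Schauder index (equivalently, an odd crossing number), and then construct the isolating neighbourhood ${\mathcal O}$ carefully enough that homotopy invariance applies on each slice. By comparison the local Lyapunov--Schmidt reduction is routine.
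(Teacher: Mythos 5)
The paper does not actually prove Lemma~\ref{gbl}: it is quoted as a known result, with the references \cite{kiel, rabi} standing in for the proof. Your outline is precisely the argument found in those sources --- the local curve via Lyapunov--Schmidt reduction and the Crandall--Rabinowitz simple-eigenvalue theorem, then the global dichotomy via a Leray--Schauder index jump, Whyburn's separation lemma, and homotopy invariance of the degree on $\lambda$-slices --- so as far as the paper is concerned there is nothing to compare against, and your plan is the standard one.

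One point to tighten in the step you yourself flagged as hard. Hypothesis (ii) gives only \emph{geometric} simplicity of the zero eigenvalue of $A(\lambda^\ast)=\partial_f{\mathbb F}(\lambda^\ast,0)$, and the transversality condition (iii) does not by itself produce a single $C^1$ eigenvalue curve $\mu(\lambda)$ crossing the threshold with nonzero speed --- that would require \emph{algebraic} simplicity, which is not assumed. (In a $2\times 2$ model with a Jordan block, (iii) can hold while the perturbed eigenvalues behave like $\pm\sqrt{c(\lambda-\lambda^\ast)}$, complex on one side of $\lambda^\ast$, so no individual eigenvalue crosses smoothly.) The correct route, and the one in Kielh\"ofer, is through the reduced determinant: perform a Schur-complement (Lyapunov--Schmidt) reduction of the linearization $A(\lambda)$, obtaining a scalar function $d(\lambda)$ with $d(\lambda^\ast)=0$ and, by (iii), $d'(\lambda^\ast)\neq 0$; the sign change of $d$ --- equivalently of the determinant of $A(\lambda)$ on the generalized eigenspace --- is what yields the odd crossing number and hence the index jump $i(\lambda^\ast-\eta)=-\,i(\lambda^\ast+\eta)$, even when eigenvalues leave the real axis. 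With that substitution your argument goes through; the remainder (compactness of ${\mathcal S}^\ast$, the isolating neighbourhood, degree comparison on the slices $\lambda=\lambda^\ast\pm\eta$ and for $|\lambda|$ large) is the standard Rabinowitz construction.
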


We now prove the following existence result.

\begin{theorem}\label{exi1}
Let $G$ be a finite subgroup ${\mathbb G}$ of ${\mathbb O}(3)$ with the property that the subspace of 
${\mathbb G}$-invariant spherical harmonics of some specific degree $\ell \ge 3$ is one-dimensional and non-zonal.  
If $F: {\mathbb R}^2 \to {\mathbb R}$ is twice continuously differentiable and such that
$$
F_{f}(\lambda^*,0) = \ell (\ell + 1), \qquad F_{f \lambda}(\lambda^*,0) \neq 0,
$$
then there exists a maximal connected component of nontrivial solutions of \eqref{spdecc} such that all solutions  $(\lambda,\psi) \in {\mathcal S}^\ast$ close enough to $(\lambda^\ast,0)$ are non-zonal.
\end{theorem}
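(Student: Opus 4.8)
The plan is to apply the Rabinowitz global bifurcation theorem (Lemma~\ref{gbl}) to the operator $\mathbb{F}$ of \eqref{gp}, restricted to the closed subspace
$$
\mathbb{Y} = \{ f \in C^{0,\alpha}_0(\mathbb{S}^2) : \ G f = f \ \text{for all } G \in \mathbb{G}\}
$$
of $\mathbb{G}$-invariant functions. Because \eqref{spdecc}, and hence $\mathbb{F}$, is equivariant under the natural $\mathbb{O}(3)$-action, $\mathbb{F}$ maps $\mathbb{R}\times\mathbb{Y}$ into $\mathbb{Y}$, and --- the fixed-point subspace of a group action being invariant --- every solution found inside $\mathbb{Y}$ is a genuine solution of the full problem \eqref{spdecc}. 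First I would record the structural hypotheses of Lemma~\ref{gbl}: the map $f \mapsto f - \mathbb{F}(\lambda,f) = \Delta^{-1}\{F(\lambda,f) - \tfrac{1}{4\pi}\iint F(\lambda,f)\,{\rm d}\sigma\}$ is compact, since $\Delta^{-1}$ is compact from $C^{0,\alpha}_0$ into $C^{2,\alpha}\hookrightarrow C^{0,\alpha}_0$ and the superposition operator induced by the $C^2$ function $F$ is bounded and continuous; the same smoothing shows $\mathbb{F}\in C^2$. Hypothesis~(i), $\mathbb{F}(\lambda,0)=0$, is immediate because $F(\lambda,0)$ is spatially constant and is therefore cancelled by the mean-subtraction.

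The crux is hypothesis~(ii), the one-dimensionality of the kernel. Differentiating \eqref{gp} at the trivial branch gives
$$
\partial_f \mathbb{F}(\lambda^*,0) = \mathrm{Id} - F_f(\lambda^*,0)\,\Delta^{-1},
$$
an identity-minus-compact operator, hence Fredholm of index zero. It is diagonalised by the spherical-harmonic decomposition, acting on each eigenspace $\mathbb{E}_j$ as a scalar which degenerates exactly for the degree singled out by the eigenvalue relation $F_f(\lambda^*,0)=\ell(\ell+1)$, i.e. on $\mathbb{E}_\ell$. On the full space $C^{0,\alpha}_0$ the kernel would be the whole $(2\ell+1)$-dimensional space $\mathbb{E}_\ell$ and Lemma~\ref{gbl} would not apply; the whole point of passing to $\mathbb{Y}$ is that $\ker \partial_f\mathbb{F}(\lambda^*,0)\cap\mathbb{Y} = \mathbb{E}_\ell\cap\mathbb{Y}$ is, by the hypothesis on $\mathbb{G}$, exactly one-dimensional, spanned by the $\mathbb{G}$-invariant degree-$\ell$ harmonic $f^*$, which is moreover non-zonal.

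For the transversality condition~(iii) I would compute
$$
[\partial^2_{\lambda,f}\mathbb{F}(\lambda^*,0)](1,f^*) = -F_{f\lambda}(\lambda^*,0)\,\Delta^{-1} f^* = \frac{F_{f\lambda}(\lambda^*,0)}{\ell(\ell+1)}\, f^*,
$$
a nonzero multiple of $f^*$ by the hypothesis $F_{f\lambda}(\lambda^*,0)\neq 0$. Since $\partial_f\mathbb{F}(\lambda^*,0)$ is diagonal in the harmonic decomposition, its range inside $\mathbb{Y}$ consists precisely of the functions with vanishing degree-$\ell$ component (apply $\mathbb{P}_\ell$ and use that $\mathbb{P}_\ell$ commutes with $\partial_f\mathbb{F}(\lambda^*,0)$, which annihilates $\mathbb{E}_\ell$); as $\mathbb{P}_\ell f^* = f^*\neq 0$, the transversality vector is not in the range. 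Lemma~\ref{gbl} then produces the maximal connected component $\mathcal{S}^*$ through $(\lambda^*,0)$ together with the local branch $(\lambda(s), s\chi(s))$ with $\chi(0)=f^*$.

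Finally, non-zonality close to the bifurcation point follows from $\chi(0)=f^*$ being non-zonal. Letting $Q$ denote the $L^2$-orthogonal projection onto $\overline{\bigoplus_{j\ge 1,\,m\neq 0}\mathrm{span}\,Y_j^m}$ (the non-zonal harmonics), we have $Qf^*\neq 0$, so by continuity $Q\chi(s)\to Qf^*\neq 0$ as $s\to 0$; hence $Q(s\chi(s)) = s\,Q\chi(s)\neq 0$ for all small $s\neq 0$, i.e. every nontrivial solution on $\mathcal{S}^*$ near $(\lambda^*,0)$ is non-zonal. The main obstacle, and the decisive idea, is the second step: one must choose the symmetry class $\mathbb{G}$ so that the degree-$\ell$ eigenspace meets $\mathbb{Y}$ in a single (non-zonal) line --- this simultaneously reduces the otherwise $(2\ell+1)$-fold degenerate kernel to the one dimension that Lemma~\ref{gbl} requires and forces the bifurcating solutions off the zonal subspace. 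The remaining verifications (compactness and $C^2$-regularity of $\mathbb{F}$ on Hölder spaces, and the elliptic bootstrap placing the branch in $C^{2,\alpha}$) are routine.
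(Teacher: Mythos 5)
Your proposal is correct and follows essentially the same route as the paper's proof: restricting $\mathbb{F}$ from \eqref{gp} to the $\mathbb{G}$-invariant H\"older space $\mathbb{Y}$ so that the otherwise $(2\ell+1)$-dimensional kernel collapses to the line spanned by the non-zonal invariant harmonic $f^\ast$, checking Fredholmness of the identity-minus-compact linearization, deriving transversality from $F_{f\lambda}(\lambda^\ast,0)\neq 0$ via $[\partial^2_{\lambda,f}\mathbb{F}(\lambda^\ast,0)](1,f^\ast)=-F_{f\lambda}(\lambda^\ast,0)\,\Delta^{-1}f^\ast$, and reading off non-zonality near the bifurcation point from the tangent direction $f^\ast$ (which you make precise with the projection $Q$, a slight sharpening of the paper's remark). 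The only caveat is a sign: since $\Delta^{-1}$ acts as $-\tfrac{1}{\ell(\ell+1)}$ on $\mathbb{E}_\ell$, the kernel degenerates when $F_f(\lambda^\ast,0)=-\ell(\ell+1)$ (the value used in the paper's proof; the $+\ell(\ell+1)$ in the theorem statement is a typo), and your transversality computation is in fact consistent with this corrected sign even though you quoted the statement's convention for the kernel condition.
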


\begin{figure}[ht]
\begin{center}
\begin{minipage}{190mm}{
\resizebox*{7cm}{!}{\includegraphics{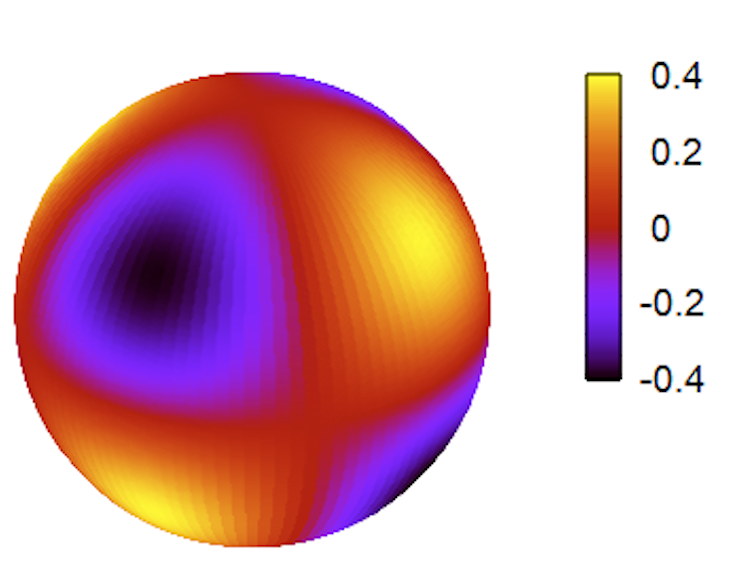}}}\hspace{1.7cm}{
\resizebox*{7cm}{!}{\includegraphics{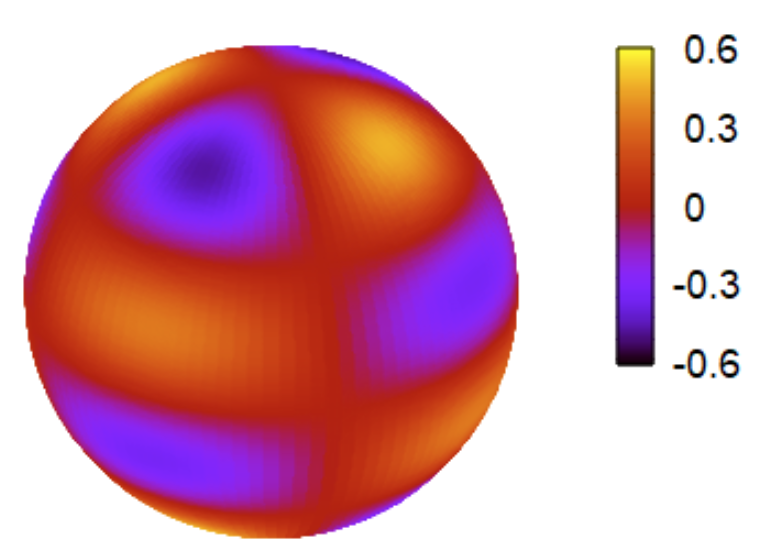}}}%
\caption{\footnotesize{Visualization of the non-axial symmetries of specific spherical harmonics, using different colours to keep track of the geometry of different level sets. Note that the subspace of spherical harmonics of degree 3 that are invariant under the finite 
tetrahedral subgroup (the symmetry group of the methane molecule) is generated by $Y_{3}^{-2}$, while 
the subspace of spherical harmonics of degree 5 that are invariant under the finite subgroup $D_4^d$ 
(the symmetry group of the bicapped square antiprism, describing the shape of the octasulfur molecule) is generated by $Y_5^{-2}$.}}
\label{f4}
\end{minipage}
\end{center}
\end{figure}

\begin{proof} Consider the map ${\mathbb F}: {\mathbb R} \times {\mathbb Y} \to {\mathbb Y}$ defined as in \eqref{gp} by
\begin{equation}\label{gpg}
{\mathbb F}(\lambda,f)= f - \Delta^{-1} \Big\{ F(\lambda,f) - \frac{1}{4\pi} \iint_{{\mathcal S}^2} F(\lambda,f)\,{\rm d}\sigma \Big\}\,
\end{equation}
where the Banach space
$${\mathbb Y}=\{f \in C^{0,\alpha}_0({\mathbb S}^2):\ Gf=f \quad\text{for all}\quad G \in {\mathbb G}\}$$
captures the symmetries associated with the group ${\mathbb G}$. The linearization of the operator ${\mathbb F}$ about the trivial solution $f=0$ of ${\mathbb F}(\lambda,f)=0$ is
\begin{equation}\label{linbif}
\partial_f {\mathbb F}(\lambda,0)[\psi]= \psi - \Delta^{-1} \Big\{ F_f(\lambda,0)\psi - \frac{1}{4\pi} \iint_{{\mathcal S}^2} F_f(\lambda,0)\psi\,{\rm d}\sigma \Big\} 
= \psi - F_f(\lambda,0) \, \Delta^{-1}\psi\,,\qquad \psi \in \mathbb{Y}\,.
\end{equation}
For $F_f(\lambda,0)=-\ell(\ell+1)$ this operator acting on $C^{0,\alpha}_0({\mathbb S}^2)$ has a nontrivial kernel given by the $(2\ell+1)$-dimensional space of spherical harmonics of order $\ell$. Consequently, if the finite subgroup ${\mathbb G}$ of ${\mathbb O}(3)$ has the property that the subspace of 
${\mathbb G}$-invariant spherical harmonics of degree $\ell$ is one-dimensional, then for $\lambda$ such that 
$F_f(\lambda,0)=-\ell(\ell+1)$, it follows that the kernel ${\mathcal N}(\partial_f {\mathbb F}(\lambda,0))$ of the operator ${\mathbb F}$ defined in \eqref{gpg} is one-dimensional, being generated by some $f^* \in \mathbb{E}_\ell$. Note that for a simple eigenvalue, the condition that 
$\partial_f {\mathbb F}(\lambda^\ast,0)$ is a Fredholm operator of index zero means that the range of this operator is closed and has a one-dimensional complement. 
If $\lambda^\ast$ is such that $F_f(\lambda^\ast,0)=-\ell(\ell+1)$, then, due to elliptic regularity and to the self-adjointness of the Laplace-Beltrami operator in $L^2(\mathbb{S}^2)$, we know that 
$\psi \in C^{0,\alpha}(\mathbb{S}^2)$ belongs to the range of $\partial_f {\mathbb F}(\lambda^\ast,0)$ if and only if it belongs to the 
orthogonal complement in $L^2(\mathbb{S}^2)$ of the spherical harmonics of degree $\ell$. From this it follows at once that $\partial_f {\mathbb F}(\lambda^\ast,0)$ acting on 
${\mathbb Y}$ has a closed range with a one-dimensional complement in ${\mathbb Y}$. Furthermore, since 
$$[\partial^2_{\lambda, f}{\mathbb F}(\lambda^\ast,0)]\,(1,f^\ast)= - \, F_{f \lambda}(\lambda^\ast,0)\,\Delta^{-1} f^\ast \,,$$
we see that the transversality condition is equivalent to $F_{f \lambda}(\lambda^\ast,0) \neq 0$. All the hypotheses in Lemma \ref{gbl} hold, and 
we deduce the existence of a curve $\{(\lambda(s),\,s\,\chi(s)):\ s \in {\mathbb R},\, |s| < \varepsilon\}$ of nontrivial solutions that bifurcates at $(\lambda^\ast,0)$ from the curve 
$\{(\lambda,0):\ \lambda \in {\mathbb R}\}$ of trivial solutions. Since the tangent vector of this nontrivial solution curve 
at the bifurcation point is given by $(\lambda'(0),f^\ast)$ and $f^\ast$ is non-zonal, near the bifurcation point all nontrivial 
solutions are non-zonal. 
\end{proof}

The next result describe settings in which it is possible to reveal structural properties of the continuum of solutions found in Theorem \ref{exi1}.

\begin{theorem}\label{bt2}
If $P: {\mathbb R} \to {\mathbb R}$ is twice continuously differentiable and 
\begin{itemize}
\item $\lim_{\lambda \to \infty} P(\lambda)=\infty$ and $\lim_{\lambda \to -\infty} P(\lambda)=-\infty$,
\item there exists $\lambda^\ast \in {\mathbb R}$ with $P'(\lambda^\ast)=- l(l+1)$ and $P''(\lambda^\ast) \neq 0$,
\item there exists $a>0$ with $P'(\lambda)>0$ for $|\lambda|>a$,
\end{itemize}
then the maximal connected component built up in Theorem~\ref{exi1} for $F(\lambda, f)=P(\lambda +f)-P(\lambda)$ has a closure ${\mathcal S}^\ast$ which is bounded in ${\mathbb R} \times {\mathbb Y}$, contains $(\lambda^\ast,0)$ as 
well as some $(\lambda,0)$ with $\lambda \neq \lambda^\ast$.
\end{theorem}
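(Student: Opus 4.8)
The plan is to derive both conclusions from a single a priori bound together with the global alternative of Lemma~\ref{gbl}. First I would check that Theorem~\ref{exi1} genuinely applies to $F(\lambda,f)=P(\lambda+f)-P(\lambda)$: here $F_f(\lambda,0)=P'(\lambda)$ and $F_{f\lambda}(\lambda,0)=P''(\lambda)$, so the hypotheses $F_f(\lambda^\ast,0)=P'(\lambda^\ast)=-\ell(\ell+1)$ (the value at which $\partial_f{\mathbb F}(\lambda^\ast,0)$ becomes singular on $\mathbb{E}_\ell\cap{\mathbb Y}$) and $F_{f\lambda}(\lambda^\ast,0)=P''(\lambda^\ast)\neq 0$ (transversality) are exactly those needed. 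This produces the maximal connected component ${\mathcal S}^\ast$ and the dichotomy: either (I) ${\mathcal S}^\ast$ is unbounded in ${\mathbb R}\times{\mathbb Y}$, or (II) ${\mathcal S}^\ast$ meets the trivial line at some $(\lambda,0)$ with $\lambda\neq\lambda^\ast$. The key observation is that \emph{both} assertions of the theorem follow at once from boundedness of ${\mathcal S}^\ast$: boundedness excludes (I) and thereby forces (II).

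The heart of the matter is thus a uniform a priori bound on nontrivial solutions $(\lambda,f)$ of \eqref{spdecc}. I would exploit the gradient-type structure of $F$. Since $f$ has zero mean, setting $u=\lambda+f$ gives $\overline{u}=\lambda$, and \eqref{spdecc} collapses to the autonomous form
$$
\Delta u = P(u)-\overline{P(u)}, \qquad \overline{P(u)}:=\frac{1}{4\pi}\iint_{{\mathbb S}^2}P(u)\,{\rm d}\sigma\,.
$$
By elliptic regularity $u$ is classical, so it attains its extrema; evaluating at a maximum and a minimum and using the maximum principle ($\Delta u\le 0$, resp. $\Delta u\ge 0$, for the negative Laplace--Beltrami operator) yields
$$
P(u_{\max})\le \overline{P(u)}\le P(u_{\min}),\qquad\text{hence}\qquad P(u_{\max})\le P(u_{\min})\,.
$$

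Now I would feed the three structural hypotheses on $P$ into this inequality. For a nonconstant solution one has $u_{\min}<u_{\max}$ (constant solutions are precisely the trivial line $f\equiv 0$ and are excluded). Strict monotonicity of $P$ on $(a,\infty)$ and on $(-\infty,-a)$ forces $u_{\min}\le a$ and $u_{\max}\ge -a$, since otherwise $u$ would take values in a region where $P$ is strictly increasing and $P(u_{\min})<P(u_{\max})$ would follow. Writing $M_-:=\sup_{\lambda\le a}P(\lambda)$ and $m_+:=\inf_{\lambda\ge -a}P(\lambda)$ --- both finite because $P\to-\infty$ at $-\infty$ and $P\to+\infty$ at $+\infty$ --- we obtain $P(u_{\max})\le P(u_{\min})\le M_-$ and $P(u_{\min})\ge P(u_{\max})\ge m_+$. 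Choosing $b>a$ with $P(b)>M_-$ and $b'<-a$ with $P(b')<m_+$ (possible by the limits at infinity and the monotonicity), the monotonicity of $P$ outside $[-a,a]$ turns these into $b'\le u_{\min}\le u_{\max}\le b$. Hence $\|u\|_{L^\infty}$, and therefore $|\lambda|=|\overline{u}|$ and $\|f\|_{L^\infty}=\|u-\lambda\|_{L^\infty}$, are bounded by a constant depending only on $P$.

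Finally I would upgrade this to the Banach-space norm and conclude. With $\|u\|_{L^\infty}$ under control, the right-hand side $P(u)-\overline{P(u)}$ is bounded in $L^\infty$, so the smoothing estimates for the compact inverse $\Delta^{-1}$ in \eqref{gp} bound $f$ in $C^{0,\alpha}({\mathbb S}^2)$, uniformly along the nontrivial solutions; these bounds pass to the closure ${\mathcal S}^\ast$. Thus ${\mathcal S}^\ast$ is bounded in ${\mathbb R}\times{\mathbb Y}$, which rules out alternative (I) and forces alternative (II), producing the second trivial point $(\lambda,0)\in{\mathcal S}^\ast$ with $\lambda\neq\lambda^\ast$, while boundedness is itself the first claim. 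The main obstacle is precisely the a priori estimate: the maximum-principle inequality $P(u_{\max})\le P(u_{\min})$ is by itself inconclusive, and only the global shape of $P$ (monotone with the correct orientation outside $[-a,a]$, together with the prescribed behaviour at $\pm\infty$) converts it into a genuine two-sided $L^\infty$ bound; some care is also needed to isolate the constant (trivial) solutions so that the strict-monotonicity step is legitimate.
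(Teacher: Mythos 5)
Your proposal is correct and follows essentially the same route as the paper: the weak maximum principle applied at the extrema of $u=\lambda+f$ gives $P(u_{\max})\le P(u_{\min})$, the monotone tails of $P$ together with its behaviour at $\pm\infty$ convert this into a uniform $L^\infty$ bound on $\lambda$ and $f$ (the paper phrases this as all three points $\lambda+\psi_m<\lambda<\lambda+\psi_M$ lying in a fixed interval $[a_-,a_+]$), elliptic regularity upgrades it to the H\"older norm of $\mathbb{Y}$, and boundedness then forces alternative (II) of the Rabinowitz dichotomy. The only cosmetic differences are your clean autonomous substitution $u=\lambda+f$ with $\bar u=\lambda$ and stopping the bootstrap at $C^{0,\alpha}$ (which suffices for $\mathbb{Y}$), where the paper continues via Schauder to $C^{2,\alpha}$.
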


\begin{proof}
We have to prove that all nontrivial solutions $(\lambda,\psi)$ of ${\mathbb F}(\lambda,\psi)=0$ with ${\mathbb F}$ defined by \eqref{gpg} are bounded {\it a priori} 
in ${\mathbb R} \times {\mathbb Y}$, or, equivalently, 
that all nontrivial solutions of \eqref{spdecc} are bounded {\it a priori} in ${\mathbb R} \times C^{2,\alpha}_0({\mathbb S}^2)$. Note that for 
$\psi \in C^{2,\alpha}({\mathbb S}^2)$, $\psi \not \equiv 0$, we have 
$$\psi_m =\inf_{\mathbb{S}^2} \{\psi\} < 0 < \sup_{\mathbb{S}^2} \{\psi\}= \Psi_M$$
since $\iint_{\mathbb{S}^2} \psi \,{\rm d}\sigma=0$. For a function $P$ with the properties specified in the statement one can easily see that 
there exist $a_+>0$ and $a_-<0$ such that 
\begin{itemize}
\item $P(a_+) >0 > P(a_-)$ and $P'(\lambda)>0$ for $\lambda \in (-\infty,a_-) \cup (a_+,\infty)$,
\item  $\sup_{ \lambda \in [a_-,a_+]} \{ |P(\lambda)|\} \le A= \max\{ -P(a_-),\,P(a_+)\}$.
\end{itemize}
If $(\lambda,\psi) \in {\mathbb R} \times {\mathbb Y}$ with $\psi \not\equiv 0$ lies on the continuum of solutions, the weak maximum principle applied to \eqref{spdecc} yields
$$F(\lambda,\psi_M)=P(\lambda + \psi_M)- P(\lambda) \le \frac{1}{4\pi} \iint_{{\mathcal S}^2} F(\lambda,\psi)\,{\rm d}\sigma \le P(\lambda + \psi_m)- P(\lambda)=F(\lambda,\psi_m)\,.$$
Thus $P(\lambda + \psi_m) \ge P(\lambda + \psi_M)$ and since $\lambda + \psi_m < \lambda < \lambda + \psi_M$  we deduce that all three points belong to the interval $[a_-,a_+]$. 
Consequently we have the following $L^\infty(\mathbb{S}^2)$ {\it a priori} bounds for any nontrivial solution $\psi \in C^{2,\alpha}_0({\mathbb S}^2)$ of equation \eqref{spdecc}:
\begin{equation}\label{linf}
|\lambda| + |\psi| \le 2(a_+-a_-) \quad\text{and}\quad |\Delta \psi| \le 2A\,.
\end{equation}
We now invoke for $k>2$ the $L^k$-estimates for elliptic equations on smooth compact manifolds without boundary 
(see \cite{bes}): there are constants $c_0>0$ and $c_2>0$ such that
$$\Vert \psi \Vert_{W^{2,k}(\mathbb{S}^2)} \le c_2 \Vert \Delta \psi \Vert_{L^k(\mathbb{S}^2)} + c_0 \Vert \psi \Vert_{L^1(\mathbb{S}^2)}$$
for every $\psi \in W^{2,k}(\mathbb{S}^2)$. In conjunction with \eqref{linf} and with the Sobolev embedding $W^{2,k}(\mathbb{S}^2) \subset 
C^1(\mathbb{S}^2)$, we obtain {\it a priori} bounds in $C^1(\mathbb{S}^2)$ for the nontrivial solutions $\psi \in C^{2,\alpha}_0({\mathbb S}^2)$ of \eqref{spdecc}. But then 
\eqref{spdecc} yields by differentiation {\it a priori} bounds for $\Vert \Delta \psi \Vert_{C^{0,\alpha}(\mathbb{S}^2)}$. We now conclude the boundedness 
of the nontrivial solutions of \eqref{spdecc}  in ${\mathbb R} \times C^{2,\alpha}_0({\mathbb S}^2)$  from the Schauder estimates (see \cite{bes}): there are constants $C_0>0$ and $C_2>0$ such that
$$\Vert \psi \Vert_{C^{2,\alpha}(\mathbb{S}^2)} \le C_2 \Vert \Delta \psi \Vert_{C^{0,\alpha}(\mathbb{S}^2)} + C_0 \Vert \psi \Vert_{C(\mathbb{S}^2)}$$
for every $\psi \in  C^{2,\alpha}(\mathbb{S}^2)$.\qed
\end{proof}

%

\begin{remark}\label{exir}
Simple examples of functions satisfying the hypotheses of Theorem \ref{exi1} are provided by the polynomials 
$$P(\lambda)=\mu_1 \lambda^3 - [\mu + l(l+1)]\lambda$$
with parameters $\mu>0$ and $\mu_1>0$.\qed
\end{remark}

\subsection{The case $\omega>0$}

We now establish the existence of a global continuum of solutions to \eqref{ellipticpsi} for $\omega >0$ and suitable 
twice continuously differentiable functions $F: {\mathbb R} \to {\mathbb R}$. To take advantage of the fact that the spherical 
harmonics provide a representation of the orthogonal group ${\mathbb O}(3)$, we write \eqref{ellipticpsi} in the form
\begin{equation}\label{eqz}
\Delta \psi(\xi)= F(\psi(\xi)) - 2 \omega z\,,\qquad \xi \in \mathbb{S}^2\,,
\end{equation}
where $z$ is the distance of $\xi \in \mathbb{S}^2$ to the equatorial plane. An adequate class of nonlinear functions $F$ is 
obtained by modifying outside a neighbourhood of zero the linear functions 
$\psi \mapsto -l(l+1)\psi$ with $l \in {\mathbb N}$, aiming at replacing the solution set
$$\{ \omega z + \sum_{m=-l}^l c_m Y_l^m(\varphi,\theta):\, c_m \in {\mathbb R}\ \text{for}\ -l \le m \le l\}$$
which features functions with gradients of all possible sizes by a continuum of nontrivial solutions with an 
{\it a priori} bound on the corresponding velocity fields and vorticities.

\begin{theorem}\label{exi2}
Let $G$ be a finite subgroup ${\mathbb G}$ of ${\mathbb O}(3)$ with the property that the subspace of 
${\mathbb G}$-invariant spherical harmonics of some specific degree $l \ge 3$ is one-dimensional and non-zonal, being generated 
by some $f^\ast \neq 0$. Given $\beta>0$, if $P: {\mathbb R} \to {\mathbb R}$ is twice continuously differentiable and 
\begin{itemize}
\item there exists $\mu > \frac{2\nu}{l(l+1)}$ with $P(\lambda)=-\frac{2\nu}{\mu}\,\lambda$ for $|\lambda| \le 2\mu$, where 
$\nu=\frac{\beta l(l+1)}{l(l+1)-2}>\beta$,
\item $\lim_{\lambda \to -\infty} P(\lambda)=-\infty$, $\lim_{\lambda \to \infty} P(\lambda)=\infty$ and there exists $a>2\mu$ with $P(\lambda) > 0$ for $|\lambda| >  a$,
\end{itemize}
then there exists a maximal connected component ${\mathcal S}_\beta$ of nontrivial solutions of the vorticity equation
\begin{equation}\label{eqzl}
\Delta \psi= P((1+\lambda^2)\psi) - 2 \Big( \nu - \frac{\mu}{1+\lambda^2}\Big)\, z - \frac{1}{4\pi} \,\iint_{{\mathbb S}^2} P((1+\lambda^2)\psi) \,{\rm d}\sigma\,,
\end{equation} 
such that the corresponding velocity fields are uniformly bounded {\it a priori}. Moreover, the continuum ${\mathcal S}_\beta$ 
comprises non-zonal Rossby solutions of the form 
$$\psi=c^\ast f^\ast - \frac{\mu}{1+\lambda^2}\,z \quad\textit{with}\quad c^\ast \neq 0$$ 
close to the zonal solution $\psi_0=-\frac{2\nu}{l(l+1)}\,z$ of the linear equation 
$$\Delta \psi - l (l+1)\psi + 2 \beta z=0\,.$$
\end{theorem}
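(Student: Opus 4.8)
The plan is to reduce \eqref{eqzl} to an abstract bifurcation equation of the form \eqref{gp} on the ${\mathbb G}$-invariant subspace and then run the Rabinowitz machinery exactly as in Theorem~\ref{exi1}, the genuinely new ingredient being the a priori control of the velocity. First I would identify the branch of trivial solutions: a direct computation shows that $\psi_\lambda = -\frac{\mu}{1+\lambda^2}\,z$ solves \eqref{eqzl} for every $\lambda \in {\mathbb R}$, since $(1+\lambda^2)\psi_\lambda = -\mu z \in [-2\mu,2\mu]$ gives $P((1+\lambda^2)\psi_\lambda)=-\frac{2\nu}{\mu}(-\mu z)=2\nu z$ (of vanishing mean), and both sides of \eqref{eqzl} collapse to $\frac{2\mu}{1+\lambda^2}\,z$. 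Substituting $\psi=\phi-\frac{\mu}{1+\lambda^2}z$ and using $\Delta z=-2z$ recasts \eqref{eqzl} as $\Delta\phi = {\mathcal G}(\lambda,\phi)-\frac{1}{4\pi}\iint_{{\mathbb S}^2}{\mathcal G}(\lambda,\phi)\,{\rm d}\sigma$ with ${\mathcal G}(\lambda,\phi)=P((1+\lambda^2)\phi-\mu z)-2\nu z$, that is, as ${\mathbb F}(\lambda,\phi)=0$ in the sense of \eqref{gp}, now with $\phi=0$ the trivial solution for all $\lambda$ (hypothesis (i) of Lemma~\ref{gbl}).

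Next I would verify the remaining hypotheses. Linearizing at $\phi=0$, and using that $-\mu z$ stays in the interior of $[-2\mu,2\mu]$ so that $P'(-\mu z)\equiv-\frac{2\nu}{\mu}$, yields $\partial_\phi{\mathbb F}(\lambda,0)[\chi]=\chi+\frac{2\nu}{\mu}(1+\lambda^2)\Delta^{-1}\chi$. Its kernel is nontrivial exactly when $\frac{2\nu}{\mu}(1+\lambda^2)$ equals an eigenvalue $j(j+1)$ of $-\Delta$; taking $j=l$ fixes $(\lambda^\ast)^2=\frac{\mu\,l(l+1)}{2\nu}-1$, which is positive precisely because $\mu>\frac{2\nu}{l(l+1)}$, so $\lambda^\ast\neq0$. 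On the ${\mathbb G}$-invariant space ${\mathbb Y}$ this kernel is one-dimensional, spanned by $f^\ast$, and $\partial_\phi{\mathbb F}(\lambda^\ast,0)=I-l(l+1)\Delta^{-1}$ is a compact perturbation of the identity, hence Fredholm of index zero with range the $L^2$-orthogonal complement of $f^\ast$ in ${\mathbb Y}$, just as in Theorem~\ref{exi1}. Since $\partial^2_{\lambda,\phi}{\mathbb F}(\lambda^\ast,0)(1,f^\ast)=\frac{4\nu\lambda^\ast}{\mu}\Delta^{-1}f^\ast=-\frac{4\nu\lambda^\ast}{\mu\,l(l+1)}f^\ast$ is a nonzero multiple of $f^\ast$ (here $\lambda^\ast\neq0$ is essential), it is not in that range and the transversality condition holds. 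Lemma~\ref{gbl} then produces the maximal connected component ${\mathcal S}_\beta$; the bifurcating branch being $(\lambda(s),s\chi(s))$ with $\chi(0)=f^\ast$, undoing the substitution gives $\psi=c^\ast f^\ast-\frac{\mu}{1+\lambda^2}z+{\rm O}((c^\ast)^2)$ with $c^\ast=s$, a non-zonal perturbation of the trivial solution $\psi_0=-\frac{2\nu}{l(l+1)}z$ at $\lambda=\lambda^\ast$ because $f^\ast$ is non-zonal.

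The hard part is the uniform a priori bound on the velocity, and this is where the scaling $(1+\lambda^2)\psi$ and the shape of $P$ are exploited. Writing $w=(1+\lambda^2)\psi$ (of vanishing mean, so $w_m:=\min w<0<\max w=:w_M$) and evaluating \eqref{eqzl} at the extrema of $\psi$ by the maximum principle on the closed manifold ${\mathbb S}^2$, the nonlocal constant and the rotation term cancel in the difference, leaving $P(w_M)-P(w_m)\le 2\big|\nu-\tfrac{\mu}{1+\lambda^2}\big|\,|z(x_M)-z(x_m)|\le M$, with $M$ depending only on $\nu,\mu$ and not on $\lambda$ or the solution. Because $P$ is bounded above on $(-\infty,0]$ and tends to $+\infty$ at $+\infty$ while being eventually increasing, $P(w_M)\le M+\sup_{s\le0}P(s)$ forces $w_M\le C$; symmetrically $P$ bounded below on $[0,\infty)$ and $P\to-\infty$ at $-\infty$ force $w_m\ge-C$, for a uniform $C$. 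Hence $\|w\|_{L^\infty}\le C$, which gives $\|\psi\|_{L^\infty}\le C$ (as $1+\lambda^2\ge1$) and $\|P(w)\|_{L^\infty}\le\sup_{|s|\le C}|P(s)|$, so that $\|\Delta\psi\|_{L^\infty}$ is uniformly bounded by \eqref{eqzl}. The $L^k$-elliptic estimates and the embedding $W^{2,k}({\mathbb S}^2)\hookrightarrow C^1({\mathbb S}^2)$ with $k>2$ used in Theorem~\ref{bt2} then bound $\|\psi\|_{C^1}$, hence $|U|=|\operatorname{grad}\psi|$, uniformly along ${\mathcal S}_\beta$.

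The point demanding care is that this estimate must be uniform as $\lambda\to\infty$ along a component that need not be bounded in $\lambda$; it is exactly the cancellation of the $\lambda$-dependent terms in $P(w_M)-P(w_m)$, together with the coercivity of $P$ at infinity, that renders the bound independent of the location on the continuum, and this is the crux of the argument.
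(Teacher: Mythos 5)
Your proposal follows essentially the same route as the paper's proof: the same change of unknown $\psi=\phi-\frac{\mu}{1+\lambda^2}\,z$ reducing \eqref{eqzl} to the auxiliary equation \eqref{eqza}, the same linearization $\chi\mapsto\chi+\frac{2\nu}{\mu}(1+\lambda^2)\Delta^{-1}\chi$ with bifurcation points $\pm\lambda^\ast=\sqrt{\mu l(l+1)/(2\nu)-1}$ (positivity from $\mu>\frac{2\nu}{l(l+1)}$), the same transversality computation $\frac{4\nu\lambda^\ast}{\mu}\Delta^{-1}f^\ast$, Lemma~\ref{gbl} on the ${\mathbb G}$-invariant space ${\mathbb Y}$, and a weak-maximum-principle $L^\infty$ bound combined with $L^k$ elliptic estimates and $W^{2,k}({\mathbb S}^2)\hookrightarrow C^1({\mathbb S}^2)$ to control the velocity. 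Your handling of the maximum-principle step is in fact slightly more careful than the paper's write-up: you keep the rotation terms, evaluated at the two distinct extremal points, as a bounded difference $M$ uniform in $\lambda$, where the paper's displayed inequalities treat them as if they cancelled; and you correctly read the tail hypothesis as monotonicity of $P$ outside $[-a,a]$ (as in Theorem~\ref{bt2} -- taken literally, $P(\lambda)>0$ for $\lambda<-a$ would contradict $P\to-\infty$).

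The one point where your argument falls short of the statement is the ``Moreover'' clause: it asserts that near the bifurcation the continuum consists of solutions \emph{exactly} of the form $\psi=c^\ast f^\ast-\frac{\mu}{1+\lambda^2}\,z$, whereas you obtain this only modulo ${\rm O}((c^\ast)^2)$. The missing observation -- which your own setup already contains -- is that $P$ is \emph{exactly} linear on $[-2\mu,2\mu]$: along the local branch one has $\|(1+\lambda^2)\phi-\mu z\|_{L^\infty({\mathbb S}^2)}\le 2\mu$, so \eqref{eqza} reduces exactly to the Helmholtz equation $\Delta\phi+\frac{2\nu}{\mu}(1+\lambda^2)\phi=0$; a nontrivial solution forces $\frac{2\nu}{\mu}(1+\lambda^2)=n(n+1)$, hence $\lambda=\lambda^\ast$ near the bifurcation point, and then $\phi\in{\mathbb E}_l\cap{\mathbb Y}={\rm span}\{f^\ast\}$. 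Thus the ${\rm O}((c^\ast)^2)$ corrector vanishes identically and $\lambda$ is pinned at $\lambda^\ast$ for all small solutions; this same dichotomy also yields the paper's complementary conclusion that the continuum must contain solutions with $\|(1+\lambda^2)\phi-\mu z\|_{L^\infty({\mathbb S}^2)}>2\mu$, where the nonlinear part of $P$ genuinely enters. With this one additional remark your proof matches the paper's in full.
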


\begin{proof}
Let us first note that if $f$ solves
\begin{equation}\label{eqza}
\Delta f - P\Big((1+\lambda^2)f - \mu z\Big) + 2 \nu z +  \frac{1}{4\pi} \,\iint_{{\mathbb S}^2} P\Big((1+\lambda^2)f - \mu z\Big) \,{\rm d}\sigma=0\,,
\end{equation}
then 
\begin{equation}\label{eqzs}
\psi= f - \frac{\mu}{1+\lambda^2}\,z
\end{equation}
solves \eqref{eqzl}. We will therefore develop a global bifurcation approach to establish the existence of 
nontrivial solutions of \eqref{eqza}. Using the Banach space ${\mathbb Y}$ introduced in 
the proof of Theorem \ref{exi1}, we transform \eqref{eqza} to ${\mathbb F}(\lambda,f)=0$, where
$${\mathbb F}: {\mathbb R} \times {\mathbb Y} \to {\mathbb Y}\,,\qquad 
{\mathbb F}(\lambda,f)= f - \Delta^{-1} \Big\{ P\Big((1+\lambda^2)f - \mu z\Big) - 2 \nu z\Big\}\,.$$
Note that ${\mathbb F}(\lambda,0)=0$. Since $|z| \le 1$ we have
$$\partial_f {\mathbb F}(\lambda,0)[f_0]=f_0 - (1+\lambda^2)\Delta^{-1} \Big\{ P'( - \mu z) f_0 - \frac{1}{4\pi} \,\iint_{{\mathbb S}^2} P'( - \mu z) f_0\,{\rm d}\sigma \Big\}
=f_0 + \frac{2\nu(1+\lambda^2)}{\mu}\,\Delta^{-1}f_0\,,$$
so that a necessary condition for the kernel of $\partial_f {\mathbb F}(\lambda,0)$ to comprises more than $\{0\}$ is 
$$\frac{2\nu(1+\lambda^2)}{\mu}= n(n+1)$$
for some $n \in {\mathbb N}$. The case $n=l$ corresponds to 
\begin{equation}\label{lbp}
\pm \lambda^\ast = \sqrt{\frac{\mu l (l+1)}{2\nu}-1}
\end{equation}
and in this case the kernel is one-dimensional, being generated by $f^\ast$. Since
$$[\partial^2_{\lambda f}{\mathbb F}(\lambda^\ast),0] (1,f^\ast)=\frac{4\lambda^\ast \nu}{\mu} \Delta^{-1}f^\ast 
\quad\text{with}\quad \frac{4\lambda^\ast \nu}{\mu} \neq 0\,,$$
a reasoning analogous to that in the proof of Theorem \ref{exi1} ensures the existence of a global continuum of 
nontrivial solutions that bifurcate at $(\lambda^\ast,0)$ from the curve of trivial solutions. Performing the transformation 
\eqref{eqzs} we obtain a corresponding continuum of non-trivial solutions to \eqref{eqzl}. Close to the bifurcation point, as long 
as $|(1+\lambda^2)f - \mu z| \le 2 \mu$ throughout $\mathbb{S}^2$, the equation \eqref{eqza} takes the form
\begin{equation}\label{lin0}
\Delta f + \frac{2\nu}{\mu}\,(1+\lambda^2)f =0\,.
\end{equation}
Equation \eqref{lin0} has a nontrivial solution only if $\frac{2\nu}{\mu}\,(1+\lambda^2)=n(n+1)$ for some $n \in {\mathbb N}$. But 
for $\lambda=\lambda^\ast$, equation \eqref{lin0} takes the form $\Delta f + l(l+1)f =0$, due to \eqref{lbp}. 
Consequently, for as long as $|(1+\lambda^2)f - \mu z| \le 2 \mu$ throughout $\mathbb{S}^2$, we must have 
$\lambda= \lambda^\ast$ for the corresponding non-trivial solutions in 
the continuum and the non-trivial function $\psi$ is a spherical harmonic of degree $l$ and thus a multiple of $f^\ast$, as claimed. 
Moreover, taking into account the two alternatives for the continuum of non-trivial bifurcating solutions, we infer the existence 
of non-trivial solutions in this continuum satisfying $\Vert (1+\lambda^2)f - \mu z\Vert_{L^\infty(\mathbb{S}^2)} > 2 \mu$, solutions for which the 
nonlinear adjustment of $P$ comes into play.

It remains to prove that the gradients of solutions to \eqref{eqzl} within the above continuum are 
uniformly bounded {\it a priori}. As in the proof of Theorem \ref{exi1}, this follows at once 
from elliptic $L^k$ estimates with $k>2$ in conjunction with the Sobolev embedding $W^{2,k}(\mathbb{S}^2) \subset C^1(\mathbb{S}^2)$, 
once we establish an $L^\infty(\mathbb{S}^2)$ {\it a priori} bound for the non-trivial solutions $\psi \in C^{2,\alpha}_0({\mathbb S}^2)$ of \eqref{eqzl}. 
For this, choose $b>a$ such that $P(-b)<0<P(b)$ and 
$$\min\{ P(b),\,|P(-b)|\} \ge \max_{|\lambda| \le a} \{ |P(\lambda)|\}\,.$$
Let us now note that if $(\lambda,\psi) \in {\mathbb R} \times C^{2,\alpha}_0({\mathbb S}^2)$ with $\psi \not \equiv 0$ solves \eqref{eqzl}, then 
$$\psi_m =\inf_{\mathbb{S}^2} \{\psi\} < 0 < \sup_{\mathbb{S}^2} \{\psi\}= \Psi_M$$
since $\iint_{\mathbb{S}^2} \psi \,{\rm d}\sigma=0$. From the weak maximum principle in conjunction with \eqref{eqza} we obtain
$$P((1+\lambda^2)\psi_M - \mu z)- 2\nu z \le  \frac{1}{4\pi} \,\iint_{{\mathbb S}^2} P\Big((1+\lambda^2)f - \mu z\Big) \,{\rm d}\sigma \le P((1+\lambda^2)\psi_m - \mu z) - 2\nu z\,,$$
so that
$$P((1+\lambda^2)\psi_M - \mu z) \le P((1+\lambda^2)\psi_m - \mu z)\,.$$
Since $\psi_m < \psi_M$, we must have 
$$-b  \le (1+\lambda^2)\psi_m - \mu z \le (1+\lambda^2)\psi_M - \mu z \le b\,.$$
But then $|z| \le 1$ yields
$$-(b+\mu) \le (1+\lambda^2)\psi_m  \le (1+\lambda^2)\psi_M \le b+\mu\,,$$
and therefore $\Vert \psi\Vert_{L^\infty(\mathbb{S}^2)} \le \mu +b$. This completes the proof.\qed
\end{proof}

\section{Relevance for stratospheric flows}

In this section we show that some of the inviscid flows on a rotating sphere studied hitherto are building blocks for the leading-order dynamics of 3D stratospheric flows.

In the stratosphere the atmospheric flow is practically inviscid (see \cite{cat}), being thus governed 
by the components of the Euler equation (see \cite{gill})
\begin{subequations}\label{eq:euler}
\begin{align}
&\frac{\mathrm D u'}{\mathrm Dt'} + \frac{u'w'-u'v'\tan\theta}{r'} - 2 \varOmega'(v'\sin\theta -w'\cos\theta)
  = -\frac{1}{\rho'}\frac{1}{r'\cos\theta}\frac{\partial p'}{\partial \varphi}\,,\\
&\frac{\mathrm D v'}{\mathrm Dt'} + \frac{v'w'+u'^2\tan\theta}{r'} +2 \varOmega'u'\sin\theta + \varOmega'^2 r'\sin\theta\cos\theta
  = -\frac{1}{\rho'}\frac{1}{r'}\frac{\partial p'}{\partial \theta}\,, \\
&\frac{\mathrm D w'}{\mathrm Dt'} - \frac{u'^2+v'^2}{r'} - 2 \varOmega' u'\cos\theta - \varOmega'^2r'\cos^2\theta
  = -\frac{1}{\rho'}\frac{\partial p'}{\partial r'} -g'\,,
\end{align}
\end{subequations}
where the material derivative $\mathrm D/\mathrm Dt'$ in spherical coordinates is given by 
$$ \frac{\mathrm D\phantom{|}}{\mathrm Dt'} = \frac{\partial}{\partial t'}+ \frac{u'}{r'\cos\theta}\frac{\partial}{\partial \varphi} + \frac{v'}{r'}\frac{\partial}{\partial \theta} + w'\frac{\partial}{\partial r'}\,.$$
Here $p'$ and $\rho'$ are the pressure and density in the atmosphere, $\varOmega'$ is the constant 
rate of rotation of the planet, and $g'$ is the acceleration due to gravity, taken to be a constant. (We use primes to denote physical/dimensional variables; they will be removed 
when we nondimensionalize.) The conservation of mass in spherical coordinates takes the form
\begin{gather}\label{eq:mass-conv}
\frac{\mathrm D\rho'}{\mathrm Dt'} + \rho'\left(\frac{1}{r'\cos\theta}\,\frac{\partial u'}{\partial \varphi} + \frac{1}{r'\cos\theta}\,\frac{\partial}{\partial \theta}(v'\cos\theta) 
+ \frac{1}{r'^2}\,\frac{\partial}{\partial r'}\,(r'^2 w')\right)=0\,,
\end{gather}
while the equation of state for an ideal gas reads
\begin{equation}\label{ig}
p'=\rho' {\frak R}'T'\,,
\end{equation}
where $T'$ is the (absolute) temperature and ${\frak R}'$ is the gas constant. The first law of thermodynamics should also hold:
\begin{equation}\label{fl}
c_p'   \, \frac{{\mathrm D}T'}{\text{D}t'} -\kappa' \nabla'^2  T' - \frac{1}{\rho'}\, \frac{{\mathrm D}p'}{{\mathrm D}t'}=Q'\,,
\end{equation}
where 
$$\nabla'^2 \equiv \frac{\partial^2}{\partial r'^2} + \frac{2}{r'}\,\frac{\partial}{\partial r'}+ 
\frac{1}{r'^2}\Big( \frac{1}{\cos^2\theta}\,\frac{\partial^2}{\partial \varphi^2} 
+ \frac{\partial^2}{\partial \theta^2} -\tan\theta \frac{\partial}{\partial \theta}\Big)\,,$$
$c_p'$ is the specific heat and $\kappa'/c_p'$ is the thermal diffusivity, while $Q'$ is a general heat-source term.
We will mainly work with the pressure and the density, so that in our setting the role of the ideal gas law \eqref{ig} is to specify 
the temperature, while the second law of thermodynamics identifies the associated heat sources. 

We now introduce the following dimensional scales:
 \begin{align}\label{eq:non-dimen}
 \begin{aligned}
  R' &: \text{ radius of the planet (as a distance scale)} \\
  H' &: \text{ mean width of the stratosphere} \\
  U' &: \text{ horizontal velocity scale} \\
  W' &: \text{ vertical velocity scale} \\
  \overline{\rho}' &: \text{ average density of the stratosphere}\,.
  \end{aligned}
 \end{align}
The inverse Rossby number is defined as
 \begin{align}\label{eq:def-omega}
  \omega=\frac{\varOmega'R'}{U'}\,,
 \end{align}
and two further important flow-parameters are the shallowness parameter 
$\mu$ and the ratio $\delta$ between the vertical 
and horizontal velocity scales, given by
\begin{equation}\label{eps}
\mu=\frac{H'}{R'}
\quad\text{and}\quad
\delta = \frac{W'}{U'}\,.
\end{equation}
The relevant data is suggested by the characteristics of persistent large-scale flow patterns in the stratosphere (see \cite{cat, dow, lun}):\bigskip

\begin{tabular}{ |p{1.25cm}||p{1.6cm}|p{1.2cm}|p{1.7cm}|p{2.75cm}|p{1.4cm}|p{1.7cm}|p{0.5cm}|p{1.4cm} |p{1.5cm}| }
 \hline
 Planet & $R'$ & $H'$ & $g'$ & $\varOmega'$ & $U'$ & $W'$ & $\omega$ & $\mu$ & $\delta$ \\
 \hline\hline
 Earth   & 6371 km    & 40 km & 9.8 m/s$^2$ & $7.27\times 10^{-5}\,\text{rad/s}$ & 50 m/s & $10^{-3}$ m/s & 9 & $6 \times 10^{-3}$& $2 \times 10^{-5}$\\
Jupiter &  69911 km   & 270 km & 24.8 m/s$^2$ & $1.76\times 10^{-4}\,\text{rad/s}$ & 150 m/s  & $10^{-2}$ m/s & 82 &$ 4 \times 10^{-3}$ & $6 \times 10^{-5}$ \\
Saturn &  58232 km   & 200 km & 10.4 m/s$^2$ & $1.62\times 10^{-4}\,\text{rad/s}$ & 150 m/s  & $10^{-2}$ m/s & 63 &$ 3 \times 10^{-3}$ & $6 \times 10^{-5}$ \\
Neptune &  24622 km   & 200 km & 11.1 m/s$^2$ & $1.08\times 10^{-4}\,\text{rad/s}$ & 200 m/s  & $10^{-3}$ m/s & 13 &$ 8 \times 10^{-3}$ & $5 \times 10^{-6}$ \\
Uranus &  25362 km   & 150 km & 8.8 m/s$^2$ & $1.04\times 10^{-4}\,\text{rad/s}$ & 150 m/s  & $10^{-5}$ m/s & 18 &$ 6 \times 10^{-3}$ & $6 \times 10^{-8}$ \\
 \hline
\end{tabular}

\bigskip

Since typically $\delta \le 10^{-4}$, the time scale $R'/U'$ is determined by the horizontal flow (the values for Earth, Jupiter, Saturn, Neptune and Uranus being about 
1.5, 5.5, 4.5, 1.4 and 2 days, respectively).  We now define the dimensionless variables $t$, $z$, $u$, $v$, $w$, $\rho$, and $p$ by
\begin{equation}\label{nd}
t' =\displaystyle\frac{R'}{U'}\,t\,,\quad r' =R'+H'z\,,\quad(u',v') =U'(u,v)\,,\quad 
 w' =W'w\,,\quad \rho' = \overline{\rho'}\rho\,, \quad p'=\overline{\rho'}U'^2 p\,,
\end{equation}
and obtain from \eqref{eq:euler}-\eqref{eq:mass-conv} the components of the nondimensional Euler equation 
\begin{align}
&\displaystyle\frac{\mathrm D u}{\mathrm Dt} + \frac{\delta uw-uv\tan\theta}{1+\mu z} - 2 \omega (v\sin\theta - \delta w\cos\theta)
= -\frac{1}{\rho}\frac{1}{(1+\mu z)\cos\theta}\, \frac{\partial p}{\partial \varphi}   \label{elongnd} \\
&\displaystyle\frac{\mathrm D v}{\mathrm Dt} + \frac{\delta vw+u^2\tan\theta}{1+\mu z} +2 \omega u \sin\theta + 
\omega^2 (1+\mu z) \sin\theta \cos\theta
  = -\frac{1}{\rho}\frac{1}{1+\mu z} \, \frac{\partial p}{\partial \theta}  \label{elatnd} \\
&\displaystyle \mu\delta\, \frac{\mathrm D w}{\mathrm Dt} - \mu\,\frac{u^2+v^2}{1+\mu z} - 2 \mu\omega u \cos\theta - 
\mu\omega^2 (1+\mu z)^2 \cos^2\theta
 = -\frac{1}{\rho}\frac{\partial p}{\partial z} -g \,,\label{ervertnd}
\end{align}
and the nondimensional equation of mass conservation
\begin{equation}\label{mcnd}
\frac{\mathrm D\rho}{\mathrm Dt} + \rho\Big\{\frac{1}{(1+\mu z)\cos\theta}\,\frac{\partial u}{\partial \varphi} + \frac{1}{(1+\mu z)\cos\theta}\,\frac{\partial}{\partial \theta}(v\cos\theta)  + \frac{\delta}{\mu}\frac{1}{(1+\mu z)^2}\,\frac{\partial}{\partial z}\,\Big((1+\mu z)^2 w\Big)\Big\}=0\,,\end{equation}
where 
$$\frac{\mathrm D\phantom{|}}{\mathrm Dt} = \frac{\partial}{\partial t}+ \frac{u}{(1+\mu z) \cos\theta}\frac{\partial}{\partial \varphi} + 
\frac{v}{1+\mu z}\frac{\partial}{\partial \theta} + \frac{\delta}{\mu}\,w\frac{\partial}{\partial z} \quad\text{and}\quad 
g=\frac{g' H'}{U'^2}\,,$$
with $g \approx 157$ for Earth, $g \approx 297$ for Jupiter, $g \approx 92$ for Saturn, $g \approx 58$ for Uranus, and $g \approx 55$ for Neptune. 

We are interested in the leading-order dynamics as $\mu \to 0$, 
the physically relevant regime for the thin-shell stratosphere being characterised by
\begin{equation}\label{ts}
\delta \ll \mu \ll 1\,,
\end{equation}
so that the flow dynamics is governed at leading-order by the non-dimensional equations \eqref{elongnd}-\eqref{mcnd} in the limit $\mu \to 0$:
\begin{align}
&\displaystyle\frac{\partial u_0}{\partial t}+ \frac{u_0}{\cos\theta}\frac{\partial u_0}{\partial \varphi}+ v_0\frac{\partial u_0}{\partial \theta} 
- u_0v_0\tan\theta - 2\omega\, v_0\sin\theta
= -\frac{1}{\rho_0}\frac{1}{\cos\theta}\, \frac{\partial p_0}{\partial \varphi} \,,  \label{elongl} \\
&\displaystyle\frac{\partial v_0}{\partial t}+\frac{u_0}{\cos\theta}\frac{\partial v_0}{\partial \varphi}+ v_0\frac{\partial v_0}{\partial \theta} 
+ u_0^2\tan\theta + 2\omega\, u_0\sin\theta  + \omega^2  \sin\theta \cos\theta
= -\frac{1}{\rho_0} \, \frac{\partial p_0}{\partial \theta} \,,  \label{elatl} \\
&0 = \displaystyle\frac{1}{\rho_0}\frac{\partial p_0}{\partial z} +g \,,\label{ervertl}\\
&\displaystyle\frac{\partial \rho_0}{\partial t}+ \frac{u_0}{\cos\theta}\frac{\partial \rho_0}{\partial \varphi}+ v_0\frac{\partial \rho_0}{\partial \theta}  + 
\frac{\rho_0}{\cos\theta} \Big(\frac{\partial u_0}{\partial \varphi} + \frac{\partial}{\partial \theta}(v_0\cos\theta) \Big)=0\,.\label{mcl}
\end{align}
Throughout the stratosphere the main changes in density are in the vertical direction, with the density decreasing with height 
(e.g., from about 100 g/cm$^3$ at the bottom of the Earth's stratosphere to about 1 g/cm$^3$ at its top), so that we restrict our attention 
to the setting
\begin{equation}\label{dh}
\rho_0=\rho_0(z)\,.
\end{equation}
The flow dynamics is then governed at leading-order by the system
\begin{subequations}\label{eel}
\begin{align}
&\frac{\partial u_0}{\partial t}+ \frac{u_0}{\cos\theta}\frac{\partial u_0}{\partial \varphi}+ v_0\frac{\partial u_0}{\partial \theta} 
- u_0v_0\tan\theta - 2\omega\, v_0\sin\theta = -\frac{1}{\rho_0 \cos\theta}\frac{\partial p_0}{\partial \varphi}\,,\\ 
&\frac{\partial v_0}{\partial t}+\frac{u_0}{\cos\theta}\frac{\partial v_0}{\partial \varphi}+ v_0\frac{\partial v_0}{\partial \theta} 
+ u_0^2\tan\theta + 2\omega\, u_0\sin\theta + \omega^2  \sin\theta \cos\theta = - \frac{1}{\rho_0}\frac{\partial p_0}{\partial \theta}\,,  \\
&0 = \frac{1}{\rho_0}\frac{\partial p_0}{\partial z} +g \,,\\
& \frac{\partial u_0}{\partial \varphi} + \frac{\partial}{\partial \theta}(v_0\cos\theta) =0\,.
\end{align}
\end{subequations}
For $\rho_0$ constant the system \eqref{eel} particularizes to that describing inviscid flow on the 
surface of a rotating sphere. This feature 
is related to the fact that, due to an ascending temperature with height, the stratosphere is stably stratified and vertical motion is suppressed.  
 
Equation (\ref{eel}c) yields the existence of a stream function, $\psi(\varphi,\theta,z,t)$, satisfying
\begin{equation}\label{strg}
  u_0=-\frac{\partial \psi}{\partial \theta}
  \quad\text{and}\quad
  v_0=\frac{1}{\cos\theta}\frac{\partial \psi}{\partial \varphi}\,,
\end{equation}
while the elimination of the dynamic pressure $P_0$ between the equations (\ref{eel}a)-(\ref{eel}b) gives the vorticity equation
\begin{equation}\label{vorteq}
\frac{\partial}{\partial t}\,\Delta\psi + \frac{1}{\cos\theta}\, \left[\frac{\partial\psi}{\partial\varphi}\,\frac{\partial}{\partial\theta}
  - \frac{\partial\psi}{\partial\theta}\,\frac{\partial}{\partial\varphi}\right]\left(\nabla^2_\Sigma\psi + 2\omega\sin\theta\right) = 0\,,
\end{equation}
in which $\Delta= \frac{\partial^2}{\partial\theta^2} - \tan\theta\frac{\partial}{\partial\theta} + \frac{1}{\cos^2\theta}\frac{\partial^2}{\partial\varphi^2}$ is the Laplace-Beltrami operator on 
the surface of the unit sphere ${\mathbb S}^2$ and $\Delta \psi$ is the vorticity of the flow. 

\begin{lemma}\label{lem}
If $\psi_0(\varphi,\theta)$ solves
\begin{equation}\label{li}
\Delta \psi_0 = F(\psi_0) -\frac{1}{4\pi}\, \iint_{{\mathbb S}^2} F(\psi_0)\,{\rm d}\sigma
\end{equation}
for some $F \in C^1({\mathbb R},{\mathbb R})$, then 
\begin{equation}\label{lirot}
\psi(\varphi,\theta,z,t)=\omega \sin\theta + \psi_0(\varphi +\omega\,t, \theta)
\end{equation}
is a solution of the vorticity equation \eqref{vorteq}.
\end{lemma}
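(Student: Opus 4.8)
The plan is to verify \eqref{vorteq} by direct substitution, after two preliminary observations. First, note that \eqref{vorteq} is identical to \eqref{Eomega}, and that the height variable $z$ enters only as a passive parameter: the $\psi$ in \eqref{lirot} carries no $z$-dependence, so every spatial derivative in \eqref{vorteq} acts in $(\varphi,\theta)$ alone and $z$ may be ignored throughout. Second, \eqref{li} is precisely the condition that renders $\psi_0$ a \emph{stationary} solution of the fixed-frame equation $(\mathcal{E}_0)$: writing $c=\tfrac{1}{4\pi}\iint_{\mathbb{S}^2}F(\psi_0)\,{\rm d}\sigma$, relation \eqref{li} reads $\Delta\psi_0=F(\psi_0)-c$, so $\Delta\psi_0$ is a function of $\psi_0$ and $\operatorname{grad}\Delta\psi_0$ is everywhere parallel to $\operatorname{grad}\psi_0$; this is exactly \eqref{ellipticpsi0} with $\omega=0$.

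Granting these, I would proceed as follows. Abbreviate $\Phi=\varphi+\omega t$, so that $\psi=\omega\sin\theta+\psi_0(\Phi,\theta)$ and, by the travelling-wave structure, $\partial_t$ acting on the $\psi_0$-part coincides with $\omega\,\partial_\varphi$. Using the eigenfunction relation $\Delta\sin\theta=-2\sin\theta$ (the $Y_1^0$ identity), one finds $\Delta\psi=-2\omega\sin\theta+\Delta\psi_0(\Phi,\theta)$, whence the total vorticity collapses to
\begin{equation*}
\Delta\psi+2\omega\sin\theta=\Delta\psi_0(\Phi,\theta)=F(\psi_0)-c .
\end{equation*}
Consequently $\partial_t\Delta\psi=\omega\,\partial_\varphi\Delta\psi_0$. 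For the advection bracket I would substitute $\partial_\theta\psi=\omega\cos\theta+\partial_\theta\psi_0$ and $\partial_\varphi\psi=\partial_\varphi\psi_0$, which yields
\begin{equation*}
\frac{1}{\cos\theta}\big[\partial_\varphi\psi\,\partial_\theta-\partial_\theta\psi\,\partial_\varphi\big]\Delta\psi_0
=\frac{1}{\cos\theta}\big[\partial_\varphi\psi_0\,\partial_\theta\Delta\psi_0-\partial_\theta\psi_0\,\partial_\varphi\Delta\psi_0\big]-\omega\,\partial_\varphi\Delta\psi_0 .
\end{equation*}

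The crux is that the remaining Jacobian of $\psi_0$ against $\Delta\psi_0$ vanishes identically: differentiating $\Delta\psi_0=F(\psi_0)-c$ gives $\partial_\theta\Delta\psi_0=F'(\psi_0)\,\partial_\theta\psi_0$ and $\partial_\varphi\Delta\psi_0=F'(\psi_0)\,\partial_\varphi\psi_0$, so the bracket cancels — this is the parallel-gradient observation above. What survives is $\partial_t\Delta\psi+(\text{advection})=\omega\,\partial_\varphi\Delta\psi_0-\omega\,\partial_\varphi\Delta\psi_0=0$, which is \eqref{vorteq}. I expect the only point requiring care to be this exact cancellation between the genuinely time-dependent term $\partial_t\Delta\psi$ and the stray contribution $-\omega\,\partial_\varphi\Delta\psi_0$ generated by differentiating the planetary term $\omega\sin\theta$ in $\theta$; everything else is routine bookkeeping. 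As a calculation-free alternative, the two preliminary observations identify $\psi_0$ as a time-independent solution of $(\mathcal{E}_0)$, and the change-of-frame symmetry \eqref{changeofframe} then immediately shows that $\psi_0(\varphi+\omega t,\theta)+\omega\sin\theta$ solves \eqref{Eomega}, which is \eqref{vorteq}.
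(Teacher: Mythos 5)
Your proof is correct and is essentially the paper's own argument: the paper likewise uses $\Delta\sin\theta=-2\sin\theta$ to reduce the total vorticity to $F(\psi_0)-c$, kills the Jacobian of $\psi_0$ against $\Delta\psi_0$ by the parallel-gradient (chain rule) identity, and cancels the surviving $-\omega\,\partial_\varphi$ contribution from the planetary term against $\partial_t\Delta\psi=\omega\,\partial_\varphi\Delta\psi_0$. Your closing calculation-free alternative via the change-of-frame symmetry \eqref{changeofframe} is also sound and consistent with how the paper derives the travelling waves \eqref{eotw}.
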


\begin{proof}
Since $\Delta \sin\theta = - 2 \sin\theta$, we have
$$\Delta(\psi_0+\omega \sin\theta) =F(\psi_0)- 2\omega \sin\theta -\frac{1}{4\pi}\, \iint_{{\mathbb S}^2} F(\psi_0)\,{\rm d}\sigma\,,$$
so that 
$$\Sigma\Delta+2\omega \sin\theta =F(\widehat{\psi})  -\frac{1}{4\pi}\, \iint_{{\mathbb S}^2} F(\widehat{\psi})\,{\rm d}\sigma\,,$$
for 
$$\Psi(\varphi,\theta,t)=\omega \sin\theta + \widehat{\psi}\,,\qquad \widehat{\psi}(\varphi,\theta,t)=\psi_0(\varphi+\omega t,\theta)\,.$$
We now compute
$$\frac{1}{\cos\theta}\, \left[\frac{\partial\Psi}{\partial\varphi}\,\frac{\partial}{\partial\theta}
  - \frac{\partial\Psi}{\partial\theta}\,\frac{\partial}{\partial\varphi}\right]\left(\Delta\Psi + 2\omega\sin\theta\right) = 
  \frac{1}{\cos\theta}\, \left[\frac{\partial\widehat{\psi}}{\partial\varphi}\,\frac{\partial}{\partial\theta}
  - \Big(\omega \cos\theta +\frac{\partial\widehat{\psi}}{\partial\theta} \Big)\,\frac{\partial}{\partial\varphi}\right] \,F(\widehat{\psi}) 
  =-\omega F'(\widehat{\psi})\,\frac{\partial \widehat{\psi}}{\partial \varphi} \,.$$
On the other hand,
$$\frac{\partial}{\partial t}\,\Delta\Psi = \frac{\partial}{\partial t}\,\Big(\Delta\widehat{\psi}- 2\omega \sin\theta\Big)
=\frac{\partial}{\partial t}\, F(\widehat{\psi})= F'(\widehat{\psi})\,\frac{\partial \widehat{\psi}}{\partial t}=\omega F'(\widehat{\psi})\,\frac{\partial \widehat{\psi}}{\partial \varphi}\,,$$
so that $\psi$ solves \eqref{vorteq}.
\end{proof}

\begin{theorem}\label{zf}
Given the vertical density stratification of the stratosphere $\rho_0(z)$, if $\psi_0(\varphi,\theta)$ solves
\begin{equation}\label{li}
\Delta \psi_0 = F(\psi_0)
\end{equation}
for some $F \in C^1({\mathbb R},{\mathbb R})$, then 
\begin{equation}\label{lirot}
\psi(\varphi,\theta,z,t)=\omega \sin\theta + \frac{1}{\sqrt{\rho_0(z)}}\, \psi_0(\varphi +\omega\,t, \theta)
\end{equation}
with the associated pressure 
\begin{equation}\label{lipres}
p_0(\varphi,\theta,z,t)={\mathcal F}(\psi_0(\varphi +\omega\,t, \theta)) - \frac{1}{2}\,\Big( \frac{\partial \psi_0}{\partial \theta}\,(\varphi +\omega\,t, \theta) \Big)^2 
- \frac{1}{2\cos^2\theta}\,\Big( \frac{\partial \psi_0}{\partial \varphi} \,(\varphi +\omega\,t, \theta)\Big)^2 - g \int_0^z \rho_0(s)\,{\rm d}s\,,
\end{equation}
where ${\mathcal F}$ is a primitive of $F$, is a solution of the system \eqref{eel}-\eqref{strg}, describing height-dependent stratospheric planetary flows  
that propagate zonally westwards.
\end{theorem}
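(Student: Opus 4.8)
The plan is to verify directly that the pair \eqref{lirot}--\eqref{lipres} satisfies each equation of the leading-order system \eqref{eel} together with the stream-function relations \eqref{strg}, working at a fixed height $z$ where $\rho_0(z)$ is a constant; I abbreviate $c=\rho_0(z)^{-1/2}$ and write $\psi=\omega\sin\theta+c\,\psi_0(\varphi+\omega t,\theta)$. The incompressibility equation of \eqref{eel} holds identically because $(u_0,v_0)$ is produced from a stream function through \eqref{strg}. The hydrostatic relation of \eqref{eel} is immediate from \eqref{lipres}: its first three terms depend only on $(\varphi,\theta,t)$, so $\partial_z p_0=-g\,\rho_0(z)$ and hence $\rho_0^{-1}\partial_z p_0+g=0$. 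Thus all the content sits in the two horizontal momentum equations of \eqref{eel}.

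First I would recast these in covariant form. With $U_0=J\operatorname{grad}\psi$, the horizontal pair is exactly the $\mathbf{e}_\varphi$- and $\mathbf{e}_\theta$-projection of $\partial_t U_0+\nabla_{U_0}U_0+2\omega\sin\theta\,JU_0+\omega^2\sin\theta\cos\theta\,\mathbf{e}_\theta=-\rho_0^{-1}\operatorname{grad}p_0$. Since $\omega^2\sin\theta\cos\theta\,\mathbf{e}_\theta=\operatorname{grad}(\tfrac12\omega^2\sin^2\theta)$, the centrifugal term is absorbed into the pressure, and at fixed $z$ this is precisely the 2D rotating Euler momentum equation \eqref{eqU} for $U_0$ with effective pressure $\widetilde p:=\rho_0^{-1}p_0+\tfrac12\omega^2\sin^2\theta$. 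Now the scaled profile $c\,\psi_0$ solves the autonomous elliptic equation $\Delta(c\psi_0)=c\,F(\psi_0)$, i.e. $\Delta(c\psi_0)=\widetilde F(c\psi_0)$ with $\widetilde F(s)=c\,F(s/c)$; applying Lemma~\ref{lem} to $c\psi_0$ and $\widetilde F$ shows that $\psi$ solves the vorticity equation \eqref{vorteq}, which is the curl of the horizontal momentum equations. This already guarantees that the left-hand side above is a gradient, so a pressure exists; it remains to identify it and match it with \eqref{lipres}.

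To compute $\widetilde p$ explicitly I would use the Bernoulli identity $\nabla_{U_0}U_0=\operatorname{grad}\tfrac12|U_0|^2+(\Delta\psi)\,JU_0$ together with $JU_0=-\operatorname{grad}\psi$. Setting $q:=\Delta\psi+2\omega\sin\theta=c\,F(\psi_0(\varphi+\omega t,\theta))$, the total-vorticity term becomes $-q\operatorname{grad}\psi=-q\,\omega\cos\theta\,\mathbf{e}_\theta-q\operatorname{grad}(c\psi_0)$. Because $\psi_0$ solves \eqref{li}, the second piece is the exact gradient $\operatorname{grad}\!\big[c^2\mathcal F(\psi_0)\big]$, while the leftover non-gradient piece $\partial_t U_0-q\,\omega\cos\theta\,\mathbf{e}_\theta$ is expected to cancel against the term $\operatorname{grad}\!\big[\omega c\cos\theta\,\partial_\theta\psi_0\big]$ arising when one expands $\operatorname{grad}\tfrac12|U_0|^2$. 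This cancellation is a direct computation using the explicit form of the Laplace--Beltrami operator and the travelling-wave identity $\partial_t\big[\psi_0(\varphi+\omega t,\theta)\big]=\omega\,\partial_\varphi\big[\psi_0(\varphi+\omega t,\theta)\big]$, which I would carry out but not reproduce here. Collecting the surviving gradients yields $\widetilde p=c^2\big[\mathcal F(\psi_0)-\tfrac12|\operatorname{grad}\psi_0|^2\big]$ up to an additive function of $(z,t)$.

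The last step, which I expect to be the main obstacle, is reconciling this with hydrostatic balance: note that the horizontal equations are satisfied for \emph{any} height-dependent amplitude, so they do not by themselves select the exponent in $\rho_0^{-1/2}$. However the hydrostatic relation forces $p_0=-g\int_0^z\rho_0(s)\,{\rm d}s+\Pi(\varphi,\theta,t)$ with $\Pi$ independent of $z$, whereas the previous step gives $\Pi=\rho_0(z)\,\widetilde p=\rho_0(z)\,c(z)^2\big[\mathcal F(\psi_0)-\tfrac12|\operatorname{grad}\psi_0|^2\big]$. For $\Pi$ to be genuinely $z$-independent one needs $\rho_0(z)\,c(z)^2$ constant, and the choice $c=\rho_0^{-1/2}$ is exactly the one making $\rho_0c^2\equiv1$; with it $\Pi=\mathcal F(\psi_0)-\tfrac12|\operatorname{grad}\psi_0|^2$, the dynamic part of \eqref{lipres}. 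The delicate bookkeeping is to keep straight that the velocity carries the amplitude $\rho_0^{-1/2}$ while the horizontal pressure-gradient term carries the prefactor $\rho_0^{-1}$, so that the eddy kinetic energy $\tfrac12\rho_0^{-1}|\operatorname{grad}\psi_0|^2$ matches the $\rho_0^{-1}$-weighted dynamic pressure; this is precisely what pins down the power $1/2$ and closes the verification.
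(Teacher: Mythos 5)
Your proposal is correct, and its backbone coincides with the paper's: both dispose of incompressibility and the hydrostatic relation (\ref{eel}c) immediately, both invoke Lemma~\ref{lem} (applied to the rescaled profile, with the Gauss constraint $\iint_{{\mathbb S}^2}F(\psi_0)\,{\rm d}\sigma=0$ supplied by \eqref{li}) to get the vorticity equation at each fixed height, and both then reduce the horizontal momentum equations to identifying a Bernoulli-type pressure. Where you genuinely differ is in how that last step is organized. The paper computes in coordinates: substituting the Ansatz into (\ref{eel}a)--(\ref{eel}b), all terms linear in $\psi_0$ (time derivative, Coriolis, centrifugal, and the cross terms with the background $\omega\sin\theta$) cancel pointwise, leaving the purely quadratic system \eqref{grad} whose left side is recognized via \eqref{li} as the $(\varphi,\theta)$-gradient of \eqref{bern}, the factor $1/\rho_0$ cancelling on both sides. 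You instead pass to the covariant form, use the decomposition $\nabla_{U_0}U_0=\operatorname{grad}\tfrac12|U_0|^2+(\Delta\psi)\,JU_0$ together with $JU_0=-\operatorname{grad}\psi$, absorb the centrifugal term into a gradient, and compress everything into the single three-term cancellation $\partial_tU_0-q\,\omega\cos\theta\,\mathbf{e}_\theta+\omega c\operatorname{grad}\bigl(\cos\theta\,\partial_\theta\psi_0\bigr)=0$. You defer this identity, but it does hold: after dividing by $\omega c$ the $\mathbf{e}_\varphi$-components cancel by equality of mixed partials, and the $\mathbf{e}_\theta$-component is $\tfrac{1}{\cos\theta}\partial_\varphi^2\psi_0+\cos\theta\,\partial_\theta^2\psi_0-\sin\theta\,\partial_\theta\psi_0-\cos\theta\,\Delta\psi_0=0$ by the coordinate formula for the Laplace--Beltrami operator; it is exactly the covariant repackaging of the paper's cancellation of linear terms. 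Two small points: there is a harmless sign slip where you call $-q\operatorname{grad}(c\psi_0)$ ``the exact gradient $\operatorname{grad}[c^2\mathcal{F}(\psi_0)]$'' --- it equals $-\operatorname{grad}[c^2\mathcal{F}(\psi_0)]$, and your final formula $\widetilde p=c^2\bigl[\mathcal{F}(\psi_0)-\tfrac12|\operatorname{grad}\psi_0|^2\bigr]$ carries the correct sign, matching \eqref{lipres}. More interestingly, your closing observation is a genuine bonus the paper leaves implicit: the horizontal equations alone are satisfied for an arbitrary height-dependent amplitude $c(z)$, producing a dynamic pressure proportional to $\rho_0(z)c(z)^2$, and it is hydrostatic balance --- requiring this prefactor to be $z$-independent --- that selects $c=\rho_0^{-1/2}$; this explains, rather than merely verifies, the exponent in \eqref{lirot}.
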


\begin{proof}
Since
$$\Delta \Big(\frac{1}{\sqrt{\rho_0(z)}}\, \psi_0\Big) =\frac{1}{\sqrt{\rho_0(z)}}\,\Delta \psi_0
=\frac{1}{\sqrt{\rho_0(z)}}\,F(\psi_0)=G\Big(z,\,\frac{1}{\sqrt{\rho_0(z)}}\,\psi_0\Big)\quad\text{with}\quad G(z,s)=\frac{1}{\sqrt{\rho_0(z)}}\, F\big(s\sqrt{\rho_0(z)}\,\big)\,,$$
we infer from Lemma \ref{lem} that $\psi$ defined by \eqref{lirot} solves \eqref{vorteq} for every fixed $z$ since \eqref{li} ensures
$$ \iint_{{\mathbb S}^2} F(\psi_0)\,{\rm d}\sigma=0\,.$$
Using \eqref{strg}, the {\it Ansatz} \eqref{lirot} has the following effect on the equations (\ref{eel}a)-(\ref{eel}b): on the left sides, 
only the quadratic terms in $\psi_0$ remain and the factor $\frac{1}{\rho_0}$ cancels out:
\begin{equation}\label{grad}\begin{cases}
\displaystyle\frac{\partial\psi_0}{\partial\theta} \frac{\partial^2\psi_0}{\partial\theta \partial\varphi}  - 
\frac{\partial\psi_0}{\partial\varphi} \frac{\partial^2\psi_0}{\partial\theta^2} + 
\tan \theta \,\frac{\partial\psi_0}{\partial\varphi}\frac{\partial\psi_0}{\partial\theta} =- \frac{\partial p_0}{\partial\varphi}\,,\\[0.3cm]
-\displaystyle\frac{1}{\cos^2\theta}\,\frac{\partial\psi_0}{\partial\theta} \frac{\partial^2\psi_0}{\partial\varphi^2}  + \frac{1}{\cos^2\theta}\,
\frac{\partial\psi_0}{\partial\varphi} \frac{\partial^2\psi_0}{\partial\theta \partial\varphi} + \frac{\sin\theta}{\cos^3\theta}\,
\Big(\frac{\partial\psi_0}{\partial\varphi}\Big)^2 +\tan\theta\,\Big(\frac{\partial\psi_0}{\partial\theta}\Big)^2 = -\frac{\partial p_0}{\partial\theta}\,.
\end{cases}
\end{equation}
Taking \eqref{li} into account we see that the left side of \eqref{grad} is precisely the gradient of the expression
\begin{equation}\label{bern}
\frac{1}{2}\,\Big( \frac{\partial \psi_0}{\partial \theta} \Big)^2 
+ \frac{1}{2\cos^2\theta}\,\Big( \frac{\partial \psi_0}{\partial \varphi} \Big)^2 - {\mathcal F}(\psi_0) 
\end{equation}
with respect to the $(\varphi,\theta)$-variables, and (\ref{eel}c) is easily integrated to yield \eqref{lipres}.
\end{proof}

It is of interest to investigate the stratospheric temperature distribution associated to the vortices \eqref{lirot}. 
With the temperature normalisation 
\begin{equation}\label{ndt}
T'=\frac{U'^2}{{\frak R}'}\, T\,,
\end{equation}
the equation of state \eqref{ig} takes the nondimensional form
\begin{equation}\label{ign}
p=\rho T\,.
\end{equation}
Consequently, for the realistic density distribution $\rho_0(z)=a \,{\rm e}^{-bz}$, where $a>0$ is the (nondimensional) average density 
of the tropopause and $b > 2$ for the atmospheres of our solar system (see the data in \cite{cat}), from \eqref{lipres} we obtain 
the associated stratospheric temperature at leading order:
\begin{equation}\label{st}
T_0(\varphi,\theta, z,t)= \frac{ag}{b} + {\rm e}^{bz}\,\Big( \frac{1}{a} \,\widehat{p_0}(\varphi,\theta,t) + \frac{g}{b}\Big)\,,
\end{equation}
where $\widehat{p_0}(\varphi,\theta,t)$ is the atmospheric pressure at the tropopause. 
Note that \eqref{st} captures the increase of the stratospheric temperature with height.

\bigskip

\begin{tabular}{ |p{1.25cm}||p{2.5cm}|p{6.5cm}| p{6cm}|}
 \hline
 Planet & ${\frak R}'$  & rounded normalisation factor $U'^2/{\frak R}'$ & stratospheric temperature range  \\
 \hline\hline
 Earth   & 287  m$^2$/(s$^2$K)  & 9 K & 220 K to 260 K \\
Jupiter &  3745  m$^2$/(s$^2$K)  & 6 K & 90 K to 150 K\\
Saturn &  3892  m$^2$/(s$^2$K) & 6 K & 110 K to 170 K\\
Uranus &  3615  m$^2$/(s$^2$K)  & 6 K & 55 K to 115 K\\
Neptune &  3615  m$^2$/(s$^2$K) & 11 K & 55 K to 125 K\\
 \hline
\end{tabular}

\bigskip

\begin{remark}
(i) Using the kinematic equations for the material derivative (see \cite{holton})
\begin{equation}\label{ke3d}
u'= \frac{{\rm D}\varphi}{{\rm D}t'}\,r'\,\cos\theta\,,\qquad v'=\frac{{\rm D}\theta}{{\rm D}t'}\,r'\,,\qquad w'=\frac{{\rm D}r'}{{\rm D}t'}\,,
\end{equation}
the Euler equation \eqref{eq:euler} and the equation of mass conservation \eqref{eq:mass-conv} lead to the axial angular momentum conservation law (see \cite{whrs})
\begin{equation}\label{am}
\rho'\, \frac{{\rm D}\ }{{\rm D}t'}\,\Big\{ \big(u'+ \varOmega' r'\cos\theta \big)r'\cos\theta \Big\} = - \frac{\partial p'}{\partial \varphi} \,.
\end{equation}
The kinematic equations for the material derivative of a flow on the unit sphere are (see \cite{cj2})
\begin{equation}\label{kes}
u_0=\cos\theta\, \frac{D}{Dt}\,\varphi\,,\qquad v_0= \frac{D}{Dt}\,\theta\,,
\end{equation}
where
$$\frac{D}{Dt}=\frac{\partial}{\partial t}+ \frac{u_0}{\cos\theta}\frac{\partial }{\partial \varphi} + 
v_0\,\frac{\partial}{\partial \theta}\,.$$
The equations \eqref{kes} are precisely the non-dimensional version of equation 
\eqref{ke3d} with $r' \equiv 1$ (and $w' \equiv 0$). One can now see that (\ref{eel}a) is precisely 
the non-dimensional form of \eqref{am} for flow on a sphere.

(ii) The motion of individual particles of the flow associated to \eqref{lirot} occurs on a sphere determined by the 
initial location, and its evolution is therefore determined by the spherical coordinates $(\varphi(t),\,\theta(t))$. Setting
\begin{equation}\label{Phi}
\Phi(t)= \varphi(t)+ \omega t\,,
\end{equation}
from \eqref{lirot}, \eqref{strg} and \eqref{kes} we get
\begin{align*}
\frac{{\rm d}\ }{{\rm d}t}\, \psi_0(\Phi(t),\theta(t)) &= 
\frac{1}{\sqrt{\rho(z)}}\,\Big\{ \frac{\partial \psi_0}{\partial \varphi}\,\frac{{\rm d}\Phi}{{\rm d}t} 
+ \frac{\partial \psi_0}{\partial \theta}\,\frac{{\rm d}\theta}{{\rm d}t} \Big\} 
= \frac{1}{\sqrt{\rho(z)}}\,\Big\{ \frac{\partial \psi_0}{\partial \varphi}\,\Big(\frac{D_\Sigma\varphi}{Dt} + \omega\Big) + \frac{\partial \psi_0}{\partial \theta}\,\frac{D_\Sigma\theta}{Dt} \Big\} \\
&=\frac{1}{\sqrt{\rho(z)}}\,\Big\{ \frac{\partial \psi_0}{\partial \varphi}\,\Big(\frac{u_0}{\cos\theta} + \omega\Big) + \frac{\partial \psi_0}{\partial \theta}\,v \Big\} \\
&=\frac{1}{\sqrt{\rho(z)}}\,\Big\{ \frac{\partial \psi_0}{\partial \varphi}\,\Big(-\frac{\partial \psi_0}{\partial \theta}\,\frac{1}{\sqrt{\rho(z)}\,\cos\theta}\Big) + \frac{\partial \psi_0}{\partial \theta}\,\Big(\frac{\partial \psi_0}{\partial \varphi}\,\frac{1}{\sqrt{\rho(z)}\,\cos\theta}\Big) \Big\}=0\,.
\end{align*}
Consequently the flow occurs along the level sets of $\psi_0$, translated westward at the speed of rotation of the planet.  
Westward moving persistent flow patterns that are nearly stationary in the rotating frame of reference 
were observed in Saturn's stratosphere at 40$^\circ$, 55$^\circ$ and 70$^\circ$ N and S, these flows being remarkably symmetric 
about Saturn's equator (see \cite{greg}). Similar coherent high-latitude bands of westward flows in Jupiter's stratosphere, persisting 
for 70 days, were captured in 2000 during the Cassini mission (see \cite{kaspi} for data and Figure 2 for a visualisation). 
Terrestrial patterns of a similar nature also occur but are rather rare events, e.g., major stratospheric warmings 
may disrupt the eastward polar vortex and give rise to westward winds lasting typically a few days (see \cite{holton}); these 
attain the planet's speed of rotation at high latitudes. Thus the flow induced by \eqref{lirot} captures physically realistic patterns in 
suitable latitude bands -- alternating eastward and westward traveling belts being typical for Jupiter and Saturn. On the other hand, the stratospheric flow for Uranus and Neptune 
is highly zonal, featuring a broad retrograde equatorial jet and high-latitude prograde jets (see Fig. \ref{fig2}). Geostationary flow patterns lasting for decades 
occur at about 20$^\circ$ latitude on Uranus, while on Neptune they can be observed near  50$^\circ$ latitude but appear to be rather short-lived (see the data in \cite{ga}).

(iii) The effect of replacing \eqref{li} by
$$\Delta \psi_0 = F(\psi_0) - \frac{1}{4\pi}\,\iint_{{\mathbb S}^2} F(\psi_0)\,{\rm d}\sigma$$
(so that the Gauss contraint \eqref{gaussc} is satisfied) brings about the additive correction term
$$\frac{\psi_0(\varphi+\omega t)}{4\pi\sqrt{\rho_0(z)}}\,\iint_{{\mathbb S}^2} F(\psi_0)\,{\rm d}\sigma$$
in \eqref{bern} and the compatibility of the horizontal gradient with (\ref{eel}c) is not granted unless we allow for a forcing term as a perturbation of gravity acting in the radial direction. This feature 
is replicated if we start with \eqref{ellipticpsi} rather than \eqref{li}. Thus an interesting direction for further investigations is opened up since such forcing terms appear naturally 
if one accounts for oblateness: rapidly rotating planets deviate from a perfect sphere by flattening at the poles and bulging at the Equator (see \cite{cj2} for further details in the terrestrial setting).\qed
\end{remark}

\section*{Appendix: Spherical harmonics}
\setcounter{equation}{0}
\renewcommand\theequation{A.\arabic{equation}}

We collect some properties of spherical harmonics relied upon throughout the paper.

The eigenvalues of the Laplace-Beltrami operator on the unit sphere $\Sigma$, acting on functions with vanishing spherical average\footnote{Without this restriction, zero would be an eigenvalue with constant eigenvectors.} are $ \{-j(j+1),\ j \in {\mathbb N} \}$.  
We will denote by $\mathbb{E}_j$ the $j$-th eigenspace, of dimension $2j + 1$, associated to the eigenvalue $-j(j+1)$, and by $\mathbb{P}_j$ the corresponding spectral projector. 
A basis of $\mathbb{E}_j$ is provided by the $(2j+1)$ spherical harmonics\footnote{While in geophysics it convenient to use the latitude 
$\theta \in [-\tfrac{\pi}{2},\tfrac{\pi}{2}]$, in quantum mechanics one typically uses instead the co-latitude or polar 
angle $\Theta=\tfrac{\pi}{2}-\theta \in [0,\pi]$. Passing from one set of coordinates to the other requires only an interchange of $\sin$ and $\cos$ in all 
explicit expressions and to keep track of the range of the corresponding angles.}
$$
Y_{j}^m (\varphi,\theta)=(-1)^m \sqrt{\tfrac{(2j+1)(j-m)!}{4\pi(j+m)!}} \,P_{j}^m (\sin \theta) {\rm e}^{{\rm i} m \varphi}\,,\qquad m=-j,\dots,j\,,
$$
of degree $j$ and zonal number $m$ ($-j \le m \le j$), where 
$$P_{j}^m(x)=\frac{1}{2^j j!}\,(1-x^2)^{m/2}\, \frac{{\rm d}^{j+m}}{{\rm d}^{j+m}x}\,(x^2-1)^j \,,\qquad m=-j,\dots,j\,,$$ 
are the associated Legendre polynomials, satisfying (see \cite{mull})
\begin{equation}\label{a-mm}
Y_{j}^{-m}=(-1)^m \,\overline{Y_{j}^{m}} \,,\qquad m=-j,\dots,j\,,
\end{equation} 
where the overline means complex conjugation. The only zonal spherical harmonics of degree $j$ are $Y_j^0$, 
$Y_j^{\pm j}$ are called sectoral and change only in the longitudinal direction, while for $1 \le |m| \le l-1$ the spherical harmonics $Y_j^m$ 
are called tesseral and vary in both the longitudinal and latitudinal directions. A real orthonormal basis of spherical harmonics 
$\{ R_j^m\}$ can be defined in terms of their complex analogues by setting
$$
R_j^m=\begin{cases}
\frac{{\rm i}[Y_j^m - (-1)^m Y_j^{-m}]}{\sqrt{2}}=(-1)^m \sqrt{\tfrac{(2j+1)(j-|m|)!}{2\pi(j+|m|)!}} \,P_{j}^{|m|} (\sin \theta) \sin(| m| \varphi)\,,\quad & m<0\,,\\[0.2cm]
Y_j^0=\sqrt{\tfrac{(2j+1)}{4\pi}}\,P_{j}^{0} (\sin \theta)\,,\quad &  m=0\,,\\[0.2cm]
\frac{Y_j^m + (-1)^m Y_j^{-m}}{\sqrt{2}}=(-1)^m  \sqrt{\tfrac{(2j+1)(j-|m|)!}{2\pi(j+m)!}} \,P_{j}^{|m|} (\sin \theta) \cos( m\varphi)\,,\quad & m>0\,.
\end{cases}$$
The first eigenspaces are
\begin{itemize}
\item $\mathbb{E}_1$, which admits the orthonormal complex basis (with respect to the spherical surface element ${\rm d}\sigma=\cos\theta\,{\rm d}\theta {\rm d}\varphi$)
$$Y_1^{-1}(\varphi,\theta)=\tfrac{1}{2}\sqrt{\tfrac{3}{2\pi}}\,\cos\theta\,{\rm e}^{-{\rm i} \varphi} \,,\qquad Y_1^0(\theta)=\tfrac{1}{2}\sqrt{\tfrac{3}{\pi}}\,\sin\theta\,,\qquad Y_1^1(\varphi,\theta)=-\tfrac{1}{2}\sqrt{\tfrac{3}{2\pi}}\,\cos\theta\,{\rm e}^{{\rm i} \varphi}\,,$$
with the corresponding orthonormal real basis
$$R_1^{-1}(\varphi,\theta)=\tfrac{1}{2}\sqrt{\tfrac{3}{\pi}}\,\cos\theta\,\sin \varphi \,,\qquad R_1^0(\theta)=\tfrac{1}{2}\sqrt{\tfrac{3}{\pi}}\,\sin\theta\,,\qquad R_1^1(\varphi,\theta)=-\tfrac{1}{2}\sqrt{\tfrac{3}{\pi}}\,\cos\theta\,\cos \varphi\,;$$
\item $\mathbb{E}_2$, which admits the orthonormal complex basis
$$Y_2^{\pm1}(\varphi,\theta)=\mp\tfrac{1}{2}\sqrt{\tfrac{15}{2\pi}}\,\sin\theta \cos\theta\,{\rm e}^{\pm {\rm i} \varphi} \,,\qquad Y_2^0(\theta)=\tfrac{1}{4}\sqrt{\tfrac{5}{\pi}}\,(3\sin^2\theta-1)\,,
\qquad Y_2^{\pm 2}(\varphi,\theta)=\tfrac{1}{4}\sqrt{\tfrac{15}{2\pi}}\,\cos^2\theta\,{\rm e}^{\pm 2{\rm i} \varphi}\,,$$
with the corresponding orthonormal real basis
\begin{align*}
&R_2^{-2}(\varphi,\theta)=\tfrac{1}{4}\sqrt{\tfrac{15}{\pi}}\,\cos^2\theta\,\sin(2\varphi)\,,\qquad &R_2^{-1}(\varphi,\theta) =\tfrac{1}{2}\sqrt{\tfrac{15}{\pi}}\,\sin\theta \cos\theta\,\sin \varphi \,,\qquad &R_2^0(\theta)=\tfrac{1}{4}\sqrt{\tfrac{5}{\pi}}\,(3\sin^2\theta-1)\,,\\
&R_2^1(\varphi,\theta) =-\tfrac{1}{2}\sqrt{\tfrac{15}{\pi}}\,\sin\theta \cos\theta\,\cos \varphi \,,\qquad 
&R_2^2(\varphi,\theta)=\tfrac{1}{4}\sqrt{\tfrac{15}{\pi}}\,\cos^2\theta\,\cos(2\varphi)\,.\qquad&
\end{align*}
\end{itemize}
The only spherical harmonics with modes $j \ge 3$ that we refer to in this paper are
$$Y_3^0(\theta)=\tfrac{1}{4}\sqrt{\tfrac{7}{\pi}}\,(5\sin^3\theta-3\sin\theta) \quad\text{and}\quad Y_5^0(\theta)=\tfrac{1}{16}\sqrt{\tfrac{11}{\pi}}\,(63\sin^5\theta- 705\sin^3\theta +15\sin\theta)\,.$$
Generally we have
$$Y_j^0(\theta)=\frac{1}{2^{j+1}j!} \sqrt{\frac{2j+1}{\pi j!}}\,P_j^0(\sin\theta)\,,$$
with $P_j^0(-x)=(-1)^j P_j^0(x)$ on $(-1,1)$ and $P_j^0(1)=1$, while $P_j^0(0)=0$ for $j$ odd and $P_j(0) = \frac{(-1)^{j/2} j!}{2^j (j/2)!}$ for $j$ even. 
For $j \ge 1$ the Legendre polynomial $P_j^0$, of degree $j$, has $j$ distinct simple roots and $j-1$ local 
minima and maxima in the interval $(-1,1)$, while 
$\pm 1$ are global extrema in $[-1,1]$ with $(P_j^0)'(1)=\frac{j(j+1)}{2}$. While a general formula for the roots $s_k$ of $P_j$ in $(-1,1)$ is, to 
the best of our knowledge, still elusive, their location is quite accurately described by 
$$s_k \approx \cos \Big( \frac{(4k-1)\pi}{4j+2}\Big)\,,\qquad k=1,\dots,j\,.$$
The fact that  for $j \ge 1$ the zonal spherical harmonics $R_j^0$ has exactly $j$ nodal domains (connected components of the complement of the set of zeros) on the sphere $\Sigma$ is generally not replicated 
by the real spherical harmonics $R_j^m$ with $j \ge 2$ and $m \neq 0$, each of the three real spherical harmonics of degree one having 
two nodal domains (see \cite{ley}).

Representation theory highlights the relevance of symmetries in the study of spherical harmonics. An element ${\frak g}$ of the 
group $SO(3)$ of the rotations of the sphere $\Sigma$ can be parametrized by the Euler angles as
$${\frak g}(\alpha,\beta,\gamma)={\frak R}_z(\gamma){\frak R}_y(\beta) {\frak R}_z(\alpha)\,,$$
where ${\frak R}_z(\gamma)$ and ${\frak R}_y(\beta)$ represent a rotation around the $z$-axis by $\gamma$ radians and 
a rotation around the $y$-axis by $\beta$ radians. To any ${\frak g} \in SO(3)$ we can associate a rotation $\Lambda_{\frak g}$ 
on $L^2(\Sigma)$, defined by $(\Lambda_{\frak g}f)(\eta)=f({\frak g}^{-1}\eta)$. The mapping ${\frak g} \mapsto \Lambda_{\frak g}$ 
is a unitary representation of $SO(3)$, and restricting $\Lambda_{\frak g}$ to the finite-dimensional space 
of spherical harmonics of degree $j  \ge 0$, consisting of the linear combination of the spherical harmonics of degree $j$, one 
obtains all the irreducible representations of $SO(3)$, in the sense that there are no genuine invariant subspaces (see \cite{s}). 
A rotated spherical harmonic of degree $j$ can be written as a linear combination of spherical harmonics of degree $j$ by means of 
the formula
$$\Lambda_{\frak g} Y_j^m = \sum_{|k| \le j} {\mathbb U}_l^{mk}({\frak g}) Y_l^k\,,$$
where
$${\mathbb U}_l^{mk}({\frak g}(\alpha,\beta,\gamma))={\rm e}^{-{\rm i} (m\gamma+ k\alpha)}\, P_l^{mk}(\sin\beta)\,,$$
with $P_j^{mk}$ being the generalized associated Legendre polynomials, given for $m,\, k \in \{-j,\dots,j\}$ by (see \cite{vil})
$$P_{j}^{mk}(x)=\frac{(-1)^{j-m}}{2^j}\,\sqrt{\frac{(j+m)!}{(j-k)!(j+k)!(j-m)!}}
(1+x)^{-(m+k)/2}\, (1-x)^{(k-m)/2}\,\frac{{\rm d}^{j-m}}{{\rm d}^{j-m}x}\,[(1-x)^{j-k}(1+x)^{j+k}]\,.$$
Note that the spherical harmonics of degree one can be obtained one from any another by a rotation. 

Expanding a stream function $\psi \in H^2({\mathbb S}^2)$ in spherical harmonics 
$$\psi(\varphi,\theta)= \sum_{j \ge 1}  \sum_{m=-j}^j  \alpha_j^m(t)  Y_j^m(\varphi,\theta) \,,$$
the associated velocity $U$ and vorticity $\Omega$ are represented by
\begin{align*}
U &= \Big(- \sum_{j \ge 1}  \sum_{m=-j}^j \alpha_j^m(t)  \frac{\partial Y_j^m}{\partial \theta}\,,\ \frac{1}{\cos\theta} \sum_{l \ge 1}  \sum_{k=-l}^l 
\alpha_l^k(t)  \frac{\partial Y_l^k}{\partial \varphi}\Big)\,,\\
\Omega &= - \sum_{j \ge 1}  \sum_{m=-j}^j j(j+1) \,\alpha_j^m(t)  Y_j^m(\varphi,\theta)\,.
\end{align*}
Since the surface gradients of spherical harmonics are also orthogonal, we have (see \cite{khol})
$$
\iint_{{\mathbb S}^2} |U|^2 \,d\sigma =\sum_{j \ge 1}  \sum_{m=-j}^j j(j+1)\,|\alpha_j^m(t)|^2  \,,\qquad
\iint_{{\mathbb S}^2} |\Omega|^2 \,d\sigma =\sum_{j \ge 1}  \sum_{m=-j}^j j^2(j+1)^2\,|\alpha_j^m(t)|^2\,.$$
As a consequence one infers the validity of the sharp Poincar\'e inequality 
\begin{equation}\label{poin}
\iint_{{\mathbb S}^2} |\Omega|^2 \,d\sigma \ge (n+1)(n+2) \iint_{{\mathbb S}^2} |U|^2 \,d\sigma\,,\qquad 
\psi \in H^2({\mathbb S}^2) \cap \Big(\bigcap\limits_{j=1}^n {\mathbb E}_j^\perp\Big)
\end{equation}
where ${\mathbb E}_j^\perp$ is the orthogonal complement in $L^2({\mathbb S}^2)$ of the $(2j+1)$-dimensional eigenspace ${\mathbb E}_j$  
of the eigenvalue $-j(j+1)$ of the Laplace-Beltrami operator.


\end{document}